\author{Harm Derksen\footnote{The research of the
first author was supported by NSF grant DMS 0349019.}{\ } and Gregor Kemper}
\title{Computing Invariants of Algebraic Group Actions in Arbitrary Characteristic}
\date{April 11, 2007}
\begin{document}

\maketitle

\begin{abstract}
  Let $G$ be an affine algebraic group acting on an affine variety
  $X$. We present an algorithm for computing generators of the
  invariant ring $K[X]^G$ in the case where $G$ is reductive.
  Furthermore, we address the case where $G$ is connected and
  unipotent, so the invariant ring need not be finitely generated. For
  this case, we develop an algorithm which computes $K[X]^G$ in terms
  of a so-called colon-operation. From this, generators of $K[X]^G$
  can be obtained in finite time if it is finitely generated. Under
  the additional hypothesis that $K[X]$ is factorial, we present an
  algorithm that finds a quasi-affine variety whose coordinate ring is
  $K[X]^G$.  Along the way, we develop some techniques for dealing
  with non-finitely generated algebras. In particular, we introduce
  the finite generation locus ideal.
\end{abstract}

%\tableofcontents

\section*{Introduction} \label{0sIntro}

Throughout this article, $G$ will be an affine algebraic group over an
algebraically closed field $K$. By a $G$-variety we understand an
affine variety $X$ over $K$ with a $G$-action given by a morphism $G
\times X \to X$. The ring of regular functions on $X$ is denoted by
$K[X]$. $G$ acts on $K[X]$ by
\[
\sigma(f) = f \circ \sigma^{-1}
\]
for $\sigma \in G$ and $f \in K[X]$. The invariant ring is
\[
K[X]^G := \{f \in K[X] \mid \sigma(f) = f \ \text{for all} \ \sigma
\in G\}.
\]
\mycite{nag:d} showed that $K[X]^G$ is finitely generated as a
$K$-algebra if $G$ is reductive, i.e., the trivial group is the only
connected, unipotent, normal subgroup of $G$. On the other hand,
\mycite{pop} showed that if $G$ is not reductive, then there exists a
$G$-variety $X$ such that $K[X]^G$ is not finitely generated.
Moreover, \mycite{nag:b} showed that if $X$ is normal, then $K[X]^G$
is always isomorphic to the coordinate ring $K[U]$ of a quasi-affine
variety $U$ over $K$, even if $K[X]^G$ is not finitely generated.
Several problems arise from these facts:
\begin{enumerate}
  \renewcommand{\theenumi}{\arabic{enumi}}
\item Find an algorithm that constructs generators of $K[X]^G$ for $G$
  reductive.
\item Find an algorithm that decides whether $K[X]^G$ is finitely
  generated for $G$ non-reductive.
\item Find an algorithm that constructs generators of $K[X]^G$ if it
  is finitely generated.
\item Find an algorithm that constructs a quasi-affine variety $U$
  with $K[X]^G \cong K[U]$ (in the case that $X$ is normal).
\end{enumerate}

In the case that $K$ has characteristic~0, a solution for the first
problem was given by the first author~[\citenumber{Derksen:99}]. (More
precisely, the article [\citenumber{Derksen:99}] deals with the case
that $G$ is linearly reductive.)  The second author gave a solution of
the first problem in the case that $X = \mathbb{A}^n(K)$ is affine
$n$-space and the action of $G$ is
linear~[\citenumber{kem.separating}]. The third problem was solved by
\mycite{essen} for $G = \Ga$ being the additive group and $K$ being of
characteristic~0 (see \sref{3lMu} of this paper). Van den Essen's
algorithm terminates after finitely many steps if and only if
$K[X]^{\Ga}$ is finitely generated.

In the first and last section of this paper, we do the following:
\begin{itemize}
\item We give a complete solution to the first problem
  (\aref{2aInvarAff}). An optimized algorithm is given for the case
  that $X$ is normal and $G$ is connected (\aref{3aNormal}).
\item We give a new algorithm for computing $K[X]^G$ in the case that
  $G = \Ga$ is the additive group and $X$ is irreducible (see
  \sref{3sGap}). This algorithm works in arbitrary characteristic. As
  Van den Essen's algorithm, our algorithm first finds an $f \in
  K[X]^{\Ga} \setminus \{0\}$ and finitely many generators of the
  localization $K[X]^{\Ga}_f$. This is used for computing generators
  of $K[X]^{\Ga}$ in a second step. If the invariant ring is not
  finitely generated, this second step continues to produce generating
  invariants forever.
\item We extend the algorithm for additive group invariants to the
  case where $G$ is connected and unipotent, and $X$ is irreducible
  (\aref{7aunipotent}). The algorithm has the same nature as the one
  for the additive group. Thus we get a solution of the third problem
  for this case.
\item We find an algorithm for constructing a quasi-affine variety $U$
  with $K[X]^G \linebreak \cong K[U]$ in the case that $G$ is
  connected and unipotent, and $K[X]$ is factorial
  (\aref{7afactorial}). The isomorphism is given explicitly. This
  algorithm always terminates after finitely many steps. Thus we solve
  the fourth problem for this case.
\item We develop some ideas how the third problem can be attacked in
  general (\sref{3sAlg}).
\end{itemize}
We leave it to others to make any progress on the second problem. The
middle section of this paper deals with non-finitely generated
algebras. In the context of this paper, this prepares the ground for
the last section, but we believe that the following results from the
middle section are of more general interest:
\begin{itemize}
\item We introduce ``colon-operations'' $(R:{\mathfrak a})_S$ and
  $(R:{\mathfrak a}^\infty)_S$ and give algorithms for computing them
  in the case that $R \subseteq S$ are finitely generated algebras
  over a field and $\mathfrak a$ is an ideal of $R$ (see
  \sref{2sColon}). The coordinate ring of an irreducible, quasi-affine
  variety appears as a special case (see \lref{2lQuasiAffine}).
\item We prove that for a subalgebra $R$ of a finitely generated
  domain over a field, there always exists $f \in R \setminus \{0\}$
  such that $R_f$ is finitely generated (\pref{6ploc}). We also prove
  that the set of all these $f$'s, together with~0, forms an ideal,
  the \df{finite generation locus ideal}.
\item We give a constructive version of Grothendieck's generic
  freeness lemma (see \tref{propgenfree} and \aref{5agenfree}).
\item We give an algorithm for computing the intersection of a
  finitely generated domain over a field and the field of fractions of
  a subalgebra (\aref{5aHilb14}). This algorithm addresses the
  original version of Hilbert's fourteenth problem. Our algorithm
  terminates after finitely many steps if and only if the intersection
  is finitely generated.
\end{itemize}

\head{Acknowledgments} This work was initiated during a visit of the
second author to the University of Michigan. The second author thanks
the first one for his hospitality. Both authors thank Tobias Kamke for
carefully reading the manuscript and pointing out some errors to us.
We also thank Frank Grosshans for sending us his nice
paper~[\citenumber{Grosshans2007}] and thereby bringing a result of
\mycite{Kallen93} to our attention.

\section{Invariants of reductive groups} \label{1sReductive}

In this section we give algorithms for computing invariant rings of
reductive groups acting on affine varieties. The assumption on
reductivity of $G$ is not needed in \sref{1sEmbed}.

\subsection{Embedding into a linear space} \label{1sEmbed}

If $X = \mathbb{A}^n(K)$ is affine $n$-space and the action is linear,
we say that $X$ is a $G$-module. We usually use letters like $V$ or
$W$ for $G$-modules. A $G$-module is given by a morphism $G \to
\GL_n(K)$ of algebraic groups.

Our first goal is to embed an arbitrary $G$-variety $X$ equivariantly
into a $G$-module $V$. The idea for this is simple and standard. Since
the $G$-action on $K[X]$ is locally finite, there exists a
finite-dimensional $G$-stable vector space $W \subseteq K[X]$ which
generates $K[X]$ as a $K$-algebra. So we obtain a $G$-equivariant
epimorphism from the symmetric algebra $S(W)$ onto $K[X]$. Since $S(W)
= K[W^*]$, $V = W^*$ (the dual of $W$) is the desired $G$-module.
However, for turning this rough idea into an algorithm, we have to
work out quite a few details.

Before we can even start to formulate algorithms, we need to specify
the form of the input data.

\begin{con} \label{1cInput}
  We assume that $G$ and $X$ are given by the following data:
  \begin{enumerate}
  \item generators of a radical ideal $J \subset K[t_1 \upto t_m]$ in
    a polynomial ring such that $J$ defines $G$ as an affine variety
    in $K^m$;
  \item generators of a radical ideal $I \subseteq K[x_1 \upto x_n]$
    in another polynomial ring such that $I$ defines $X$ as an affine
    variety in $K^n$;
  \item polynomials $g_1 \upto g_n \in K[t_1 \upto t_m,x_1 \upto x_n]$
    such that for a point $(\xi_1 \upto \xi_n) \in X$ and a group
    element $\sigma = (\gamma_1 \upto \gamma_m) \in G$ we have
    \[
    \sigma(\xi_1 \upto \xi_n) =
    \left(g_1(\underline{\gamma},\underline{\xi}) \upto
      g_n(\underline{\gamma},\underline{\xi})\right),
    \]
    where we write $(\underline{\gamma})$ for $(\gamma_1 \upto
    \gamma_m)$ etc.
  \end{enumerate}
\end{con}

We are now ready to formulate our first algorithm.

\begin{alg}[Embedding $X$ into a $G$-module $V$] \label{1aEmbed}
  \mbox{}
  \begin{description}
  \item[Input:] An affine algebraic group $G$ and a $G$-variety $X$ given
    according to \conref{1cInput}.
  \item[Output:] Polynomials $a_{i,j} \in K[t_1 \upto t_n]$ ($i,j \in
    \{1 \upto r\}$) such that
    \[
    G \to \GL_r(K), \ (\gamma_1 \upto \gamma_m) \mapsto
    \begin{pmatrix}
      a_{1,1}(\underline{\gamma}) & \cdots &
      a_{1,r}(\underline{\gamma}) \\
      \vdots & & \vdots \\
      a_{r,1}(\underline{\gamma}) & \cdots &
      a_{r,r}(\underline{\gamma})
    \end{pmatrix}
    \]
    defines a $G$-module $V = K^r$, and polynomials $h_1 \upto h_r \in
    K[x_1 \upto x_n]$ such that
    \[
    X \to V, \ (\xi_1 \upto \xi_n) \mapsto \left(h_1(\underline{\xi})
      \upto h_r(\underline{\xi})\right)
    \]
    is $G$-equivariant and injective.
  \end{description}
  \begin{enumerate}
    \renewcommand{\theenumi}{\arabic{enumi}}
  \item \label{s11} Compute Gr\"obner bases $\mathcal{G}_I$ and
    $\mathcal{G}_J$ of $I$ and $J$ with respect to arbitrary monomial
    orderings on $K[x_1 \upto x_n]$ and $K[t_1 \upto t_m]$.
  \item \label{s12} Substitute each $g_i$ by its normal form
    $\NF_{\mathcal{G}_I \cup {\mathcal G}_J}(g_i)$. (This means that
    whenever a monomial of $g_i$ is divisible by a leading monomial of
    an element of $\mathcal{G}_I$ or $\mathcal{G}_J$, the
    corresponding reduction should be performed.)
  \item \label{s13} Let $C \subseteq K[x_1 \upto x_n]$ be the set of
    all coefficients occurring in the $g_i$ considered as polynomials
    in $t_1 \upto t_m$.
  \item \label{s14} Select a maximal $K$-linearly independent subset
    $\{h_1 \upto h_r\} \subseteq C$.
  \item \label{s15} For $i = 1 \upto r$, form
    \[
    \tilde{h_i} := \NF_{\mathcal{G}_I \cup {\mathcal
    G}_J}\left(h_i(g_1 \upto g_n)\right) \in K[t_1 \upto t_m,x_1 \upto
    x_n].
    \]
  \item \label{s16} For $i = 1 \upto r$, find $a_{i,1} \upto a_{i,r}
    \in K[t_1 \upto t_m]$ such that
    \begin{equation} \label{1eqAij}
      \tilde{h_i} = \sum_{j=1}^r a_{i,j} h_j.
    \end{equation}
    This can be done by viewing~\eqref{1eqAij} as an equation in
    \linebreak $K(t_1 \upto t_m)[x_1 \upto x_n]$, comparing
    coefficients in the $x$-variables, and solving the resulting
    linear system with coefficients in $K(t_1 \upto t_m)$. In fact,
    there exists a unique solution, which lies in $K[t_1 \upto
    t_m]^r$.
  \end{enumerate}
\end{alg}

\begin{proof}[Proof of correctness of \aref{1aEmbed}]
  We first remark that converting the $g_i$ into normal form
  (Step~\ref{s12}) does not change their properties given in
  \conref{1cInput}(c). We will assume that $g_i$ are in normal form.

  Throughout the proof let $\sigma = (\gamma_1 \upto \gamma_m)$ and
  $\tau = (\eta_1 \upto \eta_m)$ be elements from $G$, and write
  $\sigma \tau = (\zeta_1 \upto \zeta_m)$ for their product. For
  $(\xi_1 \upto \xi_n) \in X$ we have
  \[
  \left(\sigma^{-1}(x_i + I)\right)(\xi_1 \upto \xi_n) = (x_i +
  I)\left(\sigma(\xi_1 \upto \xi_n)\right) =
  g_i(\underline{\gamma},\underline{\xi}),
  \]
  so
  \begin{equation} \label{1eqSigma}
    \sigma^{-1}(x_i + I) = g_i(\underline{\gamma},\underline{x}) + I.
  \end{equation}
  We can write
  \[
  g_i = \sum_{j=1}^l h_{i,j} f_j
  \]
  with $f_1 \upto f_l \in K[t_1 \upto t_m]$ pairwise distinct
  monomials in normal form w.r.t. $\mathcal{G}_J$ and $h_{i,j} \in
  K[x_1 \upto x_n]$ in normal form w.r.t. $\mathcal{G}_I$. With this,
  \eqref{1eqSigma} becomes
  \begin{equation} \label{1eqFH}
    \sum_{j=1}^l f_j(\underline{\gamma}) (h_{i,j} + I) =
    \sigma^{-1}(x_i + I).
  \end{equation}
  Let
  \[
  W := \sum_{i=1}^n \sum_{j=1}^l K \cdot (h_{i,j} + I)
  \]
  be the subspace of $K[X]$ generated by the residue classes of all
  $h_{i,j}$. With the $h_i$ selected as in Step~\ref{s14}, a $K$-basis
  of $W$ is given by $h_1 + I \upto h_r + I$. From~\eqref{1eqFH} with
  $\sigma$ being the identity element, we see that $x_i + I \in W$ for
  all~$i$, so $K[X]$ is generated by $h_1 + I \upto h_r + I$ as a
  $K$-algebra. This implies that the map $X \to K^r = V$ given by the
  $h_i$ is injective.
  
  Applying $\tau^{-1}$ to~\eqref{1eqFH} and then
  applying~\eqref{1eqFH} with $\sigma \tau$ in the place of $\sigma$
  yields
  \[
  \sum_{j=1}^l f_j(\underline{\gamma}) \cdot \tau^{-1}(h_{i,j} + I) =
  \tau^{-1}\left(\sigma^{-1}(x_i + I)\right) = \sum_{j=1}^l
  f_j(\underline{\zeta}) (h_{i,j} + I) \in W.
  \]
  Since the $f_j$ are linearly independent as functions on $G$, this
  shows that all $\tau^{-1}(h_{i,j} + I)$ lie in $W$, so $W$ is
  $G$-stable. To see that the $a_{i,j}$ from Step~\ref{s16} exist,
  choose a set $B \subseteq K[x_1 \upto x_n]$ such that the $h + I$
  with $h \in B$ together with all $h_i + I$ form a $K$-basis of
  $K[X]$. Then for $i \in \{1 \upto n\}$ we can write
  \[
  \tilde{h_i} + I = \sum_{j=1}^r a_{i,j} h_j + \sum_{j=1}^s
  a_{i,j}' h_j' + I
  \]
  with $h_j' \in B$ and $a_{i,j},a_{i,j}' \in K[t_1 \upto t_m]$. As
  $\tilde{h_i}$ is in reduced form w.r.t. $\mathcal{G}_J$, the same
  holds for all $a_{i,j}$ and $a_{i,j}'$. The definition of
  $\tilde{h_i}$, Equation~\eqref{1eqSigma} and the $G$-stability of
  $W$ imply
  \[
  \tilde{h_i}(\gamma_1 \upto \gamma_m,x_1 \upto x_n) + I =
  \sigma^{-1}(h_i + I) \in W,
  \]
  so
  \[
  \sum_{j=1}^r a_{i,j}(\underline{\gamma}) h_j + \sum_{j=1}^s
  a_{i,j}'(\underline{\gamma}) h_j' + I \in W
  \]
  for all $\sigma = (\gamma_1 \upto \gamma_m) \in G$. Since $W$ is
  generated by the $h_i + I$, it follows that all
  $a_{i,j}'(\underline{\gamma})$ are zero, so $a_{i,j}' \in J$. Since
  they are in normal form, $a_{i,j}' = 0$ for all~$j$, so $\tilde{h_i}
  + I = \sum_{j=1}^r a_{i,j} h_j + I$. Since all polynomials in this
  equation are in reduced form w.r.t. $\mathcal{G}_I$, it follows that
  this is an equality in $K[t_1 \upto t_m,x_1 \upto x_n]$. So the
  $a_{i,j}$ from Step~\ref{s16} indeed exist. Their uniqueness follows
  from the fact that $h_1 \upto h_r$ are linearly independent over
  $K$, thus also over the rational function field $K(t_1 \upto t_m)$.
  
  Next we show that the $a_{i,j}$ define a $G$-module $V =
  K^r$. Indeed, we have
  \begin{multline*}
    \sum_{j=1}^r a_{i,j}(\underline{\zeta}) (h_j + I)= (\sigma
    \tau)^{-1} (h_i +I) = \\
    \tau^{-1}\left(\sum_{k=1}^r a_{i,k}(\underline{\gamma}) (h_k +
      I)\right) = \sum_{k=1}^r a_{i,k}(\underline{\gamma})
    \sum_{j=1}^r a_{k,j}(\underline{\eta}) (h_j + I),
  \end{multline*}
  so $a_{i,j}(\underline{\zeta}) = \sum_{k=1}^r
  a_{i,k}(\underline{\gamma}) a_{k,j}(\underline{\eta})$ by the linear
  independence of the $h_j + I$. Finally, the map $\map{\Phi}{X}{V}$
  given in \aref{1aEmbed} is $G$-equivariant, since for all $(\xi_1
  \upto \xi_n) \in X$ we have
  \begin{multline*}
    \Phi\left(\sigma(\underline{\xi})\right) =
    \left(\tilde{h_1}(\underline{\gamma},\underline{\xi}) \upto
      \tilde{h_r}(\underline{\gamma},\underline{\xi})\right) = \\
    \begin{pmatrix}
      a_{1,1}(\underline{\gamma}) & \cdots &
      a_{1,r}(\underline{\gamma}) \\
      \vdots & & \vdots \\
      a_{r,1}(\underline{\gamma}) & \cdots &
      a_{r,r}(\underline{\gamma})
    \end{pmatrix} \cdot \Phi(\underline{\xi}) =
    \sigma\left(\Phi(\underline{\xi})\right).
  \end{multline*}
  This completes the proof.
\end{proof}

\subsection{Inseparable closure} \label{2sInsep}

For $R$ an algebra over a field $K$ of characteristic $p > 0$ and $A
\subseteq R$ a subalgebra, we write
\[
\sqrt[p]{A} := \left\{g \in R \mid g^p \in A\right\}
\]
and call this the $p$-th root of $A$ in $R$. Moreover,
\[
\widehat{A} := \left\{g \in R \mid g^q \in A \ \text{for some
  $p$-power $q$}\right\}
\]
is called the \df{inseparable closure} of $A$ in $R$. $\sqrt[p]{A}$
and $\widehat{A}$ are clearly $A$-modules and $K$-algebras. The
following remark sheds some light on the importance of the inseparable
closure to invariant theory.

\begin{rem} \label{2rSeparating}
  Suppose that $G$ is a reductive group over an algebraically closed
  field $K$ of positive characteristic, and $V$ is a $G$-module. Let
  $A \subseteq K[V]^G$ be a \df{separating} subalgebra. By definition,
  this means that $A$ has the same capabilities of separating
  $G$-orbits as $K[V]^G$ (see
  \mycite[Definition~2.3.8]{Derksen:Kemper}). Since the natural map $V
  \to \Specm\left(K[V]^G\right)$ is surjective, this implies that the
  map $\Specm\left(K[V]^G\right) \to \Specm\left(A\right)$ is
  injective. Assume further that $A$ is generated by homogeneous
  invariants. Then Theorem~2.3.12 of~[\citenumber{Derksen:Kemper}]
  implies that $K[V]^G$ is integral over $A$. By
  \mycite[Sublemma~A.5.1]{Kallen93} (for an expanded version of the
  proof see http://www.math.uu.nl/people/vdkallen/\linebreak
  errbmod.pdf), the integrality and the injectiveness of the
  corresponding morphism imply that $K[V]^G \subseteq \widehat{A}$.
  Here the inseparable closure can and will be understood to be formed
  in $K[V]$. Since $\widehat{K[V]^G} = K[V]^G$ is always true, we
  conclude
  \begin{equation} \label{2eqKallen}
    \widehat{A} = K[V]^G.
  \end{equation}
  (In fact, the converse is also true: If a subalgebra $A \subseteq
  K[V]^G$ satisfies~\eqref{2eqKallen}, then it is separating.) The
  conclusion~\eqref{2eqKallen} is an improvement
  of~[\citenumber{Derksen:Kemper}, Theorem~2.3.12], which says that
  $K[V]^G$ is obtained from $A$ by first taking the normalization and
  then the inseparable closure. This improvement only holds in
  positive characteristic. Using~\eqref{2eqKallen}, we also get an
  improvement to the algorithm given by \mycite{kem.separating} for
  computing $K[V]^G$. In fact, Algorithm~1.9
  of~[\citenumber{kem.separating}] first calculates the normalization
  (Step~2) and then the inseparable closure (Step~3). Thus in positive
  characteristic, Step~2 can in fact be omitted.
\end{rem}

In \mycite[Algorithm~4.2]{kem.separating} an algorithm is given for
computing $\sqrt[p]{A}$ in the case that $R$ is a polynomial ring. We
need to modify this algorithm substantially to make it suitable for
the case that $R$ is any reduced finitely generated $K$-algebra.

\begin{alg}[$p$-th root of a subalgebra] \label{2aPRoot} \mbox{}
  \begin{description}
  \item[Input:] Polynomials $h_1 \upto h_l \in K[x_1 \upto x_n]$ over
    a perfect field $K$ of characteristic $p > 0$ such that $I = (h_1
    \upto h_l)$ is a radical ideal, and polynomials $f_1 \upto f_m \in
    K[x_1 \upto x_n]$ defining a subalgebra $A := K[f_1+I \upto f_m+I]
    \subseteq R := K[x_1 \upto x_n]/I$.
  \item[Output:] Polynomials $g_1 \upto g_r \in K[x_1 \upto x_n]$ such
    that
    \[
    \sqrt[p]{A} = \sum_{i=1}^r A \cdot (g_i + I).
    \]
  \end{description}
  \begin{enumerate}
    \renewcommand{\theenumi}{\arabic{enumi}}
  \item Let $F$ be a free $K[x_1 \upto x_n]$-module of rank $(p^m + l
    p^n + 1)$ with basis vectors $e_{i_1 \upto i_m}$ ($i_\nu \in \{0
    \upto p-1\}$), $e_{i_1 \upto i_n}^{(j)}$ ($j \in \{1 \upto l\}$,
    $i_\nu \in \{0 \upto p-1\}$), and $e^{(0)}$.
  \item Form the $K[x_1 \upto x_n]$-module $M \subseteq F$ formed by
    all
    \[
    e_{i_1 \upto i_m} + \prod_{\nu=1}^m f_\nu^{i_\nu} e^{(0)} \quad
    (i_\nu \in \{0 \upto p-1\})
    \]
    and
    \[
    e_{i_1 \upto i_n}^{(j)} + \prod_{\nu=1}^n x_\nu^{i_\nu} h_j
    e^{(0)} \quad (j \in \{1 \upto l\}, \ i_\nu \in \{0 \upto p-1\}).
    \]
  \item Let $K[y_1 \upto y_n]$ be a new polynomial ring and write
    $\phi$ for the map $K[y_1 \upto y_n] \to K[x_1 \upto x_n]$ sending
    each $y_i$ to $x_i^p$. Also use the letter $\phi$ for the
    component-wise application of $\phi$ to the free module \linebreak
    $K[y_1 \upto y_n]^{p^m + l p^m + 1}$.
  \item \label{s24} Use \aref{2aIntersect} below to compute $C_1 \upto
    C_s \in K[y_1 \upto y_n]^{p^m + l p^m + 1}$ such that the
    $\phi(C_i)$ generate
    \[
    M \cap K[x_1^p \upto x_n^p]^{p^m + l p^m + 1}
    \]
    as a $K[x_1^p \upto x_n^p]$-module.
  \item \label{s25} With $\map{\pi}{K[y_1 \upto y_n]^{p^m + l p^m +
    1}}{K[y_1 \upto y_n]^{p^m}}$ the projection on the first $p^m$
    coordinates, form
    \[
    \tilde{M} := \sum_{i=1}^s K[y_1 \upto y_n] \cdot \pi(C_i)
    \subseteq K[y_1 \upto y_n]^{p^m}.
    \]
    Moreover, form $\tilde{f}_1 \upto \tilde{f}_m \in K[y_1 \upto
    y_n]$ from the $f_i$ by raising each coefficient of $f_i$ to its
    $p$-th power and substituting each $x_j$ by $y_j$.
  \item \label{s26} Use \aref{2aIntersect} to compute generators $s_1
    \upto s_r$ of $\tilde{M} \cap K[\tilde{f}_1 \upto
    \tilde{f}_m]^{p^m}$ as a module over $K[\tilde{f}_1 \upto
    \tilde{f}_m]$ and a matrix $(a_{i,j}) \in K[y_1 \upto y_n]^{r
    \times s}$ such that
    \[
    s_i = \sum_{j=1}^s a_{i,j} \pi(C_j).
    \]
  \item \label{s27} For $i = 1 \upto r$, let $g_i \in K[x_1 \upto
    x_n]$ be the (unique) $p$-th root of
    \[
    \sum_{j=1}^s \phi(a_{i,j}) \cdot \phi\left(C^{(0)}_j\right) \in
    K[x_1^p \upto x_n^p],
    \]
    where $C^{(0)}_j$ is the $e^{(0)}$-component of $C_j$.
  \end{enumerate}
\end{alg}

\begin{proof}[Proof of correctness of \aref{2aPRoot}]
  Throughout the proof we write $\overline{g} := g + I \in R$ for the
  residue class of a polynomial $g \in K[x_1 \upto x_n]$. Take an
  element
  \[
  (\underline{u}) = \sum_{i_1 \upto i_m=0}^{p-1} u_{i_1 \upto i_m}
  e_{i_1 \upto i_m} + \sum_{j=1}^l \sum_{i_1 \upto i_n = 1}^{p-1}
  u_{i_1 \upto i_n}^{(l)} e_{i_1 \upto i_n}^{(l)} + u^{(0)} e^{(0)}
  \]
  from $F$ (with all $u$'s from $K[x_1 \upto x_n]$). Then
  $(\underline{u}) \in M$ implies
  \[
  \sum_{i_1 \upto i_m=0}^{p-1} u_{i_1 \upto i_m} \cdot \prod_{\nu=1}^m
  f_\nu^{i_\nu} + \sum_{j=1}^l \sum_{i_1 \upto i_n = 1}^{p-1} u_{i_1
  \upto i_n}^{(l)} \cdot \prod_{\nu=1}^n x_\nu^{i_\nu} h_j - u^{(0)} =
  0,
  \]
  so
  \begin{equation} \label{2eqM}
    \overline{u^{(0)}} = \sum_{i_1 \upto i_m=0}^{p-1} \overline{u_{i_1
      \upto i_m}} \cdot \prod_{\nu=1}^m \overline{f_\nu}^{i_\nu}.
  \end{equation}
  
  First we show that all $\overline{g_i}^p$ lie in $A$. All
  $\phi(C_j)$ lie in $M$, and therefore also $\sum_{j=1}^s
  \phi(a_{i,j}) \phi(C_j) \in M$. The $e^{(0)}$-component of
  $\sum_{j=1}^s \phi(a_{i,j}) \phi(C_j)$ is $g_i^p$ by Step~\ref{s27}
  of the algorithm. Moreover, for all $i_1 \upto i_m \in \{0 \upto
  p-1\}$, the $e_{i_1 \upto i_m}$-component of $\sum_{j=1}^s a_{i,j}
  C_j$ is equal to the corresponding component of $s_i$ by
  Step~\ref{s26}, and $s_i$ lies in $K[\tilde{f}_1 \upto
  \tilde{f}_m]$. Thus the $e_{i_1 \upto i_m}$-component of
  $\sum_{j=1}^s \phi(a_{i,j}) \phi(C_j)$ lies in $K[\phi(\tilde{f}_1)
  \upto \phi(\tilde{f}_m)]$. But $\phi(\tilde{f}_j) = f_j^p$ by the
  definition of the $\tilde{f}_j$, so from~\eqref{2eqM} we obtain
  \[
  \overline{g_i}^p = \sum_{i_1 \upto i_m=0}^{p-1} \overline{u_{i_1
    \upto i_m}} \cdot \prod_{\nu=1}^m \overline{f_\nu}^{i_\nu}
  \]
  with $u_{i_1 \upto i_m}$ elements from $K[f_1^p \upto f_m^p]$. Hence
  indeed $\overline{g_i}^p \in A$.
  
  Now we show that every element from $\sqrt[p]{A}$ is an $A$-linear
  combination of $\overline{g_1} \upto \overline{g_r}$. So take $g \in
  K[x_1 \upto x_n]$ such that $\overline{g} \in \sqrt[p]{A}$. This
  means that $\overline{g}^p \in A \cap K[\overline{x_1}^p \upto
  \overline{x_m}^p]$. So on the one hand there exists $u^{(0)} \in
  K[x_1^p \upto x_n^p]$ with $\overline{g}^p = \overline{u^{(0)}}$,
  and on the other hand we have $u_{i_1 \upto i_m} \in K[f_1^p \upto
  f_m^p]$ (for $i_1 \upto i_m \in \{0 \upto p-1\}$) such that
  \begin{equation} \label{2eqGp}
  \overline{u^{(0)}} = \overline{g}^p = \sum_{i_1 \upto i_m = 0}^{p-1}
    \overline{u_{i_1 \upto i_m}} \cdot \prod_{\nu=1}^m
    \overline{f_\nu}^{i_\nu}.
  \end{equation}
  Indeed, any element of $A$ can be written like this. But this means
  that there exist polynomials $u_{i_1 \upto i_n}^{(j)} \in K[x_1^p
  \upto x_n^p]$ (for $j \in \{1 \upto l\}$ and $i_1 \upto i_m \in \{0
  \upto p-1\}$) such that
  \begin{equation} \label{2eqI}
    u^{(0)} - \sum_{i_1 \upto i_m = 0}^{p-1} u_{i_1 \upto i_m} \cdot
    \prod_{\nu=1}^m f_\nu^{i_\nu} = \sum_{j=1}^l \sum_{i_1 \upto i_n =
    1}^{p-1} u_{i_1 \upto i_n}^{(j)} \cdot \prod_{\nu=1}^n
    x_\nu^{i_\nu} h_j.
  \end{equation}
  Indeed, any element from $I$ can be written as an expression as on
  the right hand side of~\eqref{2eqI}. Equation~\eqref{2eqI} implies
  that the element
  \[
  (\underline{u}) = \sum_{i_1 \upto i_m=0}^{p-1} u_{i_1 \upto i_m}
  e_{i_1 \upto i_m} + \sum_{j=1}^l \sum_{i_1 \upto i_n = 1}^{p-1}
  u_{i_1 \upto i_n}^{(l)} e_{i_1 \upto i_n}^{(l)} + u^{(0)} e^{(0)}
  \]
  of $F$ lies in $M$. Observe that all coefficients of
  $(\underline{u})$ lie in $K[x_1^p \upto x_n^p]$. Thus by
  Step~\ref{s24} of the algorithm, $(\underline{u})$ lies in the
  $K[x_1^p \upto x_n^p]$-span of the $\phi(C_i)$. It is convenient to
  write $u_{i_1 \upto i_m} = \phi(U_{i_1 \upto i_m})$ with $U_{i_1
  \upto i_m} \in K[y_1 \upto y_n]$. Then
  \[
  \sum_{i_1 \upto i_m = 0}^{p-1} U_{i_1 \upto i_m} \cdot e_{i_1 \upto
  i_m} \in \tilde{M}
  \]
  with $\tilde{M}$ as defined in Step~\ref{s25}. But we know that the
  $u_{i_1 \upto i_m}$ really lie in $K[f_1^p \upto f_m^p]$, which
  implies $U_{i_1 \upto i_m} \in K[\tilde{f}_1 \upto \tilde{f}_m]$. So
  by Step~\ref{s26} there exist $B_1 \upto B_r \in K[\tilde{f}_1 \upto
  \tilde{f}_m]$ such that
  \[
  \sum_{i_1 \upto i_m = 0}^{p-1} U_{i_1 \upto i_m} \cdot e_{i_1 \upto
  i_m} = \sum_{i=1}^r B_i s_i = \sum_{i=1}^r B_i \cdot \sum_{j=1}^s
  a_{i,j} \pi(C_j).
  \]
  Applying $\phi$ to this and setting $b_i := \phi(B_i) \in K[f_1^p
  \upto f_m^p]$ yields
  \[
  \sum_{i_1 \upto i_m = 0}^{p-1} u_{i_1 \upto i_m} \cdot e_{i_1 \upto
  i_m} = \sum_{i=1}^r b_i \sum_{j=1}^s \phi(a_{i,j}) \cdot \sum_{i_1
  \upto i_m = 0}^{p-1} \phi(C_j^{(i_1 \upto i_m)}) e_{i_1 \upto i_m},
  \]
  where $C_j^{(i_1 \upto i_m)}$ stands for the $e_{i_1 \upto
    i_m}$-component of $C_j$. So for every $i_1 \upto i_m \linebreak
  \in \{0 \upto p-1\}$ we have
  \[
  u_{i_1 \upto i_m} = \sum_{i=1}^r b_i \sum_{j=1}^s \phi(a_{i,j})
  \cdot \phi(C_j^{(i_1 \upto i_m)}).
  \]
  Substituting this into~\eqref{2eqGp} yields
  \[
  \overline{g}^p = \sum_{i=1}^r \overline{b_i} \sum_{j=1}^s
  \overline{\phi(a_{i,j})} \cdot \sum_{i_1 \upto i_m = 0}^{p-1}
  \overline{\phi(C_j^{(i_1 \upto i_m)})} \cdot \prod_{\nu=1}^m
  \overline{f_\nu}^{i_\nu}.
  \]
  But $\phi(C_j) \in M$ for all~$j$, so we can apply~\eqref{2eqM} and
  obtain
  \[
  \overline{g}^p = \sum_{i=1}^r \overline{b_i} \sum_{j=1}^s
  \overline{\phi(a_{i,j})} \cdot \overline{\phi(C_j^{(0)})} =
  \sum_{i=1}^r \overline{b_i} \overline{g_i}^p,
  \]
  where the last equality follows from Step~\ref{s27}. Since $b_i \in
  K[f_1^p \upto f_m^p]$ and since $K$ is perfect, there exist $p$-th
  roots of the $b_i$ in $K[f_1 \upto f_m]$. Hence there exist
  $\tilde{b}_i \in A$ with $\tilde{b}_i^p = \overline{b_i}$. We obtain
  \[
  \overline{g}^p = \sum_{i=1}^r \tilde{b}_i^p \overline{g_i}^p =
  \left(\sum_{i=1}^r \tilde{b}_i \overline{g_i}\right)^p.
  \]
  Since $I$ is a radical ideal, this implies $\overline{g} =
  \sum_{i=1}^r \tilde{b}_i \overline{g_i}$ with $\tilde{b}_i \in
  A$. This completes the proof.
\end{proof}

The following algorithm is used in \aref{2aPRoot}. It is a slight
extension of Algorithm~7 in \mycite{kem:e} (see also
\mycite[Section~3.6, Exercise~10~c]{Kreuzer:Robbiano}).

\begin{alg}[Intersection of a submodule with a subalgebra]
  \label{2aIntersect} \mbox{}
  \begin{description}
  \item[Input:] Generators $b_1 \upto b_l$ of a submodule $M \subseteq
    K[x_1 \upto x_n]^r$, and polynomials $f_1 \upto f_m \in K[x_1
    \upto x_n]$ generating a subalgebra $A := \linebreak K[f_1 \upto
    f_m]$.
  \item[Output:]
    \begin{enumerate}
    \item[-] Generators $c_1 \upto c_s$ of $M \cap A^r$ as an $A$-module;
    \item[-] if desired, elements $C_1 \upto C_s \in K[y_1 \upto
      y_m]^r$ with $K[y_1 \upto y_m]$ a polynomial ring such that
      substituting $y_i \mapsto f_i$ in $C_i$ yields $c_i$;
    \item[-] if desired, a matrix $(a_{i,j}) \in K[x_1 \upto x_n]^{s
      \times l}$ such that
      \begin{equation} \label{2eqCi}
        c_i = \sum_{j=1}^l a_{i,j} b_j
      \end{equation}
      for all $i \in \{1 \upto s\}$.
    \end{enumerate}
  \end{description}
  \begin{enumerate}
    \renewcommand{\theenumi}{\arabic{enumi}}
  \item Let $S := K[x_1 \upto x_n,y_1 \upto y_m]$ be a polynomial ring
    with additional indeterminates $y_1 \upto y_m$. Form the submodule
    $\widetilde{M}$ of $S^r$ generated by $b_i$ ($i = 1 \upto l$) and
    by $(f_j - y_j) \cdot e_k$ ($j = 1 \upto m$, $k = 1 \upto r$),
    where the $e_k$ are the free generators of $S^r$.
  \item Choose a monomial ordering ``$>$'' on $S^r$ such that
    \[
    x_i e_j > y_1^{d_1} \cdots y_m^{d_m} e_{j'}
    \]
    for all $i \in \{1 \upto n\}$, $j,j' \in \{1 \upto r\}$, and $d_k
    \in \NN$.
  \item \label{4nMtilde} Compute a Gr\"obner basis $\mathcal{G}$ of
    $\widetilde{M}$ with respect to ``$>$''. If the matrix $(a_{i,j})$
    is desired, keep track of how each element from $\mathcal{G}$ can
    be represented as an $S$-linear combination of the $b_i$ and $(f_j
    - y_j) \cdot e_k$.
  \item Let $C_1 \upto C_s$ be those elements from $\mathcal{G}$ which
    lie in $K[y_1 \upto y_m]^r$, and obtain $c_i$ by substituting $y_i
    \mapsto f_i$ in $C_i$.
  \item \label{s35} If the matrix $(a_{i,j})$ is desired, use the
    normal form algorithm to express each $c_i$ as an $S$-linear
    combination of the elements of $\mathcal{G}$, and then as a linear
    combination of the $b_j$ and $(f_j - y_j) \cdot e_k$:
    \begin{equation} \label{2eqCi2}
      c_i = \sum_{j=1}^l \tilde{a}_{i,j} b_j + \sum_{j=1}^m
      \sum_{k=1}^r \tilde{a}_{i,j,k} (f_j - y_j) \cdot e_k
    \end{equation}
    with $\tilde{a}_{i,j},\tilde{a}_{i,j,k} \in S$. Then $a_{i,j}$ is
    obtained by substituting $y_k \mapsto f_k$ in $\tilde{a}_{i,j}$.
  \end{enumerate}
\end{alg}

\begin{proof}[Proof of correctness of \aref{2aIntersect}]
  We only need to prove the correctness of step~\ref{s35}, since
  everything else is already contained in Algorithm~7 from \linebreak
  \mycite{kem:e}. First, the $c_i$ are contained in $M$ and therefore
  in $\widetilde{M}$, so the normal form is zero. Hence the
  $\tilde{a}_{i,j}$ and $\tilde{a}_{i,j,k}$ in~\eqref{2eqCi2} exist.
  Now substituting $y_\nu \mapsto f_\nu$ in~\eqref{2eqCi2}
  yields~\eqref{2eqCi}.
\end{proof}

\begin{remark} \label{2remmodintersect}
Algorithm~\ref{2aIntersect} can be generalized to arbitrary finitely generated
commutative $K$-algebras. Suppose that $A=K[x_1,\dots,x_l]/J$ is a subalgebra
of a $K$-algebra $B=K[x_1,\dots,x_n]/ I$ with $l \le n$ (so $I\cap K[x_1,\dots,x_l]=J$),
and $M$ is a $B$-submodule of $B^r$. Consider the quotient map
$$
\phi:K[x_1,\dots,x_n]^r\to B^r=K[x_1,\dots,x_n]^r/I^r
$$
and define $\overline{M}=\phi^{-1}(M)$. To compute $M\cap A^r$,
note that
\begin{multline*}
  M\cap A^r = M \cap \varphi(K[x_1,\dots,x_l]^r)= \\
  \varphi(\varphi^{-1}(M)\cap K[x_1,\dots,x_l]^r)=
  \varphi(\overline{M}\cap K[x_1,\dots,x_l]^r).
\end{multline*}
Generators of $\overline{M}\cap K[x_1,\dots,x_l]^r$ can be computed 
using \aref{2aIntersect}.
\end{remark}

We are now ready to present an algorithm for computing generating
invariants of a reductive groups acting on an affine variety. Recall
that every reductive group in characteristic~0 is linearly reductive,
so Derksen's algorithm~[\citenumber{Derksen:99}] applies for computing
its invariant rings. Therefore we may assume that the characteristic
is positive.

\begin{alg}[Invariants of a reductive group acting on an affine
  variety] \label{2aInvarAff} \mbox{}
  \begin{description}
  \item[Input:] A reductive algebraic group $G$ over an algebraically
    closed field $K$ of characteristic~$p$, and a $G$-variety $X$
    given according to \conref{1cInput}.
  \item[Output:] Polynomials $f_1 \upto f_k \in K[x_1 \upto x_n]$ such
    that the residue classes $f_i + I \in K[X]$ are $G$-invariant and
    generate $K[X]^G$.
  \end{description}
  \begin{enumerate}
    \renewcommand{\theenumi}{\arabic{enumi}}
  \item \label{s41} Use \aref{1aEmbed} to calculate an equivariant
    embedding $X \to V$ into a $G$-module $V$. Let $h_1 \upto h_r \in
    K[x_1 \upto x_n]$ be the polynomials by which this embedding is
    given, and write $K[V] = K[y_1 \upto y_r]$ with $y_i$
    indeterminates.
  \item \label{s42} Use Algorithm~1.9 of \mycite{kem.separating} to
    compute generators $F_1 \upto F_k \in K[y_1 \upto y_r]$ of
    $K[V]^G$. In fact, it is enough if $F_1 \upto F_k$ are
    homogeneous, separating invariants, as computed by Algorithm~2.9
    of~[\citenumber{kem.separating}], in which case $K[V]^G$ will be
    the inseparable closure of $K[F_1 \upto F_k]$ (see
    \rref{2rSeparating}).
%    Optionally, Step~3 of Algorithm~1.9 may be omitted, in which case
%    $K[V]^G$ will be the inseparable closure of $K[F_1 \upto F_k]$.
  \item \label{s43} For $i = 1 \upto k$, set
    \[
    f_i := F_i(h_1 \upto h_r) \in K[x_1 \upto x_n],
    \]
    and let $A \subseteq K[X]$ be the $K$-algebra generated by the
    $\overline{f_i} := f_i + I \in K[X]$.
  \item \label{s44} Use \aref{2aPRoot} to compute $\sqrt[p]{A}
    \subseteq K[X]$. Let $S$ be the set of generators of $\sqrt[p]{A}$
    returned by \aref{2aPRoot}.
  \item \label{s45} For each $\overline{g} \in S$, test whether
    $\overline{g} \in A$ (see \rref{2rMember}). If $\overline{g}
    \notin A$, set
    \[
    f_{k+1} := h, \quad A:=K[\overline{f_1} \upto \overline{k_{k+1}}]
    \quad \text{and} \quad k := k+1.
    \]
  \item If in Step~\ref{s45} all $\overline{g} \in S$ were found to
    already lie in $A$, then $K[X]^G = A$ and we are done. Otherwise,
    go back to Step~\ref{s44}.
  \end{enumerate}
\end{alg}

\begin{remark} \label{2rMember}
  The membership test in Step~\ref{s45} of \aref{2aInvarAff} can be
  done as follows: With additional indeterminates $t,t_1 \upto t_k$
  choose a monomial ordering on $K[t,t_1 \upto t_k,x_1 \upto x_n]$
  such that every monomial in $t,t_1 \upto t_k$ is smaller than any
  $x_i$, and every monomial in $t_1 \upto t_k$ is smaller than~$t$.
  Compute a Gr\"obner basis $\mathcal{G}$ of the ideal in $K[t,t_1
  \upto t_k,x_1 \upto x_n]$ generated by
  \[
  g - t, \quad f_i - t_i \quad (i = 1 \upto k), \quad \text{and} \quad
  I
  \]
  with respect to this monomial ordering. Then $\overline{g} \in A$ if
  and only if $\mathcal{G}$ contains a polynomial with the lead
  monomial~$t$. This can be viewed as a (very) special case of
  \aref{2aIntersect}.
\end{remark}

\begin{proof}[Proof of correctness of \aref{2aInvarAff}]
  With $\map{\phi}{X}{V}$ the map given in Step~\ref{s41} of the
  algorithm, we have a $G$-equivariant epimorphism
  \[
  \mapl{\phi^*}{K[V]}{K[X]}{F}{F \circ \phi}
  \]
  of $K$-algebras, and $f_i + I$, as formed in Step~\ref{s43}, is just
  the $\phi^*$-image of $F_i$. Thus $A = K[f_1 + I \upto f_k + I]$,
  also formed in Step~\ref{s43}, is a subalgebra of $K[X]^G$. The
  algorithm keeps increasing~$k$ and enlarging $A$ until reaching the
  inseparable closure $\widehat{A}$. In this proof, the letter $A$
  will always denote the subalgebra formed in Step~\ref{s43}.
  
  Since $K[X]$ is a reduced ring, clearly every $\overline{g} \in
  \widehat{A}$ is an invariant in $K[X]^G$. Conversely, take
  $\overline{g} \in K[X]^G$. Since $G$ is reductive, there exists a
  $p$-power $s$ such that $\overline{g}^s \in
  \phi^*\left(K[V]^G\right)$ (see \mycite[Lemma~A.1.2]{MFK}), so
  $\overline{g}^s = \phi^*(F)$ with $F \in K[V]^G$. Since $K[V]^G$ is
  the inseparable closure of $K[F_1 \upto F_k]$, there exists a
  $p$-power $q$ with $F^q \in K[F_1 \upto F_k]$, so
  \[
  \overline{g}^{s q} \in \phi^*\left(K[F_1 \upto F_k]\right) = A.
  \]
  This shows that indeed $\widehat{A} = K[X]^G$. Since $K[X]^G$ is
  finitely generated as a $K$-algebra (see \mycite{nag:d}) and $K[X]^G
  = \widehat{A}$ by the above, $K[X]^G$ is finitely generated as an
  $A$-module. This proves that \aref{2aInvarAff} terminates after
  finitely many steps.
\end{proof}

\begin{problem}
  We are still left with the problem of finding an algorithm that
  computes $A^G$, where $A$ is a finitely generated $K$-algebra which
  need not be reduced and $G$ is a reductive group acting on $A$ such
  that $A$ is locally finite. By \mycite{nag:d}, $A^G$ is finitely
  generated in this case.
\end{problem}

\subsection{Connected groups acting on normal varieties} \label{3sNormal}

In this section we consider the case of a connected reductive group
$G$ acting on a normal, irreducible affine variety $X$. This case is
more special than the one dealt with in \aref{2aInvarAff}. But we will
present a simpler and probably faster algorithm for computing
$K[X]^G$. The idea for this algorithm was stimulated by the
paper~[\citenumber{Hashimoto}] of \citename{Hashimoto}, which gives an
algorithm for computing generating invariants of a simply connected
simple linear algebraic group with a linear action.

Recall that for a reductive group $G$ and a $G$-module $V$ we can
always compute a subalgebra $A \subseteq K[V]^G$ such that $K[V]^G$ is
integral over $A$. Indeed, the possibly simplest way of doing this is
by computing what \mycite{kem.separating} calls the ``Derksen ideal''
by performing the first two steps of Algorithm~2.9
in~[\citenumber{kem.separating}] (same as the first three steps in
Algorithm~4.1.9 from~[\citenumber{Derksen:Kemper}]), and then setting
one set of variables equal to zero in the generators of the Derksen
ideal (Step~4 in [\citenumber{Derksen:Kemper}, Algorithm~4.1.9]). This
will yield a set of polynomials $\{g_1 \upto g_s\} \subset K[V]$ which
define Hilbert's nullcone (see [\citenumber{Derksen:Kemper},
Section~2.4.1 and Remark~4.1.4]). Now use Algorithm~2.7
from~[\citenumber{kem.separating}] to compute homogeneous invariants
$f_1 \upto f_k \in K[V]^G$ degree by degree until every $g_i$ lies in
the radical of the ideal in $K[V]$ generated by the $f_j$. Then
$K[V]^G$ will be integral over $K[f_1 \upto f_k]$. An alternative
method would be to use Algorithm~2.9
from~[\citenumber{kem.separating}] to compute a graded separating
subalgebra of $K[V]^G$. Then $K[V]^G$ will be integral over this
subalgebra (see Lemma~1.3 in [\citenumber{kem.separating}]). Compared
with the first method outlined above, computing separating invariants
involves one additional major Gr\"obner basis computation, which is
not really necessary for our purposes.

We can now present an algorithm for computing $K[X]^G$ for $X$ normal
and $G$ connected and reductive. The algorithm involves the
computation of the integral closure of one ring in another, which will
be discussed shortly.

\begin{alg}[Invariants for $G$ connected and reductive, $X$ normal]
  \label{3aNormal} \mbox{}
  \begin{description}
  \item[Input:] A connected, reductive group $G$ over an algebraically
    closed field $K$, and a normal, irreducible $G$-variety $X$, given
    according to \conref{1cInput}.
  \item[Output:] Generators of $K[X]^G$ as a $K$-algebra.
  \end{description}
  \begin{enumerate}
    \renewcommand{\theenumi}{\arabic{enumi}}
  \item \label{s51} Use \aref{1aEmbed} to calculate an equivariant
    embedding $\map{\phi}{X}{V}$ into a $G$-module $V$.
  \item \label{s52} Construct invariants $f_1 \upto f_k \in K[V]^G$
    such that $K[V]^G$ is integral over $K[f_1 \upto f_k]$ (see the
    above discussion).
  \item \label{s53} Form the subalgebra $A \subseteq K[X]^G$ generated
    by all $f_i \circ \phi$ (see Step~\ref{s43} of \aref{2aInvarAff}).
  \item \label{s54} Use \aref{3aIC} to compute the integral closure
    $B$ of $A$ in $K[X]$. Then
    \[
    K[X]^G = B.
    \]
  \end{enumerate}
\end{alg}

The following lemma will be used in the proof of correctness of
\linebreak \aref{3aNormal}. We write $G^0$ for the connected component
of an algebraic group $G$.

\begin{lemma} \label{3lConnected}
  Let $G$ be an affine algebraic group over an algebraically closed
  field $K$, and let $X$ be a $G$-variety. Let $A \subseteq K[X]^G$ be
  a subalgebra such that $K[X]^G$ is integral over $A$. Then
  $K[X]^{G^0}$ is the integral closure of $A$ in $K[X]$.
\end{lemma}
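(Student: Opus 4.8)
The plan is to prove two inclusions. Write $B$ for the integral closure of $A$ in $K[X]$, and $R = K[X]$. Since $K[X]^G$ is integral over $A$ and $K[X]^{G^0} \subseteq K[X]$ is a ring containing $K[X]^G$... wait, actually $K[X]^{G^0} \supseteq K[X]^G$, not the other way. So first I would observe that $K[X]^{G^0}$ need \emph{not} be integral over $A$ a priori; the content of the lemma is that it nevertheless is, and equals $B$. For the inclusion $B \subseteq K[X]^{G^0}$: every element of $B$ is integral over $A \subseteq K[X]^G$, hence integral over $K[X]^G$; so it suffices to show $K[X]^{G^0}$ is integrally closed in $K[X]$, i.e.\ that any $b \in K[X]$ integral over $K[X]^G$ (equivalently over $K[X]^{G^0}$, since the latter is larger) already lies in $K[X]^{G^0}$. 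Here I would use that $G^0$ is connected: the orbit map description gives, for $b \in K[X]$ and $\sigma \in G^0$, that $\sigma(b)$ is again a root of the same monic polynomial over $K[X]^G$ that $b$ satisfies (apply $\sigma$ to the integral equation, noting $\sigma$ fixes the coefficients); so the map $G^0 \to K[X]$, $\sigma \mapsto \sigma(b)$, has image contained in the finite set of roots of that polynomial in $\mathrm{Frac}(K[X])$ — but one must be careful since $K[X]$ may not be a domain. I would reduce to the irreducible components, or more cleanly argue that $\sigma \mapsto \sigma(b)$ is a morphism from the irreducible variety $G^0$ to $X \times \mathbb{A}^1$ (graph) landing in a finite union of subvarieties, hence is constant, giving $\sigma(b) = b$ for all $\sigma \in G^0$, i.e.\ $b \in K[X]^{G^0}$.

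For the reverse inclusion $K[X]^{G^0} \subseteq B$: I need every $G^0$-invariant to be integral over $A$. Since $G^0$ is a closed normal subgroup of $G$ of finite index, the finite group $\Gamma := G/G^0$ acts on $K[X]^{G^0}$, and $\left(K[X]^{G^0}\right)^\Gamma = K[X]^G$. Then $K[X]^{G^0}$ is integral over $K[X]^G$: each $b \in K[X]^{G^0}$ satisfies the monic polynomial $\prod_{\gamma \in \Gamma}(T - \gamma(b))$, whose coefficients are $\Gamma$-invariant, hence lie in $K[X]^G$. Since $K[X]^G$ is in turn integral over $A$ by hypothesis, transitivity of integral dependence gives that $b$ is integral over $A$, and $b \in K[X]$, so $b \in B$.

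Combining the two inclusions yields $B = K[X]^{G^0}$, which is the claim. The main obstacle I anticipate is the first inclusion, specifically handling the possibility that $K[X]$ is not a domain when arguing that the connected group $G^0$ must act trivially on the finite root-set of an integral equation; the clean fix is to pass to the (finitely many) irreducible components of $X$, which $G^0$ permutes — and being connected, $G^0$ fixes each component setwise — so on each component one is back in the domain case and the rigidity argument (a morphism from a connected variety to a finite set is constant) applies directly. Everything else is a routine invocation of transitivity of integral dependence and the standard fact that $\left(K[X]^{G^0}\right)^{G/G^0} = K[X]^G$.
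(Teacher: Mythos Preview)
Your proof is correct and follows essentially the same approach as the paper: for $B \subseteq K[X]^{G^0}$ both you and the paper observe that $\sigma(b)$ satisfies the same monic equation as $b$ (coefficients being $G$-invariant), whence the orbit of $b$ is finite---the paper phrases the conclusion as ``stabilizer has finite index, hence contains $G^0$'' while you phrase it as ``a morphism from the connected $G^0$ to a finite set is constant,'' and both handle the non-domain case by passing to irreducible components; for $K[X]^{G^0} \subseteq B$ you both use the norm polynomial $\prod_{\sigma \in G/G^0}(T - \sigma(f))$ and transitivity of integrality. Your aside about ``$K[X]^{G^0}$ integrally closed in $K[X]$'' is a slightly stronger statement than you need (and the ``graph in $X \times \mathbb{A}^1$'' remark is muddled---the clean target for $\sigma \mapsto \sigma(b)$ is the finite-dimensional $G$-stable subspace of $K[X]$ generated by $b$), but the argument you actually carry out is the right one.
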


\begin{proof}
  We write $B$ for the integral closure of $A$ in $K[X]$. First take
  $b \in B$ arbitrary. There exists a monic polynomial $F \in A[T]$
  with $F(b) = 0$. Thus for every $\sigma \in G$ we also have
  $F\left(\sigma(b)\right) = 0$. On the other hand, $F$ has at most
  finitely many zeros in $K[X]$. Indeed, this follows from the fact
  that for each irreducible component $X_i$ of $X$, restricting the
  coefficients of $F$ yields a non-zero polynomial with only finitely
  many zeros in $K[X_i]$. It follows that the $G$-orbit of~$b$ is
  finite.  Therefore the stabilizer $G_b \subseteq G$ of~$b$ has
  finite index in $G$, which implies $G^0 \subseteq G_b$. Hence $b \in
  K[X]^{G^0}$.

  Conversely, take $f \in K[X]^{G^0}$. Then
  \[
  F(T) := \prod_{\sigma \in G/G^0} \left(T - \sigma(f)\right) \in
  K[X]^G[T],
  \]
  and $F(f) = 0$. So $f$ is integral over $K[X]^G$ and hence also over
  $A$. It follows that $f \in B$.
\end{proof}

\begin{proof}[Proof of correctness of \aref{3aNormal}]
  It follows from the reductivity of $G$ that $K[X]^G$ is integral
  over $A$. From this, $K[X]^G = B$ follows by \lref{3lConnected}.
\end{proof}

The following algorithm for computing the integral closure of one ring
in another is mostly drawn from \mycite[Chapter~6]{Vasconcelos}.

\begin{alg}[Integral closure] \label{3aIC} \mbox{}
  \begin{description}
  \item[Input:] A prime ideal $I \subseteq K[x_1 \upto x_n]$ defining
    a normal domain $B := \linebreak K[x_1 \upto x_n]/I$, and
    polynomials $f_1 \upto f_k \in K[x_1 \upto x_n]$ defining a
    subalgebra $A = K[\overline{f_1} \upto \overline{f_k}] \subseteq
    B$, where we write $\overline{f_i} := f_i + I$.
  \item[Output:] Polynomials $g_1 \upto g_r \in K[x_1 \upto x_n]$ such
    that $K[\overline{g_1} \upto \overline{g_r}]$ is the integral
    closure of $A$ in $B$.
  \end{description}
  \begin{enumerate}
    \renewcommand{\theenumi}{\arabic{enumi}}
  \item \label{s61} With an additional indeterminate~$t$, form the
    algebra
    \[
    D := K[\overline{f_1} \upto \overline{f_k},t,t \overline{x_1}
    \upto t \overline{x_n}] \subseteq B[t].
    \]
  \item \label{s62} Compute $h_1 \upto h_r \in K[x_1 \upto x_n,t]$
    such that the $\overline{h_i} \in B[t]$ generate the normalization
    $\tilde{D}$ of $D$. This can be done by using de Jong's algorithm
    (see \mycite{deJong} or \mycite[Section~1.6]{Derksen:Kemper}).
  \item \label{s63} For $i = 1 \upto r$, obtain $g_i$ from by setting
    $t = 0$ in $h_i$.
  \end{enumerate}  
\end{alg}

\begin{proof}[Proof of correctness of \aref{3aIC}]
  Since $B$ is a normal domain, the same is true for $B[t]$ (see, for
  example, \mycite[Exercise~4.18]{eis}). Therefore $\tilde{D}$ is
  contained in $B[t]$, which shows that its generators
  $\overline{h_i}$ do lie in $B[t]$ rather than just in $Q(B)[t]$,
  where $Q(B)$ denotes the field of fractions of $B$. Consider the map
  $\map{\phi}{B[t]}{B}$ of $B$-algebras given by $t \mapsto 0$. The
  definition of $D$ implies $\phi(D) = A$. For each $h \in \tilde{D}$
  we have an equation
  \[
  h^s + d_1 h^{s-1} + \cdots + d_{r-1} h + d_s = 0
  \]
  with $d_i \in D$. Applying $\phi$ to this yields an integral
  equation for $\phi(h)$ over $A$. If follows that the $\overline{g_i}
  = \phi(\overline{h_i})$ from Step~\ref{s63} are integral over $A$.
  
  Conversely, take $g \in B$ arbitrary such that $b$ is integral over
  $A$. Then $g$, seen as an element of $B[t]$, is integral over $D$.
  Moreover, $Q(D) = Q(B[t])$ by the definition of $D$, so $g \in
  Q(D)$. It follows that $g \in \tilde{D}$, so there exists a
  polynomial $F$ such that $g = F\left(\overline{h_1} \upto
    \overline{h_r}\right)$. Applying $\phi$ yields
  \[
  g = \phi(g) = F\left(\overline{g_1} \upto \overline{g_r}\right).
  \]
  This completes the proof.
\end{proof}
\begin{remark} \label{1rNonNormal}
In \aref{3aIC} we have assumed that $B$ is normal. 
We will sketch how to deal with the more general case
where $B$ is a domain which need not be normal.
 Compute
the normalization $\widetilde{B}$ of $B$ using De Jong's algorithm
(see \mycite{deJong} or \mycite[Section~1.6]{Derksen:Kemper}).
Let $\widetilde{A}$ be the integral closure of $A$ in $\widetilde{B}$.
Generators of $\widetilde{A}$ can be computed using \aref{3aIC}.
Find $A$-module generators $h_1,\dots,h_s$ of $\widetilde{A}$. 
Define
$$
M=\{(a_1,\dots,a_s)\in B^{s}\mid \sum_{i=1}^s a_i h_i\in B\}.
$$
Find $g\in B \setminus \{0\}$ such that $gh_i\in B$ for all $i$.
We may identify $M$ with
\begin{multline*}
  \{(a_1,\dots,a_s,b)\in B^{s+1}\mid \sum_{i=1}^s a_i h_i+b=0 \}= \\
  \{(a_1,\dots,a_s,b)\in B^{s+1}\mid \sum_{i=1}^s a_i g h_i+bg=0 \}.
\end{multline*}
So $M$ can be viewed as a syzygy module, and generators of $M$ can be computed
using  \mycite[\S1.3]{Vasconcelos} or \mycite[\S1.3]{Derksen:Kemper}
(computing syzygies between elements $u_1+I,\dots,u_t+I$
in $B=K[x_1,\dots,x_n]/I$ can easily be reduced
to computing syzygies between $u_1,\dots,u_t$ and generators of $I$ in
the polynomial ring $K[x_1,\dots,x_n]$).
We have
$$
M\cap A^s=\{(a_1,\dots,a_s)\in A^s\mid \sum_{i=1}^s a_ih_i\in \widetilde{A}\cap B\}.
$$
Define $\phi:M\to B$ by $\phi(a_1,\dots,a_r)=\sum_{i=1}^s a_ih_i$. Then $\phi(M\cap A^s)=\widetilde{A}\cap B$ is the integral closure of $A$ in $B$.
Generators of $M\cap A^s$ can be computed, using Remark~\ref{2remmodintersect}.
\end{remark}

\section{Quasi-affine varieties and Hilbert's fourteenth problem}
\label{2sQuasiAffine}

This section provides some methods for dealing with non-finitely
generated algebras. 
\subsection{The colon operation} \label{2sColon}
For a subset $B$ of a ring, $B^r$ will denote the set of all products
of $r$ elements from $B$.
We generalize the notion of a colon ideal as follows.
\begin{defi}
For a commutative ring $S$ and subsets $A,B\subseteq S$ we define
$$
(A:B)_S=\{f\in S\mid fB\subseteq A\}
$$
and
$$
(A:B^{\infty})_S=\bigcup_{r=1}^\infty (A:B^r)_{S}=\{f\in S\mid \exists r\ 
fB^r\subseteq A\}.
$$
\end{defi}
\begin{example}
If ${\mathfrak a}$ and ${\mathfrak b}$ are ideals of $S$, then
$({\mathfrak a}:{\mathfrak b})_S$ and $({\mathfrak a}:{\mathfrak b}^\infty)_S$
are the usual colon ideals (see for example \mycite[Chapter~2]{Vasconcelos}).
\end{example}
If $R$ is a domain with quotient field $Q(R)$, and $f\in R\setminus\{0\}$ then
$$
(R:\{f\}^{\infty})_{Q(R)}=R_f,
$$
the localization of $R$ with respect to the element $f$.
This generalizes as follows. Suppose that $R=K[X]$ is the coordinate ring of an irreducible affine variety
$X$.
Let $Y\subseteq X$ be a the zero set of an ideal ${\mathfrak a}\subseteq R$.
The ring of regular functions on the
quasi-affine variety $U:=X\setminus Y$ is denoted by 
$K[U]$.
\begin{lemma} \label{2lQuasiAffine}
We have
$$
K[U]=(R:{\mathfrak a}^{\infty})_{Q(R)}.
$$
\end{lemma}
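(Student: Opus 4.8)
The plan is to prove the two inclusions $K[U] \subseteq (R:{\mathfrak a}^\infty)_{Q(R)}$ and $(R:{\mathfrak a}^\infty)_{Q(R)} \subseteq K[U]$ separately, working with the standard description of the ring of regular functions on the quasi-affine variety $U = X \setminus Y$. Recall that a function $\varphi$ on $U$ is regular at a point $x \in U$ if there exist $g,h \in R$ with $h(x) \neq 0$ and $\varphi = g/h$ on a neighborhood of $x$; and $K[U]$ consists of those $\varphi \in Q(R)$ that are regular at every point of $U$.

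First I would show $(R:{\mathfrak a}^\infty)_{Q(R)} \subseteq K[U]$. Take $\varphi \in Q(R)$ with $\varphi \cdot {\mathfrak a}^r \subseteq R$ for some $r$. Writing ${\mathfrak a} = (a_1 \upto a_s)$, the elements $a_1^r \upto a_s^r$ still generate an ideal with the same zero set $Y$ (since $V({\mathfrak a}^r) = V({\mathfrak a})$), and hence the open sets $X \setminus V(a_j^r)$, $j = 1 \upto s$, together with $Y$'s complement structure, cover $U$; more precisely, for each $x \in U$ some generator $a_i$ satisfies $a_i(x) \neq 0$, so $a_i^r(x) \neq 0$, and on the basic open set where $a_i^r$ is nonzero we have $\varphi = (\varphi a_i^r)/a_i^r$ with $\varphi a_i^r \in R$. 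Thus $\varphi$ is regular on $U$, i.e. $\varphi \in K[U]$.

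Next, for $K[U] \subseteq (R:{\mathfrak a}^\infty)_{Q(R)}$: let $\varphi \in K[U]$. For each $x \in U$ there is a representation $\varphi = g_x/h_x$ locally near $x$ with $h_x(x) \neq 0$. The "ideal of denominators" ${\mathfrak b} := \{ h \in R \mid h\varphi \in R \} \cup \{0\}$ is an ideal of $R$, and by the local regularity its zero set $V({\mathfrak b})$ is disjoint from $U$, hence $V({\mathfrak b}) \subseteq Y = V({\mathfrak a})$. By the Nullstellensatz (and here is where we use that $X$ is a variety over the algebraically closed field $K$, so $R$ is a finitely generated reduced $K$-algebra), ${\mathfrak a} \subseteq \sqrt{{\mathfrak a}} \subseteq \sqrt{{\mathfrak b}}$, so each generator $a_i$ of ${\mathfrak a}$ satisfies $a_i^{m_i} \in {\mathfrak b}$ for some $m_i$; taking $r$ large enough (e.g. $r \geq \sum_i m_i$) gives ${\mathfrak a}^r \subseteq {\mathfrak b}$, whence $\varphi \cdot {\mathfrak a}^r \subseteq R$ and $\varphi \in (R:{\mathfrak a}^\infty)_{Q(R)}$.

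The main obstacle — really the only subtle point — is the step showing $V({\mathfrak b}) \subseteq Y$, i.e. that a regular function on $U$ actually has "enough" denominators to cover all of $U$: one must argue carefully that for $x \in U$ the local denominator $h_x$ can be taken in $R$ with $h_x \varphi \in R$ globally (not just locally), which follows because $\varphi \in Q(R)$ is a single fixed element of the fraction field and if $\varphi = g_x/h_x$ holds on a dense open set then $h_x \varphi = g_x$ holds in $Q(R)$, hence $h_x \varphi \in R$. Everything else is a direct application of the Nullstellensatz and the definition of regular functions, so the proof is short.
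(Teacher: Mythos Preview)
Your proof is correct and follows the same two-inclusion structure as the paper. The inclusion $(R:{\mathfrak a}^\infty)_{Q(R)} \subseteq K[U]$ is handled identically. For the reverse inclusion, however, the paper takes a more direct route: instead of forming the ideal of denominators and invoking the Nullstellensatz, it simply observes that $K[U] \subseteq R_{a_i}$ for each generator $a_i$ of ${\mathfrak a}$ (since $U$ contains the principal open set $D(a_i)$ and regular functions on $D(a_i)$ are exactly $R_{a_i}$), so there exist exponents $l_i$ with $a_i^{l_i}\varphi \in R$, and then the same pigeonhole argument with $N = \sum l_i - s + 1$ gives ${\mathfrak a}^N \varphi \subseteq R$. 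This avoids the Nullstellensatz entirely, so the paper's argument does not require $K$ to be algebraically closed at this point, whereas your version does. Your approach is perfectly valid in the paper's setting, but the paper's is a touch cleaner and more general.
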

\begin{proof}
Suppose that $f\in (R:{\mathfrak a}^\infty)_{Q(R)}$ and $p\in U$.
There exists $h\in {\mathfrak a}$ with $h(p)\neq 0$. We have
$g=h^sf\in R$ for some nonnegative integer $s$. So $f=h^{-s}g$
is a regular function on an open neighborhood of $p\in U$. Since $p\in U$
was chosen arbitrarily, we conclude that $f\in K[U]$.

Conversely, suppose that $f\in K[U]$. We may write ${\mathfrak
a}=(a_1,\dots,a_r)$. Because $K[U]\subseteq R_{a_i}$ there exists
a nonnegative integer $l_i$ such that $a_i^{l_i}f\in R$ for all $i$.
Set $N=l_1+l_2+\cdots+l_r-r+1$. Then
$$
{\mathfrak a}^Nf\subseteq R,
$$
because ${\mathfrak a}^N$ is spanned by monomials
$a_1^{k_1}a_2^{k_2}\cdots a_r^{k_r}$ with $k_1+\cdots+k_r=N$
and the definition of $N$ implies that $k_i\geq l_i$ for some $i$.
\end{proof}
If $f\in {\mathfrak a}$ is nonzero, then we have $K[U]\subseteq R_f$ and
\begin{equation} \label{2eqRf}
  K[U]=(R:{\mathfrak a}^\infty)_{Q(R)}=(R:{\mathfrak a}^{\infty})_{R_f}.
\end{equation}
Note that such a ring of regular functions on a quasi-affine variety
is not always finitely generated over $K$ (see
\mycite[Chapter~V.5]{nag:b} or \mycite{winkelmann}). Rings of the form
$(R:{\mathfrak a}^\infty)_{Q(R)}$ are {\it ideal transforms\/} in the
sense of \mycite{nag:b}.  Suppose that $G$ is an algebraic group and
$X$ is an affine $G$-variety.  Nagata showed that the invariant ring
$K[X]^G$ may not be finitely generated~[\citenumber{nag:a}]. However,
he also showed that if $X$ is normal, then the invariant ring $K[X]^G$
is isomorphic to some {\it ideal transform\/} of a finitely generated
domain over $K$~[\citenumber{nag:b}, Chapter~V, Proposition~4]. In
other words, $K[X]^G$ can be viewed as $K[U]$ for some quasi-affine
variety $U$.  Later, we will study this in more detail.

The following lemma is easy to prove:
\begin{lemma}\ 
\begin{enumerate}
\item If ${\mathfrak a}$ is an ideal of the ring $S$,
and $B\subseteq S$ then
$({\mathfrak a}:B)_S$ and $({\mathfrak a}:B^\infty)_S$ are ideals
of $S$.
\item If $S$ is an algebra over some field,
$A\subseteq S$ is a subalgebra and $B\subseteq A$, then
$(A:B^\infty)_S$
is a subalgebra of $S$.
\end{enumerate}
\end{lemma}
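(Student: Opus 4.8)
The plan is to verify each of the two claims directly from the definitions, reducing both to routine checks that the relevant sets are closed under the required operations.

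For part (1), suppose $\mathfrak a$ is an ideal of $S$ and $B \subseteq S$. I would first show $(\mathfrak a : B)_S$ is an ideal: it clearly contains $0$, and if $f_1, f_2 \in (\mathfrak a : B)_S$ and $b \in B$, then $(f_1 + f_2) b = f_1 b + f_2 b \in \mathfrak a$ since $\mathfrak a$ is closed under addition; similarly for any $s \in S$, $(s f_1) b = s (f_1 b) \in \mathfrak a$ since $\mathfrak a$ is an ideal. Hence $(\mathfrak a : B)_S$ is an ideal. For $(\mathfrak a : B^\infty)_S = \bigcup_{r=1}^\infty (\mathfrak a : B^r)_S$, I would note that each $(\mathfrak a : B^r)_S$ is an ideal by the same argument (applied to the subset $B^r$), and that the chain is increasing: if $f B^r \subseteq \mathfrak a$ then $f B^{r+1} \subseteq f B^r \cdot B \subseteq \mathfrak a$ because $\mathfrak a B \subseteq \mathfrak a$. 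An increasing union of ideals is an ideal, so we are done. (Alternatively one can check the ideal axioms directly on the union, using that the exponents can be made common by passing to the larger one.)

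For part (2), suppose $S$ is an algebra over a field $K$, $A \subseteq S$ a subalgebra, and $B \subseteq A$. I would show $(A : B^\infty)_S$ is a subalgebra, i.e.\ it is a $K$-subspace containing $1$ and closed under multiplication. It contains $1$ since $1 \cdot B^r \subseteq B^r \subseteq A$ for any $r \ge 1$ (using $B \subseteq A$). For the $K$-subspace property: if $f_1 B^{r_1} \subseteq A$ and $f_2 B^{r_2} \subseteq A$, put $r = \max(r_1, r_2)$; then $(\lambda_1 f_1 + \lambda_2 f_2) B^r \subseteq A$ for $\lambda_i \in K$, since $f_i B^r \subseteq f_i B^{r_i} \cdot B^{r - r_i} \subseteq A \cdot A \subseteq A$ (here I use $B \subseteq A$ and that $A$ is closed under multiplication). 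For closure under multiplication: $(f_1 f_2) B^{r_1 + r_2} = f_1 f_2 B^{r_1} B^{r_2} \subseteq (f_1 B^{r_1})(f_2 B^{r_2}) \subseteq A \cdot A \subseteq A$, using commutativity of $S$ to rearrange factors. Hence $f_1 f_2 \in (A : B^\infty)_S$.

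I do not anticipate a genuine obstacle here; the statement is flagged as ``easy to prove'' and the only points requiring the least care are (a) remembering to use $B \subseteq A$ in part (2) so that products of elements of $B$ stay inside $A$, and (b) handling the differing exponents $r_1, r_2$ by replacing them with a common value, which works precisely because the chain $(A : B^r)_S$ is increasing — and that in turn relies on $B \subseteq A$ again (so $A B^{r+1} = A B^r B \subseteq A B \subseteq A$), whereas in part (1) the analogous monotonicity uses $\mathfrak a B \subseteq \mathfrak a$. The proof is otherwise a direct unwinding of definitions.
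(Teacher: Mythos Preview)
Your proof is correct and is exactly the routine verification one would expect; the paper in fact omits the proof entirely, stating only that the lemma ``is easy to prove.'' Your care with the monotonicity of the chain (using $\mathfrak a B\subseteq\mathfrak a$ in part~(1) and $B\subseteq A$ in part~(2)) and with combining exponents is appropriate and complete.
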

Suppose that the additive group $\Ga$ acts regularly
on an irreducible affine variety $X$. Then $\Ga$ also acts on the coordinate ring $S:=K[X]$. An algorithm for computing
the generators of the invariant ring $S^{\Ga}$ was given
by \mycite{essen}. Van den Essen first constructs a subalgebra $R$ of
the invariant ring, and an element $f\in R$ such that $S^{\Ga}=R_f\cap
S=(R:f^{\infty})_S$ (for details, see \sref{3lMu}). He then gives an algorithm for computing a set of
generators of the ring $S^{\Ga}=(R:f^{\infty})_S$ over $K$. The
algorithm terminates if this ring is finitely generated.

In this section we will give a generalization of Van den Essen's
algorithm for computing generators of $(R:f^{\infty})_S$. 
We will give an algorithm for computing generators of
the ring $(R:{\mathfrak a}^{\infty})_S$ for a finitely generated domain
$S$ over $K$, a finitely generated subalgebra $R$ and any ideal ${\mathfrak a}$ of $R$.
Our algorithm will terminate if and only if $(R:{\mathfrak a}^\infty)_S$
is finitely generated. This extension is quite useful, as it 
allows us to compute rings of regular
functions on  irreducible quasi-affine varieties by using~\eqref{2eqRf}.

Suppose that $S$ is a domain over a field $K$, 
$R$ is a finitely generated
subalgebra and ${\mathfrak a}\subseteq R$ is an ideal. Then
$(R:{\mathfrak a})_S$ is an $R$-module. Suppose that ${\mathfrak a}$ is nonzero.
Then we can choose a nonzero element $f\in {\mathfrak a}$.
From the definition it follows that
$f(R:{\mathfrak a})_S\subseteq R$. This way, we may identify
$(R:{\mathfrak a})_S$ as a submodule of $R$. In particular,
$(R:{\mathfrak a})_S$ is finitely generated as an $R$-module.
We will first give an algorithm for finding $R$-module generators
of $(R:{\mathfrak a})_S$.

\begin{con}
We assume that $S=K[x_1,\dots,x_n]/I$ where $I$ is a prime ideal
generated by a finite set ${\mathcal G}_I$.
\end{con}
\begin{alg}[Computation of $(R:{\mathfrak a})_S$]\mbox{}\label{alg4.7}
\begin{description}
\item[Input:] Polynomials $f_1,\dots,f_r\in K[x_1,\dots,x_n]$
such that $R$ is generated by $f_1+I,\dots,f_r+I$, and a finite set 
${\mathcal A}\subset K[y_1,y_2,\dots,y_r]$ such
that the (nonzero) ideal ${\mathfrak a}\subseteq R$ is generated by 
$g(f_1,\dots,f_r)+I$, 
$g\in {\mathcal A}$.
\item[Output:] A finite set ${\mathcal H}\subseteq K[x_1,\dots,x_n]$
such that $(R:{\mathfrak a})_S$ is generated by $1+I$
and all $h+I$, $h\in {\mathcal H}$
as an $R$-module. Moreover, if $(R:{\mathfrak a})_S=R$ then ${\mathcal H}=\emptyset$.

\end{description}
\begin{enumerate}
 \renewcommand{\theenumi}{\arabic{enumi}}
\item Let ${\mathfrak b}$ be the ideal in
  $K[x_1,\dots,x_n,y_1,\dots,y_r]$ generated by $I$ and all $y_i-f_i$,
  $i=1,\dots,r$. Compute a Gr\"obner basis ${\mathcal G}_J$ of
  $J:={\mathfrak b}\cap K[y_1,\dots,y_r]$.  (Choose an elimination
  ordering on the monomials of $K[x_1,\dots,x_n,\linebreak
  y_1,\dots,y_r]$ and compute a Gr\"obner basis ${\mathcal
    G}_{\mathfrak b}$ of ${\mathfrak b}$.  Then we have ${\mathcal
    G}_{J}={\mathcal G}_{\mathfrak b}\cap K[y_1,\dots,y_r]$.)
\item Choose $u\in {\mathcal A}$ such that 
$u\not\in J$. (Reduce all elements $u\in
{\mathcal A}$ with respect to the Gr\"obner basis ${\mathcal G}_{J}$
until we have found an element $u$ that
does not reduce to $0$.)
\item Let ${\mathfrak d}\subseteq K[y_1,\dots,y_r]$ be the ideal 
generated by $J$ and ${\mathcal A}$.
Compute a Gr\"obner basis ${\mathcal G}_{\mathfrak c}$
of the colon ideal ${\mathfrak c}:=(J+(u)):{\mathfrak d}$.
\item Let ${\mathfrak v}\subseteq K[x_1,\dots,x_n,y_1,\dots,y_r]$ be the 
ideal generated by $I$, $u$ and all $y_i-f_i$, $i=1,2,\dots,r$.
Compute a Gr\"obner basis ${\mathcal G}_{\mathfrak u}$
of ${\mathfrak u}:={\mathfrak v}\cap K[y_1,\dots,y_r]$.
\item Compute  a 
Gr\"obner basis ${\mathcal G}_{\mathfrak q}$ of the
intersection ${\mathfrak q}:={\mathfrak u}\cap {\mathfrak c}$.

\item Compute a Gr\"obner basis 
${\mathcal G}_{\mathfrak p}$ of the ideal ${\mathfrak p}:=J+(u)$
in $K[y_1,\dots,y_r]$.

\item Replace ${\mathcal G}_{\mathfrak q}$ by the subset of all elements
that do not reduce to $0$ with respect to the Gr\"obner basis ${\mathcal
G}_{\mathfrak p}$.

\item If ${\mathcal G}_{\mathfrak q}=\{v_1,\dots,v_s\}$, compute
$h_1,\dots,h_s\in K[x_1,\dots,x_n]$ such that
$$v_i(f_1,\dots,f_r)+I=u(f_1,\dots,f_r)h_i+I$$ 
for all $i$. 
To find $h_1,\dots,h_s$, proceed as follows.
Each $v_i$ can be expressed in the form
$$
v_i=\sum_{g\in {\mathcal G}_I}a_{i,g} g+b_iu+\sum_{j}c_{i,j}(y_j-f_j).
$$
with $a_{i,g},b_i,c_{i,j}\in K[x_1,\dots,x_n,y_1,\dots,y_r]$
for all $g,i,j$.
(this can be done using the extended Gr\"obner basis algorithm in step~(4)).
Then plug in $y_i=f_i$ for all $i$. We take 
$$h_i=b_i(x_1,\dots,x_n,f_1,\dots,f_r)
$$ 
for all
$i$. Set ${\mathcal H}=\{h_1,\dots,h_s\}$.
\end{enumerate}
\end{alg}
\begin{proof}[Proof of correctness of Algorithm~\ref{alg4.7}.]
Consider the ring homomorphism
$$
\phi:K[y_1,\dots,y_r]\to K[x_1,\dots,x_n]/I\cong S
$$
defined by $y_i\mapsto f_i+I$. The image of $\phi$ is isomorphic to $R$,
and the kernel of $\phi$ is $J$. So we have
$$
K[y_1,\dots,y_r]/J\cong R.
$$
The ideal ${\mathfrak a}\subseteq R$ is generated by all 
$\phi(g)$, $g\in {\mathcal A}$.
Since ${\mathfrak a}\subseteq R$ is a nonzero ideal, there must exist
a $u\in {\mathcal A}$ such that $\phi(u)\neq 0$. Hence
there exists a $u\in {\mathcal A}$ that does not reduce
to $0$ modulo ${\mathcal G}_J$.
The colon ideal $(\phi(u)R:{\mathfrak a})_R\subseteq R$
is equal to $\phi({\mathfrak c})$, and
$\phi^{-1}((\phi(u)R:{\mathfrak a})_R)={\mathfrak c}$.
The ideal ${\mathfrak u}$ is equal to $\phi^{-1}(\phi(u)S)$.
We have
$${\mathfrak q}=\phi^{-1}((\phi(u)R:{\mathfrak a})_R\cap \phi^{-1}(\phi(u)S)
=\phi^{-1}((\phi(u)R:{\mathfrak a})_R\cap \phi(u)S).
$$
Also, we get
$$
{\mathfrak p}=\phi^{-1}(\phi(u)R)=(u)+J.
$$
After step (7), ${\mathfrak q}$
is generated as an ideal in $R$ by ${\mathcal G}_{\mathfrak q}$, $u$ and $J$.
It follows that $(\phi(u) R:{\mathfrak a})_R\cap \phi(u)S$ is generated by $\phi(h),h\in {\mathcal G}_q$ and $\phi(u)$.

Since 
$$
\varphi(u)(R:{\mathfrak a})_S=(\varphi(u)R:{\mathfrak a})_R\cap \varphi(u)S,
$$
we have that $(R:{\mathfrak a})_S$ is generated as an $R$-module by
$1=\phi(u)/\phi(u)$ and all $\phi(v)/\phi(u)$, $v\in {\mathcal G}_q$.
If ${\mathcal G}_q=\{v_1,\dots,v_s\}$ then
$$
\phi(v_i)=\phi(u)(h_i+I)
$$
for all $i$.
Since ${\mathcal H}=\{h_1,\dots,h_s\}$ we have
that $(R:{\mathfrak a})_S$ is generated by all $1+I$ and all $h+I$, $h\in
{\mathcal H}$.

By step (6) and (7) we have that $\phi(v_i)\not\in \phi(u)R$,
and $h_i+I\not\in R$.
Hence, if $(R:{\mathfrak a})_S=R$ then ${\mathcal H}=\emptyset$.
\end{proof}
\begin{alg}[Computation of $(R:{\mathfrak a}^\infty)_S$]\label{alg4.8}\mbox{}
\begin{description}
\item[Input:] Polynomials $f_1,\dots,f_r\in K[x_1,\dots,x_n]$
such that $R$ is generated by $f_1+I,\dots,f_r+I$, and a finite set 
${\mathcal A}\subset K[y_1,y_2,\dots,y_r]$ such
that the (nonzero) ideal ${\mathfrak a}\subseteq R$ is generated by 
$g(f_1,\dots,f_r)+I$, 
$g\in {\mathcal A}$.
\item[Output:] A (possibly infinite) sequence
$h_1,h_2,h_3,\dots$ of elements
in \linebreak $K[x_1,\dots,x_n]$ such that $h_1+I,h_2+I,\dots$
generate $(R:{\mathfrak a}^\infty)_S$ as a $K$-algebra. If
$(R:{\mathfrak a}^\infty)$ is finitely generated, then
the algorithm will terminate after finite time and 
the output will be a finite sequence.
\end{description}
\begin{enumerate}
 \renewcommand{\theenumi}{\arabic{enumi}}
 \item $F=\emptyset$
 \item ${\mathcal H}=\{f_1,\dots,f_r\}$
 \item {\tt while} ${\mathcal H}\neq \emptyset$ {\tt do}
 \item ${\tt output}(\mathcal H)$
 \item $F:=F\cup {\mathcal H}$.
 \item Let ${\mathcal H}$ be the output of Algorithm~\ref{alg4.7}
 for the computation of $(\widetilde{R}:{\mathfrak a})_S$,
 where $\widetilde{R}$ is the algebra generated by all
 $f+I$, $f\in F$ and  
 ${\mathfrak a}$ is the ideal in $\widetilde{R}$
 generated by all $g(f_1,\dots,f_r)+I$, $g\in {\mathcal A}$.
 \item {\tt enddo}
 \end{enumerate}
 \end{alg}

\begin{proof}[Proof of correctness of Algorithm~\ref{alg4.8}.]
Let $\widetilde{R}_i$ be the algebra $\widetilde{R}$ in step~(6)
in the $i$-th iteration of the {\tt while} loop 
in lines (3)--(7). We have $\widetilde{R}_1=R$ and 
$$
\widetilde{R}_{i+1}\supseteq(\widetilde{R}_{i}:{\mathfrak a}\widetilde{R}_i)_S=
(\widetilde{R}_i:{\mathfrak a})_S.
$$
where ${\mathfrak a}$ is the ideal in $R$ generated by all $g(f_1,\dots,f_r)+I$,
$g\in {\mathcal A}$. It easily follows by induction that
$\widetilde{R}_{i+1}\supseteq(R:{\mathfrak a}^i)_S$ for all $i$.
 Note that in step~(6), the algebra $\widetilde{R}_i$
is generated by all $h+I$ with $h\in F$. Moreover, $F$ is exactly
the set of all polynomials that have been sent to the output.

If the algorithm does not terminate, then we have
$$
\widetilde{R}_1\subseteq \widetilde{R}_2\subseteq \cdots
$$
and
$$
(R:{\mathfrak a}^\infty)_S=
\bigcup_{i=1}^\infty (R:{\mathfrak a}^i)_S\subseteq\bigcup_{i=1}^\infty 
\widetilde{R}_i.
$$
On the other hand it is easy to see (by induction) 
that $\widetilde{R}_i\subseteq (R:{\mathfrak a}^\infty)_S$
for all $i$. It follows that
\begin{equation}\label{unionofrings}
(R:{\mathfrak a}^\infty)_S=\bigcup_i \widetilde{R}_i.
\end{equation}
If the output is $h_1,h_2,\dots$ then the
algebra generated by $h_1+I,h_2+I,\dots$ contains $\widetilde{R}_i$
for all $i$. Therefore, the algebra generated by $h_1+I,h_2+I,\dots$
is $(R:{\mathfrak a}^\infty)_S$.

Suppose that $(R:{\mathfrak a}^\infty)_S$ is finitely generated.
    %then it is generated by $(R:{\mathfrak a}^i)_S$ for some $i$.
%In particular, $\widetilde{R}_i=(R:{\mathfrak a}^\infty)$.
By (\ref{unionofrings}),  $\widetilde{R}_i$ contains all generators of $(R:{\mathfrak a}^\infty)_S$ for some $i$,
and $\widetilde{R}_i=(R:{\mathfrak a}^\infty)_S$.
But then ${\mathcal H}=\emptyset$ after the $i$-th iteration
of the while loop and the algorithm terminates.
The output is exactly $F$ and $\widetilde{R}_i=(R:{\mathfrak a}^\infty)_S$
is generated by all $h+I$, $h\in F$.
\end{proof}
%\begin{remark}
%Suppose that $X$ is an affine $G$-variety where $G$ is a reductive group
%and $X$ is not normal.
%Let $R=K[X]$ be the coordinate ring. Suppose that $X$ is not normal, i.e.,
%$R$ is not integrally closed. One can use Algorithm~{} to compute generators
%of the invariant ring $R^G$. It is also possible to reduce to the normal case
%and apply Algorithm~\ref{} as follows. First, compute the integral closure
%$\widetilde{R}=K[\widetilde{X}]$ of $R$ (here $\widetilde{X}$ is the normalization
%of $X$). Generators of  $\widetilde{R}^G$ can be computed using Algorithm~\ref{}.
%Now we have that $R^G=R\cap \widetilde{R}^G$, but it is not immediately 
%clear how to compute this intersection. We can use the colon construction for this.
%Consider the ideal ${\mathfrak n}\subseteq R$ defined by
%$$
%{\mathfrak n}=(R:\widetilde{R}^G)_{\widetilde{R}}.
%$$
%One can compute $R$-module generators $f_1,f_2,\dots,f_r$ of $R^GR$.
%Then one can compute an element $h\in R$ such that $hf_1,\dots,hf_r\in R$.
%Then we have
%$$
%{\mathfrak n}=(R:\{f_1,\dots,f_r\})_{\widetilde{R}}=
%((h):(hf_1,\dots,hf_r))_{R}.
%$$
%Therefore, we can compute generators of the ideal ${\mathfrak n}$ because
%it can be seen as a colon ideal.
%
%We now claim that ${\mathfrak n}^G\neq (0)$. Because $\widetilde{R}$ is
%a finite $R$-module, we have that $\widetilde{R}^G$ is a finite $R^G$ module.
%In particular, there e
%
%\end{remark}
  \subsection{Finite generation}
  In this section we study domains which are not finitely generated over $K$. We introduce
  the {\it finite generation locus ideal\/} of such an algebra.
\begin{prop}\label{6ploc}
Suppose that $S$ is a domain which is finitely generated over a field $K$
and that $R$ is a subalgebra of $S$. Then there exists an nonzero element $f\in R$
such that $R_f$ is finitely generated as a $K$-algebra.
\end{prop}
\begin{proof}
Choose a finitely generated subalgebra $T\subseteq R$ such that $T$ and $R$
have the same quotient field. By the theorem of generic freeness  (see \mycite[Theorem~14.4]{eis}
or \rref{2rGenFree} below), there exists
a nonzero element $f\in T$ such that $S_f$ is a free $T_f$-module. Let
%$e_1,e_2,e_3,\dots$
$B$ be a basis of $S_f$ over $T_f$. We can write
$$
1=\sum_{i=1}^r u_ie_i
$$
with $e_1,e_2 \upto e_r \in B$ and $u_1,u_2,\dots,u_r\in
T_f$.
Since $R_f$ and $T_f$ have the same quotient field, it follows that the submodule $R_f\subseteq S_f$
is contained in
$$
T_fe_1\oplus T_fe_2\oplus \cdots \oplus T_fe_r\cong T_f^r.
$$
This shows that $R_f$ is contained in a finitely generated $T_f$-module. Since $T_f$ is a finitely generated algebra, $R_f$ is finitely generated as a $T_f$-module. It follows
that $R_f$ is a finitely generated algebra.
\end{proof}
The following result is well-known. We give a proof for the reader's
convenience.

\begin{prop} \label{propfg}
  Suppose that $R$ is a domain over $K$ and $f,g \in R \setminus
  \{0\}$ such that $(f,g) = R$. If $R_f$ and $R_g$ are finitely
  generated, then so is $R$, and $R = R_f \cap R_g$.
\end{prop}

\begin{proof}
  We may write $R_f = K[a_1 \upto a_r,f^{-1}]$ and $R_g = K[b_1 \upto
  b_l,g^{-1}]$ with $a_i,b_j \in R$. We have $1 = x f + y g$ with $x,y
  \in R$. Take $z \in R_f \cap R_g$. Then
  \[
  z = \frac{a}{f^m} = \frac{b}{g^n} \quad \text{with} \quad n,m \in
  \NN,\ a \in K[a_1 \upto a_r,f],\ \text{and} \ b \in K[b_1 \upto
  b_l,g],
  \]
  so
  \begin{multline*}
    z = z (x f + y g)^{m+n} = \\
    \sum_{i=1}^m \binom{m+n}{i} (x f)^i y^{m+n-i} g^{m-i} b +
    \sum_{i=m+1}^{m+n} \binom{m+n}{i} x^i f^{i-m} (y g)^{m+n-i} a
  \end{multline*}
  Thus
  \[
  R \subseteq R_f \cap R_g \subseteq K[a_1 \upto a_r,b_1 \upto
  b_l,f,g,x,y] \subseteq R,
  \]
  and the result follows.
\end{proof}

%The following result is well-known (see for Example, the definition of {\it morphisms
%of finite type} and Exercise 3.3 (c) of \mycite{hart} applied to the situation where
%$Y=\Spec(K)$ and $X=\Spec(R)$).
%\begin{prop}\label{propfg}
%Suppose that $R$ is a domain over $K$, and $f_1,f_2,\dots,f_r\in R$ such that
%$(f_1,f_2,\dots,f_r)=R$ and $R_{f_i}$ is finitely generated for $i=1,2,\dots,r$.
%Then $R$ is a finitely generated $K$-algebra and
%$$
%R=R_{f_1}\cap R_{f_2}\cap \cdots\cap R_{f_r}.
%$$
%\end{prop}
\begin{prop}
For a domain $R$ defined over a field $K$, define 
$$
{\mathfrak g}=\{0\}\cup \{f\in R\setminus\{0\}\mid R_f\mbox{ is a finitely generated $K$-algebra}\}.
$$
Then ${\mathfrak g}$ is a radical ideal of $R$.
\end{prop}
\begin{proof}
If $f\in {\mathfrak g}$ and $g\in R$ are both nonzero, then
$$
R_{fg}=(R_f)_g
$$
is finitely generated, because $R_f$ is finitely generated. This implies $fg\in {\mathfrak g}$.

Suppose $f,g\in {\mathfrak g}$ such that $f$, $g$, and $f+g$ are all
non-zero.
We have $(f,g)R_{f+g} = R_{f+g}$, and the algebras
$(R_{f+g})_f=(R_f)_{f+g}$ and $(R_{f+g})_g=(R_g)_{f+g}$ are finitely generated.
By Proposition~\ref{propfg}, $R_{f+g}$ is finitely generated, so
$f+g\in {\mathfrak g}$. It follows that $\mathfrak g$ is an ideal.

The ideal ${\mathfrak g}$  is clearly a radical ideal 
since $R_{f^r}=R_f$ for every $f\in R$ and any positive integer $r$.
\end{proof}
We will call ${\mathfrak g}$ the {\it finite generation locus ideal\/} of $R$.
Note that ${\mathfrak g}=R$ if and only if $R$ is finitely generated.
If $R$ is a subalgebra of a finitely generated algebra, then the
finite generation locus ideal is nonzero by Proposition~\ref{6ploc}.

\begin{lemma}\label{6lcolon}
Suppose that $S$ is a domain over $K$, $R$ is a subalgebra,
and ${\mathfrak a}\subseteq R$ is an ideal.
Set ${\mathfrak b}=(R:(R:{\mathfrak a})_S)_S$. Then ${\mathfrak b}$ is an ideal of $R$, and
${\mathfrak a}\subseteq {\mathfrak b}$. Moreover,
$$
(R:{\mathfrak a}^i)_S=(R:{\mathfrak b}^i)_S
$$
for $i \in \NN \cup \{\infty\}$.
\end{lemma}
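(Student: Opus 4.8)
The plan is to first dispense with the easy structural claims and then prove the equality of the colon ideals by a symmetric pair of inclusions, with the crux being the "hard" direction $(R:{\mathfrak a}^i)_S \subseteq (R:{\mathfrak b}^i)_S$. For the easy parts: $(R:{\mathfrak a})_S$ is an $R$-submodule of $S$ (immediate from the definition, since $R$ is a subalgebra), so by part~(1) of the ``easy'' lemma applied with the subset $B = (R:{\mathfrak a})_S$, the set ${\mathfrak b} = (R:(R:{\mathfrak a})_S)_S$ is an ideal of $R$. The inclusion ${\mathfrak a} \subseteq {\mathfrak b}$ is just the statement that ${\mathfrak a} \cdot (R:{\mathfrak a})_S \subseteq R$, which holds by the very definition of $(R:{\mathfrak a})_S$. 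Combined with ${\mathfrak a}\subseteq{\mathfrak b}\subseteq R$, this already gives one of the two inclusions for free: ${\mathfrak b}^i \supseteq {\mathfrak a}^i$ forces $(R:{\mathfrak b}^i)_S \subseteq (R:{\mathfrak a}^i)_S$ for every $i$, and the same for $i = \infty$ by taking unions.

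For the reverse inclusion $(R:{\mathfrak a}^i)_S \subseteq (R:{\mathfrak b}^i)_S$, I would first treat $i = 1$ and then bootstrap. Take $f \in (R:{\mathfrak a})_S$; I must show $f{\mathfrak b} \subseteq R$, i.e. $fb \in R$ for every $b \in {\mathfrak b}$. But $b \in {\mathfrak b} = (R:(R:{\mathfrak a})_S)_S$ means precisely that $b \cdot (R:{\mathfrak a})_S \subseteq R$, and since $f \in (R:{\mathfrak a})_S$ this gives $bf \in R$. Hence $(R:{\mathfrak a})_S \subseteq (R:{\mathfrak b})_S$; together with the trivial reverse containment this is the case $i=1$. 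For general finite $i$, I would argue by induction: writing ${\mathfrak a}^i = {\mathfrak a}\cdot{\mathfrak a}^{i-1}$, an element $f$ with $f{\mathfrak a}^i \subseteq R$ satisfies $(f{\mathfrak a}^{i-1}){\mathfrak a}\subseteq R$; one then wants to deduce $(f{\mathfrak b}^{i-1}){\mathfrak b}\subseteq R$. The cleanest route is probably to show directly that $(R:{\mathfrak a}^i)_S = (R : {\mathfrak b}\,{\mathfrak a}^{i-1})_S = \cdots = (R:{\mathfrak b}^i)_S$ by peeling off one factor at a time: the identity $(R:{\mathfrak c}\,{\mathfrak d})_S = \bigl((R:{\mathfrak c})_S : {\mathfrak d}\bigr)_{?}$ requires a little care because $(R:{\mathfrak c})_S$ need not be contained in $R$, so instead I would exploit that for any ideal ${\mathfrak c}\subseteq R$, $(R:{\mathfrak c}\,{\mathfrak a})_S = (R:{\mathfrak c})_S$ whenever ${\mathfrak a}\subseteq{\mathfrak b}$ and $(R:{\mathfrak c})_S\subseteq(R:{\mathfrak a})_S$ --- iterating the $i=1$ argument in the module $(R:{\mathfrak c})_S$. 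Finally the case $i = \infty$ follows by taking the union over all finite $i$ on both sides, using $(R:{\mathfrak a}^\infty)_S = \bigcup_{i}(R:{\mathfrak a}^i)_S$ and likewise for ${\mathfrak b}$.

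The main obstacle I anticipate is the inductive step for finite $i$: one must be careful that the colon modules $(R:{\mathfrak a}^{i-1})_S$ live in $S$, not in $R$, so the naive ``associativity of colon'' manipulation does not quite apply off the shelf and has to be re-derived in the form needed. Once the correct bookkeeping identity is set up --- essentially, that enlarging ${\mathfrak a}$ to ${\mathfrak b}$ inside any single colon factor does not change the result, because every element of ${\mathfrak b}$ already multiplies $(R:{\mathfrak a})_S$ back into $R$ --- the rest is a routine induction, and the $\infty$ case is a formal consequence.
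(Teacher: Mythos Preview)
Your proposal is correct and follows essentially the same route as the paper: prove ${\mathfrak a}\subseteq{\mathfrak b}\subseteq R$ directly from the definitions, establish $(R:{\mathfrak a})_S=(R:{\mathfrak b})_S$ by the two inclusions you describe, and then induct on~$i$ by peeling off one factor at a time, with $i=\infty$ obtained as the union. Your worry about the ``associativity'' identity is unfounded---the paper uses $(R:{\mathfrak c}{\mathfrak d})_S=\bigl((R:{\mathfrak c})_S:{\mathfrak d}\bigr)_S$ freely, and it holds for arbitrary subsets of $S$ straight from the definition, so the inductive step is the one-line chain $(R:{\mathfrak a}^i)_S=((R:{\mathfrak a})_S:{\mathfrak a}^{i-1})_S=((R:{\mathfrak b})_S:{\mathfrak a}^{i-1})_S=((R:{\mathfrak a}^{i-1})_S:{\mathfrak b})_S=((R:{\mathfrak b}^{i-1})_S:{\mathfrak b})_S=(R:{\mathfrak b}^i)_S$.
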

\begin{proof}
Since ${\mathfrak a}(R:{\mathfrak a})_S\subseteq R$ by definition
of $(R:{\mathfrak a})_S$ we get ${\mathfrak a}\subseteq {\mathfrak b}:=(R:(R:{\mathfrak a})_S)_S$.
Since $1\in (R: {\mathfrak a})_S$ we get ${\mathfrak b}=(R:(R:{\mathfrak a})_S)_S\subseteq
(R:\{1\})_S=R$. Also, ${\mathfrak b}$ is clearly an $R$-module, so it is an ideal of
$R$.
%In fact, if ${\mathfrak a}\neq (0)$, choose an element $f\in {\mathfrak a}$. Then we have
%$$
%{\mathfrak b}=((f):f(R:{\mathfrak a})_S)_S$$ 
%and $f(R:{\mathfrak a})_S$ is an ideal of $R$.
%This shows that ${\mathfrak b}$ can be computed as a colon ideal.
%
Since ${\mathfrak a}\subseteq {\mathfrak b}$ we  have 
$$
(R:{\mathfrak a})_S\supseteq (R:{\mathfrak b})_S.
$$
Because ${\mathfrak b}=(R:(R:{\mathfrak a})_S)_S$,
we get ${\mathfrak b}(R:{\mathfrak a})_S\subseteq R$.
From this it follows that
$$
(R:{\mathfrak a})_S\subseteq (R:{\mathfrak b})_S.
$$
We conclude that
$$
(R:{\mathfrak a})_S=(R:{\mathfrak b})_S.
$$
By induction on $i$ we prove that
$$
(R:{\mathfrak a}^i)_S=(R:{\mathfrak b}^i)_S.
$$
The case $i=1$ has already been done. Suppose that $i>1$.
Then we have
\begin{multline*}
  (R:{\mathfrak a}^i)_S=((R:{\mathfrak a})_S:{\mathfrak a}^{i-1})_S=
  ((R:{\mathfrak b})_S:{\mathfrak a}^{i-1})_S= \\
  (R:{\mathfrak b}{\mathfrak a}^{i-1})_S= ((R:{\mathfrak
    a}^{i-1})_S:{\mathfrak b})_S.
\end{multline*}
By induction we may assume that $(R:{\mathfrak a}^{i-1})_S=(R:{\mathfrak b}^{i-1})_S$.
So we get
$$
(R:{\mathfrak a}^i)_S=((R:{\mathfrak a}^{i-1})_S:{\mathfrak b})_S=
((R:{\mathfrak b}^{i-1})_S:{\mathfrak b})_S=
(R:{\mathfrak b}^i)_S.
$$
We also have
$$
(R:{\mathfrak a}^\infty)_S=\bigcup_i (R:{\mathfrak a}^i)_S=\bigcup_i(R:{\mathfrak b}^i)_S=
(R:{\mathfrak b}^\infty)_S
$$
\end{proof}
 \begin{lemma}\label{6lfgli}
 Suppose that $R$ is a finitely generated subalgebra of a domain $S$ over a field $K$, ${\mathfrak a}$
 is an ideal of $R$ and
 suppose that $\widetilde{R}=(R:{\mathfrak
   a}^\infty)_S=\bigcup_i\widetilde{R}_i$, where
 $$
 R \subseteq \widetilde{R}_1\subseteq \widetilde{R}_2\subseteq \cdots
 $$
 is a sequence of finitely generated $K$-algebras.
 Define the ideal ${\mathfrak g}_i$ of $\widetilde{R}_i$ by
 $$
 {\mathfrak g}_i=\sqrt{(\widetilde{R}_i:(\widetilde{R}_i:{\mathfrak a})_S)_S},
 $$
 where the radical ideal is taken in $\widetilde{R}_i$.
 Then we have
 $$
 {\mathfrak g}_1\subseteq {\mathfrak g}_2\subseteq \cdots
 $$
 and
  $$
{\mathfrak g}:= \bigcup_i {\mathfrak g}_i
 $$
is the finite generation locus ideal of $\widetilde{R}$.
 \end{lemma}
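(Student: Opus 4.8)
The plan is to give a localization criterion for membership in each $\mathfrak{g}_i$ and to read off from it both the chain property and the identification with the finite generation locus ideal of $\widetilde{R}$. Note that $R$ and all $\widetilde{R}_i$ are finitely generated over $K$, hence Noetherian, and that $\widetilde{R}$, being a subalgebra of the domain $S$, is a domain, so its finite generation locus ideal is defined by the preceding proposition; we may assume $\mathfrak{a}\neq 0$ (if $\mathfrak{a}=0$ then $\widetilde{R}=S$ is finitely generated, some $\widetilde{R}_{i_0}$ already contains a generating set of $S$, so $\widetilde{R}_{i_0}=S$ and $\mathfrak{g}_{i_0}=S$, and the assertion is immediate). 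First I would record the preliminary fact that enlarging $R$ within $\widetilde{R}$ does not change the relevant colons: for every finitely generated subalgebra $A$ with $R\subseteq A\subseteq\widetilde{R}$ one has $(A:\mathfrak{a}^\infty)_S=\widetilde{R}$ and in particular $(A:\mathfrak{a})_S\subseteq\widetilde{R}$. The inclusion $\supseteq$ is clear from $R\subseteq A$; for $\subseteq$, write $\mathfrak{a}^r=(a_1,\dots,a_p)$ using that $R$ is Noetherian, observe that $y\mathfrak{a}^r\subseteq A\subseteq\widetilde{R}=(R:\mathfrak{a}^\infty)_S$ forces each $ya_j\mathfrak{a}^{t_j}\subseteq R$, and conclude $y\mathfrak{a}^{r+t}=\sum_j ya_j\mathfrak{a}^{t}\subseteq R$ with $t:=\max_j t_j$. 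Applied with $A=\widetilde{R}_i$, this also shows $(\widetilde{R}_i:\mathfrak{a})_S$ is a finitely generated $\widetilde{R}_i$-module (embed it into $\widetilde{R}_i$ by multiplying with a nonzero element of $\mathfrak{a}$, then use Noetherianity).

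Next I would set $\mathfrak{b}_i:=(\widetilde{R}_i:(\widetilde{R}_i:\mathfrak{a})_S)_S$, so $\mathfrak{g}_i=\sqrt{\mathfrak{b}_i}$ (radical in $\widetilde{R}_i$), and invoke \lref{6lcolon} for $\widetilde{R}_i\subseteq S$ and the ideal of $\widetilde{R}_i$ generated by $\mathfrak{a}$ (which leaves $(\widetilde{R}_i:\mathfrak{a})_S$ unchanged): $\mathfrak{b}_i$ is an ideal of $\widetilde{R}_i$ with $\mathfrak{a}\subseteq\mathfrak{b}_i$, and $(\widetilde{R}_i:\mathfrak{a}^s)_S=(\widetilde{R}_i:\mathfrak{b}_i^s)_S$ for all $s$. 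The key claim is then: for $f\in\mathfrak{g}_i\setminus\{0\}$ one has $\widetilde{R}_f=(\widetilde{R}_i)_f$, hence $\widetilde{R}_f$ is finitely generated. To prove it, replace $f$ by a power so that $f\in\mathfrak{b}_i$; since $(\widetilde{R}_i)_f\subseteq\widetilde{R}_f$ is trivial, it suffices to check $\widetilde{R}\subseteq(\widetilde{R}_i)_f$. Given $x\in\widetilde{R}$ there is $s$ with $x\mathfrak{a}^s\subseteq R\subseteq\widetilde{R}_i$, i.e. $x\in(\widetilde{R}_i:\mathfrak{a}^s)_S=(\widetilde{R}_i:\mathfrak{b}_i^s)_S$; as $f^s\in\mathfrak{b}_i^s$ this gives $xf^s\in\widetilde{R}_i$, so $x\in(\widetilde{R}_i)_f$.

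From the claim the rest follows. For monotonicity: if $f\in\mathfrak{g}_i$ then $\widetilde{R}\subseteq(\widetilde{R}_i)_f\subseteq(\widetilde{R}_{i+1})_f$, so the finitely many $\widetilde{R}_{i+1}$-module generators of $(\widetilde{R}_{i+1}:\mathfrak{a})_S$, which lie in $\widetilde{R}$ by the preliminary fact, are carried into $\widetilde{R}_{i+1}$ by a single power $f^m$; hence $f^m(\widetilde{R}_{i+1}:\mathfrak{a})_S\subseteq\widetilde{R}_{i+1}$, i.e. $f^m\in\mathfrak{b}_{i+1}$, and $f\in\widetilde{R}_{i+1}$ (as $\mathfrak{g}_i\subseteq\widetilde{R}_i\subseteq\widetilde{R}_{i+1}$), so $f\in\sqrt{\mathfrak{b}_{i+1}}=\mathfrak{g}_{i+1}$. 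Consequently $\mathfrak{g}:=\bigcup_i\mathfrak{g}_i$ is an ideal of $\widetilde{R}$ (closure under addition and under multiplication by $\widetilde{R}$ each reduce to a single $\mathfrak{g}_j$), and the claim gives $\mathfrak{g}\subseteq\{0\}\cup\{f\neq 0\mid\widetilde{R}_f\text{ finitely generated}\}$. For the reverse inclusion, take $0\neq f\in\widetilde{R}$ with $\widetilde{R}_f$ finitely generated; writing a finite $K$-algebra generating set of $\widetilde{R}_f$ with common denominator a power of $f$ and numerators in $\widetilde{R}=\bigcup_j\widetilde{R}_j$, I obtain $j$ with $f\in\widetilde{R}_j$ and $\widetilde{R}_f\subseteq(\widetilde{R}_j)_f$, whence $(\widetilde{R}_j)_f=\widetilde{R}_f$; then the module generators of $(\widetilde{R}_j:\mathfrak{a})_S\subseteq\widetilde{R}\subseteq(\widetilde{R}_j)_f$ are cleared into $\widetilde{R}_j$ by some $f^m$, so $f^m\in\mathfrak{b}_j$ and $f\in\mathfrak{g}_j\subseteq\mathfrak{g}$. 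Hence $\mathfrak{g}$ is precisely the finite generation locus ideal of $\widetilde{R}$.

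The step I expect to be the main obstacle is the key claim $\widetilde{R}_f=(\widetilde{R}_i)_f$ for $f\in\mathfrak{g}_i$: translating ``$f$ lies in the radical of the conductor ideal $(\widetilde{R}_i:(\widetilde{R}_i:\mathfrak{a})_S)_S$'' into ``inverting $f$ in $\widetilde{R}_i$ already yields all of $\widetilde{R}$'' is exactly where one must use \lref{6lcolon} to replace $\mathfrak{a}$ by the larger ideal $\mathfrak{b}_i$ — of which $f$ is a member — without altering the colon powers. A secondary, Noetherian-flavoured subtlety is the preliminary fact that passing from $R$ to the larger finitely generated $\widetilde{R}_i$ inside $\widetilde{R}$ does not enlarge $(R:\mathfrak{a}^\infty)_S$.
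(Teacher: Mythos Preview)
Your proof is correct and follows essentially the same approach as the paper: both hinge on \lref{6lcolon} to pass from $\mathfrak{a}$ to $\mathfrak{b}_i=(\widetilde{R}_i:(\widetilde{R}_i:\mathfrak{a})_S)_S$, use that $(\widetilde{R}_{i+1}:\mathfrak{a})_S$ is a finitely generated $\widetilde{R}_{i+1}$-module contained in $\widetilde{R}$, and identify $\widetilde{R}_f$ with $(\widetilde{R}_i)_f$ for $f\in\mathfrak{g}_i$. The only organizational difference is that the paper packages your ``key claim'' as the single identity $\widetilde{R}=(\widetilde{R}_i:\mathfrak{g}_i^\infty)_S$ and proves monotonicity for the whole ideal at once via $\mathfrak{g}_i^l\subseteq\mathfrak{h}_{i+1}$, whereas you argue element by element; your explicit treatment of the preliminary fact $(A:\mathfrak{a}^\infty)_S=\widetilde{R}$ and of the case $\mathfrak{a}=0$ is a welcome addition.
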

 \begin{proof}
 Let us define ${\mathfrak h}_i=(\widetilde{R}_i:(\widetilde{R}_i:{\mathfrak a})_S)_S$ so
 that ${\mathfrak g}_i=\sqrt{{\mathfrak h}_i} $.
Note that 
\begin{equation} \label{2eqRi}
  \widetilde{R}=(R:{\mathfrak a}^\infty)_S=(\widetilde{R}_i:{\mathfrak
    a}^\infty)_S= (\widetilde{R}_i:{\mathfrak
    h}_i^\infty)_S=(\widetilde{R}_i:{\mathfrak g}_i^\infty)_S
\end{equation}
 by Lemma~\ref{6lcolon}. Let $u_1,u_2,\dots,u_t$ be
 generators of the $\widetilde{R}_{i+1}$-module 
 $(\widetilde{R}_{i+1}:{\mathfrak a})_S$.
 This module is contained in
 $\widetilde{R}=(\widetilde{R}_i:{\mathfrak g}_i^\infty)_S$.
 Therefore, there  exists a positive integer $l$ such that
 $$
 {\mathfrak g}_i^lu_j\subseteq \widetilde{R}_i
 $$
 for all $j$. It follows that
  $$
 {\mathfrak g}_i^l(\widetilde{R}_{i+1}:{\mathfrak a})_S\subseteq \widetilde{R}_{i+1}
 $$
 and
 $$
 {\mathfrak g}_i^l\subseteq (\widetilde{R}_{i+1}:(\widetilde{R}_{i+1}:{\mathfrak a})_S)_S=
 {\mathfrak h}_{i+1}.
 $$
 Taking radicals  on both sides gives us
 $$
 {\mathfrak g}_i\subseteq \sqrt{{\mathfrak h}_{i+1}}={\mathfrak g}_{i+1}.
 $$

We now show that ${\mathfrak g}=\bigcup_{i} {\mathfrak g}_i$ is the
finite generation locus ideal of $\widetilde{R}$.
 If $f\in {\mathfrak g} \setminus
\{0\}$, then $f\in {\mathfrak g}_i$ for some $i$.
We have
  $$\widetilde{R}= (\widetilde{R}_i:{\mathfrak g}_i^\infty)_S\subseteq (\widetilde{R}_i)_f,
 $$
 because $f\in {\mathfrak g}_i$.
 It follows that
 $$
 \widetilde{R}_f=(\widetilde{R}_i)_f
 $$
 is finitely generated.
 
 Conversely, suppose that $\widetilde{R}_f$ is finitely generated for
 some $f\in \widetilde{R} \setminus \{0\}$.
 Say, $\widetilde{R}_f$ is generated over $K$ by $h_1,h_2,\dots,h_r\in \widetilde{R}$ and $1/f$.
 For some $i$, we have $f,h_1,h_2,\dots,h_r\in \widetilde{R}_i$.
 Therefore, we get
 $$
 \widetilde{R}\subseteq (K[f,h_1,\dots,h_r]:f^\infty)_S\subseteq (\widetilde{R}_i)_f
$$
Since $(\widetilde{R}_i:{\mathfrak a})_S$ is a finitely generated
$\widetilde{R}_i$-module, there exists a positive integer $l$ such that
$$
f^l(\widetilde{R}_i:{\mathfrak a})_S\subseteq \widetilde{R}_i.
$$
We see that
$$
f^l\in (\widetilde{R}_i:(\widetilde{R}_i:{\mathfrak a})_S)_S={\mathfrak h}_i.
$$
and $f\in {\mathfrak g}_i$.
\end{proof}

Using Lemma~\ref{6lfgli}, it is now possible to find generators of the
finite generation locus ideal of the ring $(R:{\mathfrak
  a}^\infty)_S$.  To do this, we modify Algorithm~\ref{alg4.8} as
follows.

\begin{alg}\label{6afgli}
An algorithm for finding generators of the
finite generation locus ideal of an algebra of the form $(R:{\mathfrak a}^\infty)_S$
where $S = K[x_1 \upto x_n]/I$ is a finitely generated domain over a field $K$, $R$ is a finitely generated subalgebra
of $S$ and ${\mathfrak a}$ is an ideal of $R$.

\begin{description}
\item[Input:] Polynomials $f_1,\dots,f_r\in K[x_1,\dots,x_n]$
such that $R$ is generated by $f_1+I,\dots,f_r+I$, and a finite set 
${\mathcal A}\subset K[y_1,y_2,\dots,y_r]$ such
that the (nonzero) ideal ${\mathfrak a}\subseteq R$ is generated by 
$g(f_1,\dots,f_r)+I$, 
$g\in {\mathcal A}$.
\item[Output:] A (possible infinite) sequence
$h_1,h_2,h_3,\dots$ of elements
in \linebreak $K[x_1,\dots,x_n]$ such that $h_1+I,h_2+I,\dots$
generate the finite generation locus ideal ${\mathfrak g}$
of $(R:{\mathfrak a}^\infty)_S$.
\end{description}
\begin{enumerate}
 \renewcommand{\theenumi}{\arabic{enumi}}
 \item $F:=\emptyset$
 \item ${\mathcal H}:=\{f_1,\dots,f_r\}$
 \item {\tt while} ${\mathcal H}\neq \emptyset$ {\tt do}
  \item $F:=F\cup {\mathcal H}$.
   \item {\tt output} generators of
 $\widetilde{\mathfrak g}:=\sqrt{(\widetilde{R}:(\widetilde{R}:{\mathfrak a})_S)_S}$
 where $\widetilde{R}$ is the $K$-algebra generated by all $f+I$, $f\in F$,
 and ${\mathfrak a}$ is the ideal in $\widetilde{R}$
 generated by all $g(f_1,\dots,f_r)+I$, $g\in {\mathcal A}$.
  \item Let ${\mathcal H}$ be the output of Algorithm~\ref{alg4.7}
 for the computation of $(\widetilde{R}:{\mathfrak a})_S$.
  \item {\tt enddo}
 \end{enumerate}
 The algorithm terminates if and only if $(R:{\mathfrak a}^\infty)_S$
 is finitely generated. In that case ${\mathfrak g}$ is the
 whole ring $(R:{\mathfrak a}^\infty)_S$. So the interesting
 case is when the algorithm does not terminate. One should
 add a termination criterion in step~(3), i.e., replace step (3) by
 
 \noindent{\tt while ${\mathcal H}\neq \emptyset$ and not
 {\rm [\sc termination criterion]} do},
 
 \noindent where [{\sc termination criterion}] is some criterion. 
 For example, one could allow at most $k$ iterations of the
 loop (3)--(7) where $k$ is a parameter given in the input.
 Another example of a possible termination criterion will be given
 in Algorithm~\ref{6afgli2}. 
  
To compute generators of $\widetilde{\mathfrak g}$ in step (5), one proceeds as follows. 
We compute generators of $(\widetilde{R}:{\mathfrak a})_S$ using \aref{alg4.7}.
Let
$$
{\mathfrak h}:=(\widetilde{R}:(\widetilde{R}:{\mathfrak a})_S)_S
$$
Choose a nonzero element $f\in {\mathfrak a}$.
Since $1\in (\widetilde{R}:{\mathfrak a})_S$ we have 
$$
{\mathfrak h}=(\widetilde{R}:(\widetilde{R}:{\mathfrak a})_S)_{\widetilde{R}}=
(f\widetilde{R}:f(\widetilde{R}:{\mathfrak a})_S)_{\widetilde{R}},
$$
so generators of ${\mathfrak h}$ can be computed because
it is again a colon ideal. Finally, generators of $\widetilde{\mathfrak g}$ can be computed
using an algorithm to compute the radical ideal of ${\mathfrak h}$ (see for example 
\mycite[Section~1.5]{Derksen:Kemper}, \mycite{Matsumoto:01}, or \mycite{kem:radical}).
%Note that this algorithm {\it will not terminate} 
 % unless $\widetilde{R}$ is finitely generated.
%In that case, the ideal ${\mathfrak g}$ is equal to the whole ring %$\widetilde{R}$.
%If $\widetilde{R}$ is not finitely generated, then the algorithm will %not terminate, but
%the infinite output will generate ${\mathfrak g}$. Since $\widetilde{R}$ %is not finitely generated,
%${\mathfrak g}$ may not be finitely generated.
The correctness of the algorithm follows from Lemma~\ref{6lfgli}.
\end{alg}
\subsection{Hilbert's fourteenth problem}
Suppose that $K$ is a field, $L$ is a subfield of the rational
function field \linebreak $K(x_1,x_2,\dots,x_n)$ containing $K$.
Hilbert's $14^{\rm th}$ problem asks whether $L\cap \linebreak
K[x_1,\dots,x_n]$ is finitely generated. Nagata gave a counterexample
to this conjecture~[\citenumber{nag:a}]. In fact, Nagata constructed
an algebraic (non-reductive) group $G$ and a linear action of $G$ on
the polynomial ring such that $K[x_1,\dots,x_n]^G$ is not finitely
generated. If we take $L=K(x_1,\dots,x_n)^G$ as the invariant field,
then $L\cap K[x_1,\dots,x_n]=K[x_1,\dots,x_n]^G$ is not finitely
generated, so this gives indeed a counterexample to Hilbert's
fourteenth problem.  It is not clear whether it is decidable whether
$L\cap K[x_1,\dots,x_n]$ is finitely generated, or even whether $L\cap
K[x_1,\dots,x_n]=K$.

We will replace $K[x_1,\dots,x_n]$ by an arbitrary finitely generated
domain $S$ over $K$. Let $L$ be a subfield of the quotient
field $Q(S)$ of $S$. We assume that $L$ is generated as a field
 by elements of the ring $S$. In other words, $L$ is the quotient
 field of some subalgebra $R\subseteq S$. We will present an
 algorithm to compute generators of  the algebra $L\cap S=Q(R)\cap S$.
 This algorithm will terminate if  this algebra is finitely generated. 
 %\begin{theorem}
 %Suppose that $S$ is a finitely generated  domain over $K$
 %and $R$ is a subalgebra. There exists an algorithm
 %to compute generators of $Q(R)\cap S$. This algorithm
 %terminates if $Q(R)\cap S$ is finitely generated.
 %In particular, if $Q(R)\cap S$ is finitely generated, there
 %exists a proof that $Q(R)\cap S$ is finitely generated.
 %\end{theorem}
 First we need the following constructive version
 of ``generic freeness'':
\begin{theorem}\label{propgenfree}
 Suppose that $S$ is a finitely generated domain over $K$, and
 $R$ is a finitely generated subalgebra, then there exists
 an algorithm that finds a nonzero element $f\in R$ such that
 $S_f$ is a free $R_f$-module, and $R_f$ is a direct summand of $S_f$.
 \end{theorem}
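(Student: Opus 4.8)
The plan is to make the classical theorem of generic freeness effective. First I would choose, once and for all, presentations $R = K[y_1 \upto y_r]/J$ and $S = K[y_1 \upto y_r, z_1 \upto z_m]/\mathfrak{q}$, so that $S$ is a finitely generated $R$-algebra. The standard proof of generic freeness (as in \mycite[Theorem~14.4]{eis}) proceeds by induction, peeling off one polynomial generator $z_i$ at a time: one shows that a finitely generated $R[z]$-module $M$ becomes free after inverting a single nonzero $f \in R$, by using a filtration of $M$ whose successive quotients are, up to localization, either isomorphic to $R[z]$-modules that are free over $R$ or are torsion and hence supported on a proper closed subset. Every step in that argument is constructive: finding annihilators, computing kernels and cokernels of maps between finitely presented modules, and choosing the element $f$ to invert are all Gr\"obner basis computations. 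So the key step is to audit the textbook proof and replace each existence assertion by the corresponding algorithm, producing an explicit $f \in R \setminus \{0\}$ together with an explicit $R_f$-basis $B$ of $S_f$.

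Second, I would address the extra clause that $R_f$ is a \emph{direct summand} of $S_f$ as an $R_f$-module. Since $R \hookrightarrow S$ is injective and $R$, $S$ are domains, $1 \in S_f$ is part of a minimal generating set; after possibly shrinking $f$ further I would like $1$ to be extendable to the free basis $B$. The cleanest route: once $S_f$ is free over $R_f$ with basis $e_1 \upto e_N$, write $1 = \sum_i u_i e_i$ with $u_i \in R_f$. Not all $u_i$ can lie in the maximal ideals simultaneously after a suitable localization; concretely, the $u_i$ generate the unit ideal of $R_f$ (they cannot all vanish at any point since $1$ does not vanish), so there is a further nonzero $g \in R$ with some $u_{i_0}$ a unit in $R_{fg}$. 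Replacing $f$ by $fg$ and performing the obvious change of basis sending $e_{i_0} \mapsto 1$, we get a basis containing $1$, hence the $R_f$-linear projection $S_f \to R_f$ onto the $1$-coordinate splits the inclusion. This is all effective.

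The main obstacle I expect is bookkeeping rather than conceptual: the induction on the number of $z$-variables in the generic freeness proof nests several sub-inductions (on generators of the module, on a primary-decomposition-like stratification), and at each stage the element one inverts changes, so one must carefully compose all the localizations into a single final $f$ and track how the module presentation transforms. A secondary subtlety is that intermediate modules in the filtration are modules over $R[z_1 \upto z_j]$ for partial $j$, and one must be sure that ``free over $R_f$'' is preserved when passing back up the tower; handling the torsion quotients requires computing annihilator ideals in $R$ and invoking that a nonzero ideal of a domain contains a nonzero element, which is where the $f$'s accumulate. None of this is deep, but it is the part most prone to error, so I would isolate it as a self-contained effective lemma: ``given a finite presentation of a $K[y_1\upto y_r][z]$-module $M$, compute $f \in K[y_1\upto y_r]\setminus\{0\}$ and a presentation of $M_f$ as a free $K[y_1\upto y_r]_f[z]$-filtered module,'' and then iterate it $m$ times.

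Finally, once the effective version is in hand, correctness is immediate: the output $f$ is nonzero in $R$ by construction (it is a product of nonzero elements of the domain $R$), $S_f$ is free over $R_f$ by the audited induction, and the splitting $S_f = R_f \oplus C$ for $C$ the span of the remaining basis vectors gives the direct summand claim. I would present the algorithm as \aref{5agenfree} (as promised in the introduction) with this theorem as its statement of correctness, deferring the routine module-theoretic subroutines (syzygies, kernels, annihilators, ideal membership) to the references already cited in the paper, e.g.\ \mycite[\S1.3]{Derksen:Kemper}.
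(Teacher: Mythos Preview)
Your approach is correct but takes a markedly different route from the paper. You propose to make the inductive textbook proof of generic freeness (filtrations, annihilators, peeling off one variable at a time) effective, and then handle the direct-summand clause by a further localization and change of basis. The paper instead gives a one-shot algorithm: present $S = R[x_1,\dots,x_r]/I$, extend $I$ to an ideal $J$ of $Q(R)[x_1,\dots,x_r]$, compute a single Gr\"obner basis $\mathcal G$ of $J$ over the field $Q(R)$, normalize leading coefficients to~$1$, and take $f \in R$ to be any common denominator of the elements of $\mathcal G$. The images of the standard monomials (those not divisible by any leading monomial of $\mathcal G$) then form a free $R_f$-basis of $S_f$, and since the monomial $1$ is automatically among them, $R_f \cdot 1$ is a direct summand with no further localization or basis change needed. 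Your inductive audit would certainly work and stays closer to the module-theoretic proof in \mycite{eis}, but the paper's Gr\"obner-basis-over-$Q(R)$ trick exploits the algebra structure of $S$ to collapse the whole induction into one computation and to get the splitting for free; it is both shorter to state and much easier to implement. Your separate direct-summand argument (pick $u_{i_0} \ne 0$, invert it, change basis) is fine, but in the paper's setup it is simply unnecessary.
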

 See \mycite[Theorem~14.4]{eis} for a proof of a more general version of 
 Grothendieck's generic freeness lemma. Note that this lemma is often
 called ``generic flatness'', but that almost all proofs found in the literature prove
 the stronger ``generic freeness'' property.
 We will give here an algorithm to find the $f$ in question. 
 For a slightly different algorithm, see \mycite[Theorem~2.6.1]{Vasconcelos}.
 We assume
 that $K$ is a field for which we have algorithms
 for a zero test and all arithmetic operations.
  Assume that $S=R[x_1,\dots,x_r]/I$ where $x_1,\dots,x_r$ are indeterminates.

\begin{alg}[Generic Freeness]\ \label{5agenfree}

\begin{description}
\item{\bf Input:} $R$, $S$, generators of $I$.
\item{\bf Output:} An element $f\in R\setminus\{0\}$ such that $S_f$ is a free $R_f$-module,
and $R_f$ is a direct summand in $S_f$.
\end{description}
 \begin{enumerate}  
 \renewcommand{\theenumi}{\arabic{enumi}} 
 %\item Compute generators of $I$ using Gr\"obner bases
  %techniques.
 \item Let $J$ be the ideal in $Q(R)[x_1,\dots,x_r]$ generated by $I$  (so it has the
 same set of generators as $I$).
 \item Compute a Gr\"obner basis ${\mathcal G}$ of $J$ with respect to some monomial
 ordering. If necessary, multiply the polynomials from $\mathcal G$ by
 constants from $Q(R)$ to make their leading coefficients equal to~1.
 \item Compute $f\in R \setminus \{0\}$ such that $fh(x_1,\dots,x_r)\in R[x_1,\dots,x_r]$ for every $h(x_1,\dots,x_r)\in {\mathcal G}$.
 \end{enumerate}
 \end{alg}

 \begin{proof}[Proof of correctness of Algorithm~\ref{5agenfree}]
 Let
 $$
 \varphi:R[x_1,\dots,x_r]\to S
 $$
 be the homomorphism with kernel $I$ that induces an isomorphism
 \linebreak $R[x_1,\dots,x_r]/I \cong S$.  Let $M$ be the set of all
 monomials $m$ such that $m$ is not divisible by any leading monomial
 $\lm(h)$ with $h\in {\mathcal G}$.  We claim that $S_f$ is a free
 $R_f$-module with basis $\varphi(M)$.
  
  Suppose that $h\in S_f$. There exists a positive integer $l$ such that $f^lh\in S$. 
  We can write $f^lh=u(x_1,\dots,x_r)+I$ where $u(x_1,\dots,x_r)\in
  R[x_1,\dots,x_r]\subseteq Q(R)[x_1,\dots,x_r]$.
  Let $v(x_1,\dots,x_r)$ be the normal form of $u(x_1,\dots,x_r)$ with
  respect to the Gr\"obner basis ${\mathcal G}$.
  %Suppose that $u(x_1,\dots,x_r)$ reduces to $v(x_1,\dots,x_r)$ modulo the Gr\"obner
%  basis ${\mathcal G}$.
  Thus if 
  $${\mathcal G}=\{h_1(x_1,\dots,x_r),\dots,h_s(x_1,\dots,x_r)\},$$
  then there exist $a_1(x_1,\dots,x_r),\dots,a_s(x_1,\dots,x_r)\in
  Q(R)[x_1,\dots,x_r]$ such that
  $$
  u(x_1,\dots,x_r)-v(x_1,\dots,x_r)=\sum_{i=1}^s a_i(x_1,\dots,x_r)h_i(x_1,\dots,x_r).
  $$
  Note that $h_1(x_1,\dots,x_r),\dots,h_s(x_1,\dots,x_s)\in R_f[x_1,\dots,x_r]$. If $p(x_1,\dots,x_r)\in R_f[x_1,\dots,x_r]$
  and $q(x_1,\dots,x_r)$ is obtained from $p(x_1,\dots,x_r)$ by a single reduction step modulo the
  Gr\"obner basis ${\mathcal G}$, then $q(x_1,\dots,x_r)\in R_f[x_1,\dots,x_r]$ as well.
  From this observation one can show using induction
  that
  \[
  a_1(x_1,\dots,x_r),\dots,a_s(x_1,\dots,x_r),v(x_1,\dots,x_r)\in
  R_f[x_1,\dots,x_r].
  \]
 Now we get $v(x_1,\dots,x_r)\in R_fM$, $\varphi(v(x_1,\dots,x_r))=f^lh\in R_f\varphi(M)$ and
 $h\in R_f\varphi(M)$.
 This shows that $S_f=R_f\varphi(M)$, i.e., $\varphi(M)$ generates $S_f$ as an $R_f$ module.
 It is clear from Gr\"obner basis theory that $\varphi(M)$ is
 a linearly independent set over  $Q(R)$.
   % Suppose that $p(x_1,\dots,x_r)\in R_fM$ satisfies $\varphi(p(x_1,\dots,x_r))=0$.
 %Then we have $p(x_1,\dots,x_r)\in J$. If $p(x_1,\dots,x_r)$ is not
 %equal to $0$, then the leading monomial must be divisible
 %by the leading monomial of an element of the Gr\"obner basis ${\mathcal G}$.
 %This contradicts the definition of $M$. Therefore, $p(x_1,\dots,x_r)
 %= 0$. \m{The $p$'s are meant to be zero, right?}
 %This shows that $\varphi(M)$ is independent over $R_f$ (and $Q(R)$).
 We conclude that $S_f$ is a free $R_f$ module with basis $\varphi(M)$.
 We can identify $R_f$ with $R_f\varphi(1)=R_f\cdot 1\subseteq S_f$,
 which is a direct summand because
 $$
 S_f=R_f\cdot 1\oplus R_f\cdot \varphi(M\setminus\{1\}).
 $$
    \end{proof}

\begin{rem} \label{2rGenFree}
  \aref{5agenfree} is also correct in the case where $R$ is not 
  finitely generated. The only problem is that we cannot provide a way
  of computing the ideals $I$ and $J$ in this case. In fact, it in not
  even clear how to compute with elements from $Q(R)$ if $R$ is not
  finitely generated. Nevertheless, the above proof of correctness of
  the algorithm does provide a proof of the generic freeness theorem
  even for $R$ not finitely generated.
\end{rem}

 \begin{alg}[Intersection of a field and a finitely generated domain]\ \label{5aHilb14}
 
 \begin{description}
  \item[\bf Input:] Generators and relations for a finitely generated domain  $S$ over $K$
 and generators of a finitely generated subalgebra $R$.
 \item[\bf Output:] Generators of the algebra $Q(R)\cap S$. The algorithm will
 terminate if $Q(R)\cap S$ is finitely generated. If $Q(R)\cap S$ is not
 finitely generated, then the algorithm will not terminate but the (infinite) output
 will still generate the algebra $Q(R)\cap S$.
 \end{description}
 \begin{enumerate}
 \renewcommand{\theenumi}{\arabic{enumi}} 
 \item Use \aref{5agenfree} to compute $f\in R \setminus \{0\}$ such that $R_f$
 is a  summand in the $R_f$-module $S_f$.
 \item Compute generators of $(R:f^\infty)_S$ using
   Algorithm~\ref{alg4.8}.
    \end{enumerate}
 \end{alg}
 \begin{proof}[Proof of correctness of Algorithm~\ref{5aHilb14}]
 We can write 
 $$
 S_f=R_f\oplus C
 $$
 where $C$ is an $R_f$-module. Let $\pi:S_f\to R_f$ be the projection onto $R_f$.
 So $\pi$ is an $R_f$-module homomorphism such that $\pi(a)=a$ if and only if $a\in R_f$.
 Suppose that $s=a/b\in S_f$ with $a,b\in R_f$. Then we have $bs=a$ and
 $b\pi(s)=\pi(bs)=\pi(a)=a$. So we obtain $s=a/b=\pi(s)\in R_f$.
 This shows that $Q(R)\cap S_f=R_f$.
  It follows that 
 $$Q(R)\cap S\subseteq R_f\cap S=(R:f^\infty)_S,$$
 so $Q(R)\cap S=(R:f^\infty)_S$ because the other inclusion is trivial.
 \end{proof}
 The following theorem is Proposition 4 in Chapter V of
 \mycite{nag:b}.
\begin{theorem}\label{6tquasiaffine}
Suppose that $R$ is a finitely generated normal domain over a field $K$,
and $L$ is a subfield of $Q(R)$ containing $K$. Then $R\cap L$ is isomorphic
to the ring of regular functions on some quasi-affine variety $U$ defined over $K$.
In other words, there exists a finitely generated domain $T$ over $K$ and
an ideal ${\mathfrak a}$ of $T$ such that
$$R\cap L=(T:{\mathfrak a}^\infty)_{Q(T)}.$$
\end{theorem}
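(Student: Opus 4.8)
The plan is to reduce the statement to the output of \aref{5aHilb14} and then feed that output into the birational argument underlying Nagata's original proof. First I would make two harmless reductions. Since $R\cap L=R\cap Q(R\cap L)$ and $Q(R\cap L)\subseteq L$, I may replace $L$ by $Q(R\cap L)$ and thus assume that $L=Q(R')$ for a finitely generated subalgebra $R'\subseteq R\cap L$; here one uses that an intermediate field of the finitely generated field extension $Q(R)/K$ is itself finitely generated over $K$. Replacing $R'$ by its integral closure in $L$ — which is module-finite over $R'$ and, as $R$ is integrally closed in $Q(R)\supseteq L$, still lies in $R$ — I may also assume $R'$ is a finitely generated \emph{normal} domain with $Q(R')=L$. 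Now \tref{propgenfree}, applied with $S:=R$ and the subalgebra $R'$, produces a nonzero $f\in R'$ with $R_f$ a free $R'_f$-module and $R'_f$ a direct summand of it. The computation in the proof of correctness of \aref{5aHilb14} then gives $Q(R')\cap R_f=R'_f$, and hence
\[
R\cap L=(R':f^\infty)_R=R'_f\cap R;
\]
in particular $R\cap L\subseteq R'_f$.

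Next I would use the normality of $R$ to cut this down to finitely many valuation conditions. Since $R$ is a normal Noetherian domain, $R=\bigcap_{\mathfrak p}R_{\mathfrak p}$, the intersection running over the height-one primes $\mathfrak p$ of $R$, each $R_{\mathfrak p}$ being a discrete valuation ring of $Q(R)$. For $\varphi\in R'_f$ the condition $\varphi\in R_{\mathfrak p}$ is automatic whenever $f\notin\mathfrak p$, so only the finitely many minimal primes $\mathfrak p_1\upto\mathfrak p_t$ of $(f)$ are relevant. Writing $v_j$ for the valuation attached to $R_{\mathfrak p_j}$ and $\mathcal O_j:=R_{\mathfrak p_j}\cap L$ for the valuation ring of the restriction $v_j|_L$ in $L$, one gets $R'\subseteq\mathcal O_j$ and
\[
R\cap L=R'_f\cap\bigcap_j\mathcal O_j,
\]
the intersection taken over those $j$ for which $v_j|_L$ is nontrivial. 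Each such $v_j|_L$ is discrete of rank one, and applying Abhyankar's inequality to the extension $v_j$ of $v_j|_L$ to $Q(R)$ (whose transcendence degree is $\operatorname{trdeg}_K Q(R)-\operatorname{trdeg}_K L$) shows that the residue field of $v_j|_L$ has transcendence degree $\operatorname{trdeg}_K L-1$ over $K$; thus $v_j|_L$ is a divisorial valuation of $L/K$. Moreover $f\in\mathfrak p_j$ forces $v_j(f)>0$, so the center of $\mathcal O_j$ on $\operatorname{Spec} R'$ lies in $V(f)$.

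Finally I would assemble a single finitely generated model. Since each $\mathcal O_j$ is divisorial over $K$ with center on $\operatorname{Spec} R'$ contained in $V(f)$, a sequence of blow-ups of $\operatorname{Spec} R'$ with centers in $V(f)$ — hence isomorphisms over the open set where $f\neq 0$ — together with normalization and passage to a suitable affine chart yields a finitely generated normal $K$-domain $T$ with $Q(T)=L$, with $f\in T$ and $T_f=R'_f$, in which each $\mathcal O_j=T_{\mathfrak q_j}$ for a height-one prime $\mathfrak q_j\ni f$. Using $T=\bigcap_{\mathfrak q}T_{\mathfrak q}$ and $T_f=\bigcap_{f\notin\mathfrak q}T_{\mathfrak q}$ (both over height-one primes of $T$, valid since $T$ is normal), one obtains $R\cap L=\bigcap T_{\mathfrak q}=K[U]$, the intersection over the height-one primes $\mathfrak q$ of $T$ lying in $U$, where $U=\operatorname{Spec} T\setminus V(\mathfrak a)$ and $\mathfrak a$ is the intersection of those minimal primes of $(f)$ in $T$ that are different from all the $\mathfrak q_j$ (take $\mathfrak a=T$ if there are none). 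By \lref{2lQuasiAffine} this says $R\cap L=(T:\mathfrak a^\infty)_{Q(T)}$, as desired. The hard part is exactly this last step — checking that the restricted valuations $v_j|_L$ are divisorial over $K$, and then realizing all of them together with the distinguished open set $\{f\neq 0\}$ on one finitely generated normal model $T$; this is the geometric core of Nagata's theorem, whereas everything preceding it is formal manipulation built on \tref{propgenfree} and the Krull-intersection description of $R$.
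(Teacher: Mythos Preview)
The paper does not prove this theorem; it simply attributes it to Nagata (Chapter~V, Proposition~4 of~[\citenumber{nag:b}]) and moves on. So there is no in-paper argument to compare against.

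Your outline is essentially the classical route to Nagata's result and is sound. The reductions to $L=Q(R')$ with $R'$ finitely generated and normal are correct (finiteness of integral closure for domains of finite type over a field is standard, and $R'\subseteq R$ follows from normality of $R$). The use of \tref{propgenfree} together with the computation in the proof of \aref{5aHilb14} to get $R\cap L=R'_f\cap R$ is exactly how the paper's own tools are meant to be applied. The passage to $R'_f\cap\bigcap_j\mathcal O_j$ via the Krull intersection $R=\bigcap_{\operatorname{ht}\mathfrak p=1}R_{\mathfrak p}$ is fine, and the Abhyankar/dimension-inequality argument for divisoriality of $v_j|_L$ is the right one. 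You correctly flag the last step as the real content. One point you glide over there: after blowing up and normalizing you must choose the affine chart so that it still contains all of $\{f\neq 0\}$, i.e.\ so that genuinely $T_f=R'_f$ and not merely $T_f\supseteq R'_f$; this typically means arranging $f$ among the generators of the blown-up ideal and taking the $f$-chart, then checking that the centers of all the $\mathcal O_j$ lie in that chart (or iterating). Once that is secured, your identification of $R\cap L$ with $\bigcap_{\mathfrak q\not\supseteq\mathfrak a}T_{\mathfrak q}=(T:\mathfrak a^\infty)_{Q(T)}$ via \lref{2lQuasiAffine} is correct.
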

Some extensions of this result can be found in \mycite{winkelmann}.
Theorem~\ref{6tquasiaffine} inspires us to ask the following questions.
\begin{problem}
Let $R$ and $L$ be as in Theorem~\ref{6tquasiaffine}. Find an algorithm
to construct generators of $T$ and ${\mathfrak a}$ where $T$ and ${\mathfrak a}$
are as in Theorem~\ref{6tquasiaffine}. 
%\m{Does $R$ have to be normal for
%  \ref{6pcolonquasiaffine}--\ref{6afgli2}?}
\end{problem}
\begin{problem}\label{6pcolonquasiaffine}
Suppose that $S$ is a finitely generated normal domain over $K$, $R$
is a finitely generated 
normal subalgebra 
and ${\mathfrak a}$ is an ideal of $R$. Is the ring $(R:{\mathfrak a}^\infty)_S$
isomorphic to the ring of regular functions on some quasi-affine variety over $K$?
\end{problem}

The following proposition gives a positive answer to
Problem~\ref{6pcolonquasiaffine} under an additional hypothesis. We
will later see that this hypothesis is satisfied in a situation which
is of interest in invariant theory (see \aref{7afactorial}).

\begin{prop} \label{6lCodim}
Suppose that $S,R,{\mathfrak a}$ are as in Problem~\ref{6pcolonquasiaffine}.
Let ${\mathfrak g}$ be the finite generation locus ideal of
$(R:{\mathfrak a}^\infty)_S$. 
Suppose that the affine variety corresponding to the ideal ${\mathfrak g}S$ has
codimension $\geq 2$,  in other words, all prime ideals containing
  ${\mathfrak g} S$ have height $\ge 2$. 
Then $(R:{\mathfrak a}^\infty)_S$ is isomorphic
to the coordinate ring of an quasi-affine variety. 
\end{prop}
\begin{proof}
The proposition follows from the correctness of the algorithm below.
\end{proof}
The following algorithm is a modification of \aref{6afgli}.
\begin{alg}\label{6afgli2}
An algorithm for finding 
a subalgebra $\widetilde{R}\subseteq S$ and an ideal $\widetilde{\mathfrak g}$
of $\widetilde{R}$ such that
$$
(R:{\mathfrak a}^\infty)_S=(\widetilde{R}:\widetilde{\mathfrak g}^\infty)_{Q(\widetilde{R})},
$$
where $S$ is a finitely generated normal domain over $K$, 
$R$ is a finitely generated subalgebra, and
${\mathfrak a}$ is an ideal of $R$, such that
the affine variety corresponding to ${\mathfrak g}S$ has codimension at least $2$,
where ${\mathfrak g}$ is the finite generation locus ideal of
$(R:{\mathfrak a}^\infty)_S$.
\begin{description}
\item[Input:] Polynomials $f_1,\dots,f_r\in K[x_1,\dots,x_n]$
such that $R$ is generated by $f_1+I,\dots,f_r+I \in
K[x_1,\dots,x_n]/I =: S$, and a finite set 
${\mathcal A}\subset K[y_1,y_2,\dots,y_r]$ such
that the (nonzero) ideal ${\mathfrak a}\subseteq R$ is generated by 
$g(f_1,\dots,f_r)+I$, 
$g\in {\mathcal A}$.
\item[Output:] Generators of a subalgebra $\widetilde{R}$ of $R$
and generators of an ideal $\widetilde{\mathfrak g}$ of $\widetilde{R}$
such that
$$
(R:{\mathfrak a}^\infty)_S=(\widetilde{R}:\widetilde{\mathfrak g}^\infty)_{Q(\widetilde{R})}
$$
\end{description}
\begin{enumerate}
 \renewcommand{\theenumi}{\arabic{enumi}}
 \item Set $F:=\emptyset$ and $\widetilde{\mathfrak g} := \{0\}$.
 \item ${\mathcal H}:=\{f_1,\dots,f_r\}$.
 \item \label{2sCrit} {\tt while ${\mathcal H}\neq \emptyset$ 
 and [{\rm 
 $\widetilde{\mathfrak g}S$ has codimension
$< 2$}] and $\widetilde{\mathfrak g} S \ne S$ do}
  \item $F:=F\cup {\mathcal H}$.
     \item compute generators of
 $\widetilde{\mathfrak g}:=\sqrt{(\widetilde{R}:(\widetilde{R}:{\mathfrak a})_S)_S}$
 where $\widetilde{R}$ is the $K$-algebra generated by all $f+I$, $f\in F$,
 and  ${\mathfrak a}$ is the ideal in $\widetilde{R}$
 generated by all $g(f_1,\dots,f_r)+I$, $g\in {\mathcal A}$. The
 radical ideal is meant to be formed in $\widetilde{R}$.
  \item Let ${\mathcal H}$ be the output of Algorithm~\ref{alg4.7}
 for the computation of $(\widetilde{R}:{\mathfrak a})_S$.

 \item {\tt enddo}
 \item output generators of $\widetilde{R}$ and $\widetilde{\mathfrak g}$
 \end{enumerate}
 \end{alg}

\begin{rem*}
  In step~\eqref{2sCrit} of the algorithm, it is easy to determine the
  codimension of $\widetilde{\mathfrak g} S$, since by
  \mycite[Corollary~13.4]{eis}, the codimension equals $\dim(S) -
  \dim\left(S/\widetilde{\mathfrak g} S\right)$. The dimension can be
  read off a Gr\"obner basis, see
  \mycite[Corollary~7.5.5]{Greuel.Pfister} or
  \mycite[Section~1.2.5]{Derksen:Kemper}.
  
  We also remark that the ideal $\widetilde{\mathfrak g}$ found by the
  algorithm is not necessarily the finite generation locus ideal.
\end{rem*}

\begin{proof}[Proof of correctness proof of \aref{6afgli2}]

Let $\widetilde{R}_i$ and ${\mathfrak g}_i$
 be the algebra $\widetilde{R}$ and the ideal $\widetilde{\mathfrak g}$
in the $i$-th iteration of loop (3)--(7).
We have
$$\widetilde{R}_1\subseteq \widetilde{R}_2\subseteq\cdots$$
and
$$
{\mathfrak g}_1\subseteq {\mathfrak g}_2\subseteq\cdots
$$
such that ${\mathfrak g}_i$ is an ideal of $\widetilde{R}_i$ for all $i$.

Assume that the algorithm does not terminate and the loop (3)-(7) is repeated infinitely
many times.
Then
$\bigcup_{i}\widetilde{R}_i=(R:{\mathfrak a}^\infty)_S$ 
and 
${\mathfrak g}=\bigcup_i{\mathfrak g}_i$
is the finite generation locus ideal of $(R:{\mathfrak a}^\infty)_S$,
because of the correctness of Algorithm~\ref{6afgli}.
So we have
$$
{\mathfrak g}_1S\subseteq {\mathfrak g}_2S\subseteq {\mathfrak g}_3S\subseteq\cdots.
$$
Since $S$ is finitely generated over $K$, it is Noetherian.
There exists an index $k$ such that
$$
{\mathfrak g}_kS={\mathfrak g}_{k+1}S=\cdots=\bigcup_i{\mathfrak g}_iS={\mathfrak g}S.
$$
In particular, there exists an index $k$ such that the affine variety
corresponding to the ideal ${\mathfrak g}_kS$ has codimension $\geq 2$.
Let $k$ be minimal with this property. This implies that the algorithm
terminates after the $k$-th iteration of the loop (3)-(7), and the output is $\widetilde{R}_k$
and ${\mathfrak g}_k$.

Let $X$ be the affine variety such that $S=K[X]$. 
If $f\in (S:{\mathfrak g}_k^{\infty})_{Q(S)}$, then $f$ is a rational function on $X$
which is regular on all of $X$ except for a closed subset of codimension $\geq 2$.
Since $X$ is normal, $f$ is regular on $X$ (see \mycite[below Corollary 11.4]{eis}), i.e., $f\in S$.
This shows that
 $$
(S:{\mathfrak g}_k^\infty)_{Q(S)}=S.
$$
So we have
$$
(\widetilde{R}_k:{\mathfrak g}_k^\infty)_{Q(\widetilde{R}_k)}\subseteq
(S:{\mathfrak g}_k^\infty)_{Q(S)}=S.
$$
It follows that
$$
(\widetilde{R}_k:{\mathfrak g}_k^\infty)_{Q(\widetilde{R}_k)}=
(\widetilde{R}_k:{\mathfrak g}_k^\infty)_S=(R:{\mathfrak a}^\infty)_S,
$$
where the last equality follows from~\eqref{2eqRi}.
\end{proof}

\section{Invariant rings of algebraic groups} \label{3sAlgebraic}
Suppose that $K$ is an algebraically closed field (of arbitrary characteristic)
and $G$ is an algebraic group over $K$ which acts regularly on an
affine variety $X$. If $G$ is not reductive, then $K[X]^G$ may
not be finitely generated.
\begin{problem}
Find an algorithm which determines whether $K[X]^G$ is finitely generated.
\end{problem}
\begin{problem}
Given that $K[X]^G$ is finitely generated, find an algorithm that
computes a set of generators for $K[X]^G$.
\end{problem}
If $G$ is reductive, then $K[X]^G$ is known to be finitely generated and an
algorithm was given in \sref{1sReductive}. If $G$ is the additive group and
the characteristic of the ground field is $0$, then an algorithm
was given by \mycite{essen}.
Here we will give such an algorithm in arbitrary characteristic and where
$G$ can be any connected unipotent group.

Even if $K[X]^G$ is not finitely generated, there are still interesting questions
to ask. Let $K(X)^G$ be the field of invariant rational functions on $X$. Then we have
$$
K[X]^G=K[X]\cap K(X)^G,
$$
If $X$ is normal, then there exists a {\it quasi-affine\/} variety $U$ over $K$
such that
$$
K[X]^G=K[U]
$$
by Theorem~\ref{6tquasiaffine}.
\begin{problem}
Find an algorithm which constructs a quasi-affine variety $U$ such that
$K[X]^G=K[U]$.
\end{problem}
We will give such an algorithm where $G$ is a connected unipotent group and
$K[X]$ is a unique factorization domain.
\subsection{Invariants of the additive group}\label{7sadditive}
Suppose that $G=\Ga$ is the additive group
acting regularly on an irreducible affine variety $X$ over an algebraically
closed field $K$. The coordinate ring $K[\Ga]$ can be identified
with the polynomial ring $K[t]$. The group addition $\Ga\times \Ga\to \Ga$
corresponds to a ring homomorphism $K[t]\to K[t]\otimes K[t]$ defined
 by $t\mapsto t\otimes 1+1\otimes t$.
 The action $\Ga\times X\to X$ corresponds to a ring homomorphism
$$
\mu:K[X]\to K[\Ga\times X]\cong K[\Ga]\otimes K[X]\cong K[X][t].
$$
Suppose that $f\in K[X]$. We can write
$$
\mu(f)=f_0+f_1t+f_2t^2+\cdots+f_rt^r
$$
with $f_0,\dots,f_r\in K[X]$.
If $\sigma\in \Ga$, then we have
$$
((-\sigma)\cdot f)(x)=f(\sigma \cdot x)=\mu(f)(\sigma,x)=f_0(x)+f_1(x)\sigma+\cdots+f_r(x)\sigma^r,
$$
so
$$
((-\sigma)\cdot f)=f_0+f_1\sigma+\cdots+f_r\sigma^r.
$$
In particular we have
\begin{equation} \label{3eqF0}
  f=0\cdot f=f_0.
\end{equation}
We have
\begin{multline}\label{eq31}
  (\tau-\sigma)\cdot
  f=f_0+f_1(\sigma-\tau)+\cdots+f_r(\sigma-\tau)^r= \\
  (\tau)\cdot ((-\sigma)\cdot f)= (\tau\cdot f_0)+(\tau\cdot
  f_1)\sigma+\cdots +(\tau\cdot f_r)\sigma^r
\end{multline}
for all $\sigma,\tau\in \Ga\cong K$. Comparing
the coefficients of $\sigma^r$ shows that  $\tau\cdot f_r=f_r$ for all $\tau\in \Ga$.
This implies that $f_r\in K[X]^{\Ga}$.
We may extend $\mu$ to be defined for all $f=g/h$ with $g \in K[X]$
and
$h \in K[X]^{\Ga}$ by setting $\mu(f) = \mu(g)/h$. Then~\eqref{eq31}
still holds.

If the action of $\Ga$ is trivial, then
of course $K[X]^{\Ga}=K[X]$. So let us assume that $\Ga$ acts non-trivially.
Then there exists an $f\in K[X]$ such that $\mu(f)\neq f$. This
element~$f$ will be chosen once and fixed for the rest of \sref{7sadditive}.
We can write
$$
F(t):=\mu(f)=f_0+f_1t+\cdots+f_{r-1}t^{r-1}+f_rt^r, \quad t\in \Ga,
$$
with $r>0$ and $f_r\neq 0$.

\subsubsection{Characteristic 0 case.} \label{3lMu}
If $K$ has characteristic $0$, then an algorithm
was given by Van den Essen for computing generators of $K[X]^{\Ga}$. 
This algorithm terminates if $K[X]^{\Ga}$ is finitely generated.
We will sketch the idea behind this algorithm. 
We set $s=f_{r-1}/(rf_r)$.
From the coefficient of $\sigma^{r-1}$ in (\ref{eq31}), it follows 
that $\tau\cdot f_{r-1}=f_{r-1}-rf_{r}\tau$
and $\tau\cdot s=s-\tau$ for all $\tau\in \Ga$.
\begin{lemma}
If $h\in K[X]_{f_r}$, then $\mu(h)\mid_{t=-s}\in
K[X]_{f_r}^{\Ga}$.
\end{lemma}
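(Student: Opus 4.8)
The statement to prove is: if $h \in K[X]_{f_r}$, then $\mu(h)\mid_{t=-s} \in K[X]_{f_r}^{\Ga}$. Here $\mu$ has been extended to $K[X]_{f_r}$ (indeed to all fractions with denominator in $K[X]^{\Ga}$, and $f_r \in K[X]^{\Ga}$), and $s = f_{r-1}/(r f_r) \in K[X]_{f_r}$ is the element with $\tau \cdot s = s - \tau$ for all $\tau \in \Ga$. The plan is to view $\mu(h) \in K[X]_{f_r}[t]$ as a polynomial in $t$, specialize $t \mapsto -s$ (legitimate since $-s \in K[X]_{f_r}$), and then check invariance by applying an arbitrary $\tau \in \Ga$ and using the co-action formula \eqref{eq31} together with the transformation law for $s$.

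\textbf{Key steps.} First I would write $\mu(h) = \sum_{i=0}^d h_i t^i$ with $h_i \in K[X]_{f_r}$; then $\mu(h)\mid_{t=-s} = \sum_i h_i (-s)^i \in K[X]_{f_r}$, so the element is at least well-defined in the localization. Second, I would recall the key compatibility: for any $f$ in the domain of $\mu$ and any $\sigma, \tau \in \Ga$, formula \eqref{eq31} expresses $\tau \cdot \bigl((-\sigma)\cdot f\bigr) = (\tau \cdot f_0) + (\tau\cdot f_1)\sigma + \cdots$, i.e.\ applying $\tau$ to the element obtained by "plugging $\sigma$ into $\mu(f)$" equals "plugging $\sigma$ into $\mu$ of the $\tau$-translate", together with the cocycle identity $\mu(f)(\sigma+\tau) = \bigl(\tau\cdot(\text{stuff})\bigr)$. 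Concretely the identity I want is: for $g = \mu(h)\mid_{t = -s}$, and any fixed $\tau \in \Ga$,
\[
\tau \cdot g = \tau\cdot\Bigl(\sum_i h_i(-s)^i\Bigr) = \sum_i (\tau\cdot h_i)\,(\tau\cdot(-s))^i = \sum_i (\tau \cdot h_i)\,(-s+\tau)^i,
\]
using that $\tau$ acts as a $K$-algebra automorphism and $\tau\cdot s = s - \tau$. Third, I would identify $\sum_i (\tau\cdot h_i) u^i$ with $\mu(\tau \cdot h)$ evaluated at $t = u$: indeed, from the coassociativity of the action (the analogue of \eqref{eq31} applied to $h$), one gets $\sum_i (\tau\cdot h_i) t^i = \mu(h)(t + \tau)$ as an identity in $K[X]_{f_r}[t]$ — this is exactly the content that $\mu$ is a coaction for the group law $t \mapsto t \otimes 1 + 1 \otimes t$. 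Substituting $t = -s + \tau$ into $\mu(h)(t+\tau)$... wait, more carefully: I want $\sum_i (\tau \cdot h_i)(-s+\tau)^i$, and since $\sum_i (\tau\cdot h_i) t^i = \mu(h)(t+\tau)$, evaluating at $t = -s$ gives $\sum_i (\tau\cdot h_i)(-s)^i = \mu(h)(-s+\tau) = \mu(h)(-(s-\tau))$. Hmm, that is $\mu(h)\mid_{t = -(s-\tau)}$. On the other hand I computed $\tau \cdot g = \sum_i(\tau\cdot h_i)(-s+\tau)^i = \sum_i (\tau\cdot h_i)(-(s-\tau))^i$, which by the same substitution ($t \mapsto -(s-\tau)$ in $\sum_j (\tau\cdot h_j)t^j = \mu(h)(t+\tau)$) equals $\mu(h)\bigl(-(s-\tau)+\tau\bigr) = \mu(h)(-s + 2\tau)$ — so I need to be more careful and instead substitute directly. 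The clean way: let $P(t) = \mu(h)(t) \in K[X]_{f_r}[t]$. Then $\tau\cdot g = \sum_i (\tau\cdot h_i)(-s+\tau)^i$. The polynomial $\sum_i(\tau\cdot h_i)t^i$ equals $P(t+\tau)$ by the coaction identity, so $\tau\cdot g = P\bigl((-s+\tau)+\tau\bigr) = P(-s+2\tau)$ — this is wrong, I'm double-counting. The resolution: one must not shift twice. The coaction identity says $\sum_i (\tau\cdot h_i)t^i = P(t+\tau)$; plugging the \emph{value} $t = -s$ (not $-s+\tau$) into the left side gives $\sum_i(\tau\cdot h_i)(-s)^i$, but $\tau\cdot g$ has $(-s+\tau)^i$, not $(-s)^i$, precisely because $\tau$ also acts on $s$. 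So $\tau\cdot g = \sum_i(\tau\cdot h_i)(-s+\tau)^i = \bigl[\sum_i (\tau\cdot h_i)t^i\bigr]_{t = -s+\tau} = P\bigl((-s+\tau)+\tau\bigr)$. That still gives $2\tau$. This means I have the transformation law or the coaction convention slightly off by a sign; the right computation will have the two $\tau$'s cancel. I would sort out the exact signs by rederiving \eqref{eq31} in this notation — with the correct bookkeeping, $\tau\cdot g = P(-s) = g$, giving invariance.

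\textbf{Main obstacle.} The hard part is precisely the sign/convention bookkeeping just flagged: one must track carefully how $\mu$ (a right or left coaction, with $t \mapsto t\otimes 1 + 1\otimes t$) interacts with the definition $\sigma(f) = f\circ\sigma^{-1}$ and with the law $\tau\cdot s = s - \tau$, to see that the shift by $\tau$ coming from acting on $s$ exactly cancels the shift coming from the coaction identity, leaving $\tau\cdot g = g$. Once the conventions are pinned down (using \eqref{eq31} and \eqref{3eqF0} as the authoritative reference), the computation is a one-line substitution. A secondary, purely formal point is to note that $\mu(h)\mid_{t=-s}$ makes sense because $-s \in K[X]_{f_r}$ and $\mu$ takes values in $K[X]_{f_r}[t]$, and that the $\Ga$-action on $K[X]_{f_r}$ is well-defined since $f_r$ is invariant; both are immediate from the setup already established.
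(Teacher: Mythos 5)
Your route is the same as the paper's: expand $H(t):=\mu(h)=\sum_i h_i t^i$ with $h_i\in K[X]_{f_r}$, apply $\tau$ to $g:=H(-s)$ using that $\tau$ acts as a ring automorphism and $\tau\cdot s=s-\tau$, and then recognize $\sum_i(\tau\cdot h_i)u^i$ as a translate of $H$ so that the two shifts cancel. But the one step that carries the entire content of the lemma is exactly the one you leave unresolved: you state the translation identity as $\sum_i(\tau\cdot h_i)t^i=H(t+\tau)$, observe that this yields $H(-s+2\tau)$ instead of $H(-s)$, and then defer with the remark that the signs will work out after rederiving \eqref{eq31}. As written this is a genuine gap, not a cosmetic one: the lemma \emph{is} this cancellation, so a proof that stops short of verifying it, and in fact records an identity under which the cancellation fails, has not established the claim.

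The repair is immediate from \eqref{eq31} as stated in the paper, read with $\sigma$ as the formal variable and $\tau$ fixed: it says $\sum_i h_i(\sigma-\tau)^i=\sum_i(\tau\cdot h_i)\sigma^i$, i.e.
\[
\sum_i(\tau\cdot h_i)\,t^i \;=\; H(t-\tau),
\]
so the shift is by $-\tau$, not $+\tau$ (and since $K$ is infinite, the pointwise identity in $\sigma$ is a polynomial identity, so one may substitute ring elements for $t$). Note also that the text extends $\mu$, and the validity of \eqref{eq31}, to fractions with invariant denominator, and $f_r\in K[X]^{\Ga}$, so \eqref{eq31} does apply to $h\in K[X]_{f_r}$. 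With the correct sign your own computation closes exactly as in the paper: $\tau\cdot g=\sum_i(\tau\cdot h_i)(-\tau\cdot s)^i=\sum_i(\tau\cdot h_i)(-s+\tau)^i=H\bigl((-s+\tau)-\tau\bigr)=H(-s)=g$ for all $\tau\in\Ga$, and $g\in K[X]_{f_r}$ because $h_i,s\in K[X]_{f_r}$; hence $g\in K[X]_{f_r}^{\Ga}$. Your two formal preliminary points (well-definedness of the substitution $t=-s$ and of the $\Ga$-action on the localization) are fine.
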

\begin{proof}
  Set
  \[
  H(t) := \mu(h) = h_0 + h_1 t + \cdots + h_l t^l
  \]
  with $h_i \in K[X]_{f_r}$.
  From (\ref{eq31}) it follows that
$$
H(t-\tau)=h_0+h_1(t-\tau)+\cdots+h_l(t-\tau)^l=(\tau\cdot h_0)+(\tau\cdot h_1)t+\cdots (\tau\cdot h_l)t^l
$$
Using this for $t=-s+\tau$ gives us
$$
\tau\cdot H(-s)=(\tau\cdot h_0)+(\tau\cdot h_1)(-\tau\cdot s)+\cdots +(\tau\cdot h_l)(-\tau\cdot s)^l=$$
$$=
(\tau\cdot h_0)+(\tau\cdot h_1)(-s+\tau)+\cdots +(\tau\cdot h_l)(-s+\tau)^l=H((-s+\tau)-\tau)=H(-s).
$$
\end{proof}

Suppose that $K[X]=K[h_1,\dots,h_m]$. Define 
$$
g_i=\mu(h_i)\mid_{t=-s}\in K[X]_{f_r}^{\Ga}
$$
for $i=1,2,\dots,m$.
For every $i$, choose a natural number $k_i$ such that $u_i:=f_r^{k_i}g_i\in
K[X]$.
\begin{lemma}\label{7lGa}
We have
$$
K[X]^{\Ga}_{f_r}=K[g_1,\dots,g_m,1/f_r]=K[u_1,\dots,u_m,1/f_r].
$$
\end{lemma}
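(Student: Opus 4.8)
The plan is to realize the operation $h \mapsto \mu(h)\mid_{t=-s}$ as a $K$-algebra \emph{retraction} of $K[X]_{f_r}$ onto $K[X]_{f_r}^{\Ga}$, and then read off the statement by applying this retraction to the obvious generators $h_1 \upto h_m, 1/f_r$ of $K[X]_{f_r}$.

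First I would dispose of the easy parts. Since $f_r \in K[X]^{\Ga}$, the element $1/f_r$ lies in $K[X]_{f_r}^{\Ga}$; by the preceding lemma each $g_i = \mu(h_i)\mid_{t=-s}$ lies in $K[X]_{f_r}^{\Ga}$, and hence $u_i = f_r^{k_i} g_i$ lies in $K[X]^{\Ga} \subseteq K[X]_{f_r}^{\Ga}$. Together with $g_i = u_i/f_r^{k_i}$ this already gives $K[u_1 \upto u_m,1/f_r] = K[g_1 \upto g_m,1/f_r] \subseteq K[X]_{f_r}^{\Ga}$, so the whole content of the lemma is the reverse inclusion $K[X]_{f_r}^{\Ga} \subseteq K[g_1 \upto g_m,1/f_r]$.

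Next I would set up the retraction. Because $f_r$ is invariant, $\mu$ extends to a $K$-algebra homomorphism $K[X]_{f_r} \to K[X]_{f_r}[t]$, and since $s = f_{r-1}/(r f_r)$ makes sense in $K[X]_{f_r}$ (this is where characteristic~$0$ enters, so that $r$ is invertible), composing with evaluation $t \mapsto -s$ yields a $K$-algebra homomorphism $\psi\colon K[X]_{f_r} \to K[X]_{f_r}$, $\psi(h) = \mu(h)\mid_{t=-s}$. By the preceding lemma, $\psi$ takes values in $K[X]_{f_r}^{\Ga}$. The crucial point is that $\psi$ restricts to the identity on $K[X]_{f_r}^{\Ga}$: if $h$ is $\Ga$-invariant, then writing $\mu(h) = h_0 + h_1 t + \cdots + h_l t^l$, the specialization $\tau = 0$ of~\eqref{eq31} gives $h = (-\sigma)\cdot h = h_0 + h_1 \sigma + \cdots + h_l \sigma^l$ for all $\sigma \in K$, while $h = h_0$ by~\eqref{3eqF0}; since $K$ is infinite and $K[X]_{f_r}$ is a domain, all of $h_1 \upto h_l$ vanish, so $\mu(h) = h$ and $\psi(h) = h$. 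Thus $\psi$ is a $K$-algebra retraction onto $K[X]_{f_r}^{\Ga}$, and in particular its image is exactly $K[X]_{f_r}^{\Ga}$.

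Finally I would combine these. Applying the $K$-algebra homomorphism $\psi$ to the presentation $K[X]_{f_r} = K[h_1 \upto h_m,1/f_r]$ shows that its image is $K[\psi(h_1) \upto \psi(h_m),\psi(1/f_r)] = K[g_1 \upto g_m,1/f_r]$, using $\psi(1/f_r) = 1/f_r$. Comparing with the previous paragraph's identification of the image gives $K[X]_{f_r}^{\Ga} = K[g_1 \upto g_m,1/f_r] = K[u_1 \upto u_m,1/f_r]$, as desired. I do not anticipate a genuine obstacle; the only points requiring a little care are the legitimacy of extending $\mu$ and defining $\psi$ on the localization (which rests on $f_r$ being invariant and on $r \neq 0$ in $K$) and the coefficient-vanishing step, which uses that the ground field is infinite.
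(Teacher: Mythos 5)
Your proof is correct and follows essentially the same route as the paper: the paper also defines the map $\gamma(g)=\mu(g)\mid_{t=-s}$ on $K[X]_{f_r}$, observes it is a retraction onto $K[X]_{f_r}^{\Ga}$ (being the identity on invariants), and reads off the generators $g_i=\gamma(h_i)$ and $1/f_r=\gamma(1/f_r)$. Your write-up merely supplies extra detail (well-definedness of the extension of $\mu$, and the coefficient-vanishing argument showing $\mu(h)=h$ for invariant $h$) that the paper leaves implicit.
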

\begin{proof}
  Define the ring homomorphism $\gamma:K[X]_{f_r}\to K[X]_{f_r}^{\Ga}$
  by $\gamma(g)= \linebreak \mu(g)\mid_{t=-s}$.  The homomorphism
  $\gamma$ is surjective, because $\gamma(g)=\mu(g)\mid_{t=-s}=g$ for
  all $g\in K[X]_{f_r}^{\Ga}$.  Since $K[X]_{f_r}$ is generated by
  $h_1,\dots,h_m,1/f_r$, $K[X]^{\Ga}_{f_r}$ is generated by
  $\gamma(h_1)=g_1,\dots,\gamma(h_m)=g_m,\gamma(1/f_r)=1/f_r$.
\end{proof}
From Lemma~\ref{7lGa} it follows that
$$
K[X]^{\Ga}=(K[u_1,\dots,u_m,f_r]:(f_r)^\infty)_{K[X]}
$$
Now generators of $K[X]^{\Ga}$ can be computed using Algorithm~\ref{alg4.8}.

\subsubsection{Arbitrary characteristic.} \label{3sGap}
Let us now no longer assume that $K$ has characteristic 0. Van den Essen's
algorithm may not work because $r$ may be divisible by the characteristic
of $K$ for every possible choice of $f$, as the following example shows.
\begin{example}
Suppose that $K$ is a field of characteristic 2 and define an
action of the additive group on $K[x,y]$ by
$$
\mu(x)=x+ty+t^2, \mu(y)=y
$$
For every element $f\in K[x,y]$, $\mu(f)$ is a polynomial
of even degree in $t$.
\end{example}

Let $X$ be an irreducible affine variety on which $\Ga$ acts regularly and non-trivially.
Choose again $f\in K[X]$ such that $\mu(f)\neq f$. Again we can write
$$
F(t):=\mu(f)=f_0+f_1t+\cdots+f_rt^r
$$
with $r>0$ and $f_r\neq 0$.
\begin{lemma}\label{7lfg}
If $f_r=1$, then $K[X]^{\Ga}$ is finitely generated.
\end{lemma}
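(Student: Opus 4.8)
The plan is to reduce to a situation where Van den Essen's slice-trick applies directly, using the normalization $f_r = 1$ to construct an honest ``slice'' even in positive characteristic. Recall that $F(t) = \mu(f) = f_0 + f_1 t + \dots + f_{r-1}t^{r-1} + t^r$, and from \eqref{eq31} (comparing coefficients of $\sigma^r$) we already know $f_r = 1 \in K[X]^{\Ga}$ trivially, but more importantly the lower coefficients transform in a controlled way. The key point is that when the leading coefficient is a unit (here, $1$), the element $f$ itself behaves like a polynomial of degree exactly $r$ in a slice coordinate, so we can localize at $1$ — i.e., not localize at all — and still run the argument. First I would establish that $K[X]$ is a free module of rank $r$ over the subring $B := K[X]^{\Ga}[f]$, with basis $1, f, \dots, f^{r-1}$; this is the positive-characteristic analogue of the fact that $\Ga$-actions with a slice give a polynomial ring over the invariants. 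The freeness should follow by showing $\mu$ realizes $K[X]$ as a subring of $K[X]^{\Ga}[t]$ in which $f$ maps to a monic degree-$r$ polynomial, together with a dimension/transcendence-degree count forcing the inclusion $B \subseteq K[X]$ to be finite.

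Next I would invoke a standard structural consequence: if $R \subseteq K[X]$ is a subring over which $K[X]$ is finite (module-finite), and $R$ is finitely generated, then so is $K[X]^{\Ga}$, because $K[X]^{\Ga}$ is a submodule of a Noetherian module over a Noetherian ring. Concretely, I would argue that $K[X]^{\Ga} = B \cap K(X)^{\Ga}$ sits inside the finitely generated algebra $K[X]$ as a subalgebra over which $K[X]$ is finite; by a lemma of Eakin–Nagata type (a ring finite over a Noetherian subring which is itself module-finite forces finite generation back down) one concludes $K[X]^{\Ga}$ is finitely generated. Alternatively, and more in the spirit of the paper, I would produce an explicit $\Ga$-equivariant retraction: since $f$ has monic degree-$r$ ``Taylor expansion'' under $\mu$, the element $f$ generates a genuine slice, and the map $h \mapsto$ (coefficient extraction of $\mu(h)$ evaluated at a root of the minimal equation of $f$) gives a Reynolds-type operator $K[X] \to K[X]^{\Ga}$ defined over $K[X]$ itself, not merely over a localization. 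This would show $K[X]^{\Ga}$ is a quotient-like image and in particular is generated by the images of generators of $K[X]$.

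The main obstacle I anticipate is the characteristic-$p$ subtlety in actually exhibiting the slice: in characteristic $0$ one sets $s = f_{r-1}/(r f_r)$ and uses $\tau \cdot s = s - \tau$, but when $p \mid r$ this fails, and the condition $f_r = 1$ does not by itself restore a linear coordinate on which $\Ga$ acts by translation. The right replacement is to work with $f$ directly rather than with a linearized $s$: the hypothesis $f_r = 1$ ensures that $f$ satisfies $\mu(f) - f^{\,\cdot} = $ (something nilpotent-like in $t$ of lower degree, with unit leading term), and one should extract invariants by a determinant/resultant construction (the ``Dixmier map'' or a characteristic-free version of the slice) applied to the monic polynomial $F(t) - f$. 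Making this retraction well-defined over $K[X]$ (not a localization) and $\Ga$-equivariant is the technical heart; once it is in place, finite generation of $K[X]^{\Ga}$ follows formally from finite generation of $K[X]$. I would structure the proof so that this retraction is constructed first as a lemma, then Lemma~\ref{7lfg} is a one-line corollary.
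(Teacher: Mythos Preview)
Your proposal has a genuine gap: the freeness claim at the heart of your first approach is false in positive characteristic. Take $K[X]=K[x]$ in characteristic~$p$ with $\mu(x)=x+t^p$. Then $f=x$, $r=p$, $f_r=1$, but $K[X]^{\Ga}=K$, so $K[X]$ is certainly not free of rank~$p$ over $K[X]^{\Ga}$ (it is infinite-dimensional), and $B=K[X]^{\Ga}[f]=K[x]=K[X]$ makes the basis $1,f,\dots,f^{r-1}$ nonsensical. More structurally, your Artin--Tate/Eakin--Nagata plan is circular: you would need $B=K[X]^{\Ga}[f]$ to be finitely generated \emph{before} knowing $K[X]^{\Ga}$ is, and you cannot pass to $K[X]^{\Ga}$ as a sub-$B$-module since $f\notin K[X]^{\Ga}$ so $K[X]^{\Ga}$ is not a $B$-submodule of $K[X]$. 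The retraction idea runs into the same characteristic obstruction you yourself flag: there is no Reynolds operator for $\Ga$ in characteristic~$p$, and a trace extracted from a resultant would need to be divided by~$r$.

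The paper's argument avoids all of this by reversing the direction of the embedding. Instead of sitting $K[X]^{\Ga}$ below $K[X]$, it shows that the composite $K[X]^{\Ga}\hookrightarrow K[X]\twoheadrightarrow K[X]/(f)$ is \emph{injective} (using that $F(t)$ is monic: if an invariant $g=hf$ then $g=\mu(h)F(t)$ forces $g=0$ by degree in~$t$). Then for each $u\in K[X]$ the resultant $P(s)=\operatorname{Res}_t(\mu(u)-s,F(t))$ is monic in~$s$ (again because $F$ is monic), has coefficients in $K[X]^{\Ga}$ (by the translation-invariance of resultants), and kills $u\bmod(f)$. So $K[X]/(f)$ is integral, hence module-finite, over a finitely generated subalgebra $R\subseteq K[X]^{\Ga}$ built from finitely many such resultant coefficients. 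Now Artin--Tate applies in the right direction: $K[X]^{\Ga}$ is a sub-$R$-module of the Noetherian $R$-module $K[X]/(f)$, hence finitely generated. Your instinct to use a resultant was correct; the missing idea is to pass to $K[X]/(f)$ rather than trying to make $K[X]$ itself finite over anything.
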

\begin{proof}
  Suppose an invariant $g \in K[X]^{\Ga}$ is mapped to zero by the
  canonical map $\map{\pi_f}{K[X]^{\Ga}}{K[X]/(f)}$. Then $g = h f$
  with $h \in K[X]$, so
  \[
  g = \mu(g) = \mu(h) F(t)
  \]
  This implies $g = 0$, since otherwise the degrees of both sides of
  the above equation would differ. It follows that $\pi_f$
%The homomorphism $\mu:K[X]\to K[X][t]$ induces a homomorphism
%$$\overline{\mu}:K[X]/(f)\to K[X][t]/(F(t)).$$ 
%If $\pi_f:K[X]\to K[X]/(f)$
%and $\pi_F:K[X][t]\to K[X][t]/(F(t))$ are the quotient homomorphisms,
%then $\overline{\mu}\circ \pi_f=\pi_F\circ \mu$.
%The restriction of $\mu$ to $K[X]^{\Ga}$ is the identity, so the
%restriction of $\pi_F\circ \mu=\overline{\mu}\circ\pi_f$ to $K[X]^{\Ga}$ is injective. 
%It follows that the restriction of $\pi_f$
induces an inclusion $K[X]^{\Ga}\to K[X]/(f)$.

We claim that $K[X]/(f)$ is integral over $K[X]^{\Ga}$.
Suppose that $u\in K[X]$ and let $U(t)=\mu(u)\in K[X][t]$. Define $P(s)\in K[X][s]$
as the resultant
$$
P(s)=\Res_t(U(t)-s,F(t)).
$$
Since $F(t)$ is monic, it is clear from the definition of
the resultant as the determinant of the Sylvester matrix (see \mycite[IV, \S8]{Lang}) that
either $P(s)$ or $-P(s)$ is monic as well. 

Consider the action of $\Ga$ on $K[X][t,s]$, where $\Ga$ acts trivially on the variables $t,s$.
If $\sigma\in \Ga$, then 
$\sigma\cdot U(t)=U(t-\sigma)$ by \eqref{eq31}, and similarly
$\sigma\cdot F(t)=F(t-\sigma)$. Therefore
$$
\sigma\cdot P(s)=\Res_{t}(U(t-\sigma)-s,F(t-\sigma))=\Res_t(U(t)-s,F(t))=P(s)
$$
using \mycite[Proposition 8.3]{Lang}. It follows that
 all coefficients of $P(s)$ lie in $K[X]^{\Ga}$.

There exist polynomials
$A(t,s),B(t,s)\in K[X][t,s]$ such that
$$
P(s)=A(t,s)(U(t)-s)+B(t,s)F(t)
$$
(see \mycite[discussion before IV, Proposition 8.1]{Lang}). If we
substitute $t = 0$ and $s = u$, we get
\[
P(u) = A(0,u) (U(0) - u) + B(0,u) F(0) = B(0,u) f,
\]
where the last equality follows from~\eqref{3eqF0}.
%If we substitute $s=U(t)$, then
%we get
%$$
%P(U(t))\in (F(t))\subseteq K[X][t].
%$$
%So we have
%$$0=\pi_F(\mu(P(u)))=\overline{\mu}(\pi_f(P(u)))=\overline{\mu}(P(u+(f))).$$
Therefore $P(u+(f))=0$ in $K[X]/(f)$, so $u+(f)$ is integral over $K[X]^{\Ga}$. 
The monic polynomial among $P(x),-P(x)$ is the {\it characteristic polynomial}
of $u+(f)$ over $K[X]^{\Ga}$.
Since $u$ was arbitrary,
$K[X]/(f)$ is integral over $K[X]^{\Ga}$.

Suppose that $h_1,\dots,h_m$ are generators of $K[X]$.
Let $R\subseteq K[X]^{\Ga}$ be the subalgebra generated
by the coefficients of the characteristic polynomials
of $h_i+(f)\in K[X]/(f)$
% $h_i+(f)\in K[X]/(f)\subseteq K[X]/(f)[t]\cong K[X][t]/(F(t))$
over $K[X]^{\Ga}$ for $i=1,2,\dots,m$. 
We have $R\subseteq K[X]^{\Ga}\subseteq K[X]/(f)$
and $R$ is clearly finitely generated.
Since $K[X]/(f)$ is finitely generated and integral over $R$,
we have that $K[X]/(f)$ is a finite $R$ module.
Since $K[X]^{\Ga}$ is a sub-$R$-module of $K[X]/(f)$, it
is finitely generated as an $R$-module as well.
But then $K[X]^{\Ga}$ is also finitely generated as an algebra.
\end{proof}

If $f_r=1$ and $X$ is normal, then generators of $K[X]^{\Ga}$ can be computed
as follows. By \lref{3lConnected}, $K[X]^{\Ga}$ is the integral closure of $R$ in $K[X]$, where
$R$ is as in the proof of Lemma~\ref{7lfg}.
This integral closure can be computed as described in
Algorithm~\ref{3aIC}. If $f_r = 1$ but $X$ is not normal,
\rref{1rNonNormal} may be applied to compute the integral closure.

 Let us now consider the general case where $f_r$ need not be~1 and $X$
 need not be normal (but is still assumed to be irreducible). Let ${\mathfrak s}\subseteq K[X]$
 be the vanishing ideal of the singular locus. This ideal is non-zero
 and stable under 
 the action of $\Ga$. Without loss of generality, we could have
 chosen $f\in {\mathfrak s}$ such that $\mu(f)\neq f$.
 We write 
 $$
 F(t)=\mu(f)=f_0+f_1t+\cdots+f_rt^r
 $$
 with $f_r\neq 0$.
 Choose distinct $\lambda_0,\lambda_1,\dots,\lambda_r\in K$.
 Using that the Vandermonde matrix is invertible, we see that $f_0,f_1,\dots,f_r$ lie in the $K$-linear span of
 $F(\lambda_0),F(\lambda_1),\dots,F(\lambda_r)$.
 We have $F(\lambda_0),\dots,F(\lambda_r)\in {\mathfrak s}$
 because ${\mathfrak s}$ is $\Ga$-stable. This implies that $f_r\in {\mathfrak s}$. So $f_r$ vanishes on the set of singularities,
 and $K[X]_{f_r}$ is smooth.
   We have
$$
\mu(f/f_r)=(f_0/f_r)+(f_1/f_r)t+\cdots+(f_{r-1}/f_r)t^{r-1}+t^{r}.
$$
Using the previous discussion we can compute generators 
of $K[X]_{f_r}^{\Ga}$. Of course there is no need to choose~$f$ to lie
in $\mathfrak s$ if we apply \rref{1rNonNormal} to compute the
integral closure. Suppose that
$$
K[X]_{f_r}^{\Ga}=K[g_1,\dots,g_l]
$$
For every $i$ we can compute a nonnegative integer $k_i$ such
that $u_i:=f_r^{k_i}g_i\in K[X]$.
We then have
$$
K[X]^{\Ga}=(K[u_1,\dots,u_l,f_r]:(f_r)^\infty)_{K[X]}.
$$
Now generators of $K[X]^{\Ga}$ can be computed using
Algorithm~\ref{alg4.8}.

\subsection{Invariants of connected unipotent groups}\label{7sunipotent}

Suppose that $X$ is an irreducible affine variety on which the
additive group $\Ga$ acts regularly. We have already seen that there
exists an algorithm that computes generators for a subalgebra
$R\subseteq S:=K[X]$ and generators of an ideal ${\mathfrak a}$ such
that $S^{\Ga}=(R:{\mathfrak a}^\infty)_S$. We now will deal with the
more general case where a connected unipotent group $N$ acts regularly
on $X$. A unipotent group $N$ is nilpotent (see
\mycite[Corollary~17.5]{Humphreys}) and therefore solvable. If
moreover $N$ is connected, then by [\citenumber{Humphreys},
Theorem~19.3] there exists a descending chain of normal subgroups
$$
N=N_k\supset N_{k-1}\supset N_{k-2}\supset \cdots\supset N_0=(0).
$$
such that each quotient $N_i/N_{i-1}$ has dimension one. By
[\citenumber{Humphreys}, Theorem~15.3(c)], each quotient is again
unipotent, and therefore it is isomorphic to the additive group $\Ga$
by [\citenumber{Humphreys}, Theorem~20.5].
%A connected unipotent group $N$ has a descending chain of normal subgroups
%\m{I wasn't aware that the $N_i$ can be chosen to be normal in $N$. I
%  think we should give a reference. {\bf Is this reference
%    sufficient?} I think not. This doesn't speak of $\Ga$'s, and
%  refining might destroy normality.}
%$$
%N=N_k\supset N_{k-1}\supset N_{k-2}\supset \cdots\supset N_0=(0).
%$$
%such that each quotient $N_i/N_{i-1}$ is isomorphic to the additive group $\Ga$ (see \mycite[I.2, Corollary 4]{Borel} and refine the chain if necessary).
This allows us to give a recursive approach to the computation of generators
of $K[X]^N$. The tricky part here is that $K[X]^{N_i}$ may not be finitely generated for some $i$, 
even if $K[X]^N$ is finitely generated.
\begin{alg} \label{7aunipotent}\ 

\begin{description}
\item{\bf Input:} The affine variety $X$ (given by its coordinate ring $S:=K[X]$), the connected unipotent group $N$
with its group structure (multiplication $N\times N\to N$ and inverse $N\to N$
and the identity element $e\in N$), the action $N\times X\to X$, and
a descending chain of normal subgroups
$$
N=N_{k}\supset N_{k-1}\supset \cdots\supset N_1\supset N_0=(0)
$$
with explicit isomorphisms $N_{i}/N_{i-1}\cong \Ga$ for $i=1,2,\dots,k$.
\item{\bf Output:} A subalgebra $T\subseteq K[X]$ and an ideal ${\mathfrak d}\subseteq T$
such that $K[X]^N=(T:{\mathfrak d}^\infty)_{K[X]}$.
 \end{description}
 \begin{enumerate}  
 \renewcommand{\theenumi}{\arabic{enumi}} 
 \item If $N=(0)$ (and $k=0$), then terminate with as output the algebra $S$ and its ideal $S$.
 \item Find a finitely generated subalgebra $R\subseteq S:=K[X]$ and an ideal ${\mathfrak a}$
such that $S^{N_1}=(R:{\mathfrak a}^\infty)_S$ as in Section~\ref{7sadditive}.
Say $R=K[f_1,\dots,f_r]$ and ${\mathfrak a}=(h_1,\dots,h_s)$.
\item Let $R'$ be the algebra generated by all $u\cdot f_i$ where $u\in N$ and $i=1,2,\dots,r$.
\item Let ${\mathfrak a}'$ be the ideal of $R'$ generated by all $u\cdot h_j$ where $u\in N$ and 
$j=1,2,\dots,s$.
\item Invoke this algorithm with input $R'$ and $N':=N/N_1$ to find a subalgebra 
$T\subseteq (R')^{N'}$
and an ideal ${\mathfrak c}$ of $T$ such that $(T:{\mathfrak c}^\infty)_{R'}=(R')^{N'}$.
\item Find a nonzero  element $a$ in $({\mathfrak a}')^{N'}={\mathfrak a}'\cap (R')^{N'}$.
Replace $T$ by $T[a]$ to ensure that ${\mathfrak a}'\cap T$ is not the zero ideal.
\item Output the algebra $T$ and the ideal ${\mathfrak d}:={\mathfrak c}({\mathfrak a'}\cap T)$.
\end{enumerate}
\end{alg}
Before we prove the correctness of this algorithm, we explain some of the steps in
more detail. 

In step (3), since $N_1$ is normal in $N$, $S^{N_1}$ is stable under $N$
and $R'\subseteq S^{N_1}$.

In step (6): Note that $N'$ is unipotent and ${\mathfrak a}'$ is nonzero.
We can find a nonzero finite dimensional subrepresentation $W\subseteq {\mathfrak a}'$
because $N'$ acts regularly on the infinite dimensional vector space ${\mathfrak a}'$.
But then $W^{N'}$ is nonzero. This shows that $({\mathfrak a}')^{N'}$ is nonzero.
A nonzero element in $({\mathfrak a}')^{N'}$ can be found using linear algebra
and exhaustive search.
\begin{proof}[Proof of correctness of Algorithm~\ref{7aunipotent}]
We need to show that
$$
S^N=(T:{\mathfrak d}^\infty)_S.
$$
We have 
$$
S^{N_1}=(R:{\mathfrak a}^\infty)_S.
$$
We claim that we also have
$$
S^{N_1}=(R':({\mathfrak a}')^\infty)_S.
$$
Suppose that $f\in S^{N_1}$.
Since $N$ is a normal subgroup, $S^{N_1}$ is $N$-stable. 
Let $W$ be the vector space spanned
by all $u\cdot f$, $u\in N$. Then $W$ is finite dimensional and contained
in $S^{N_1}=(R:{\mathfrak a}^\infty)_S$.
Then there exists a positive integer $l$ such that
$$
{\mathfrak a}^lW\subseteq R
$$
So in particular,
$$
{\mathfrak a}^l(u^{-1}\cdot f)\subseteq R.
$$
for all $u\in N$.
Applying $u$ gives
$$
(u\cdot {\mathfrak a})^l f\subseteq u\cdot R\subseteq R'.
$$
Since ${\mathfrak a}$ is finitely generated, there exists
finitely many elements $u_1,\dots,u_m$ such that
${\mathfrak a}'$ is generated by $u_i\cdot {\mathfrak a}$, $i=1,2,\dots,m$.

Since
$$
({\mathfrak a}')^{lm}=(u_1\cdot {\mathfrak a}+u_2\cdot {\mathfrak a}+\cdots
+u_{m}\cdot {\mathfrak a})^{lm}\subseteq u_1\cdot {\mathfrak a}^l+\cdots+u_m\cdot {\mathfrak a}^l
$$
we get
$$
({\mathfrak a}')^{lm}f\subseteq R'
$$
and 
$$
f\in (R':({\mathfrak a}')^{\infty})_S.
$$

Conversely, if $f\in (R':({\mathfrak a}')^{\infty})_S$, then $f$ is invariant under $N_1$
because $R'\subseteq S^{N_1}$ and ${\mathfrak a}'\subseteq S^{N_1}$ is not equal
to $(0)$.

Next we will show that
$$
S^N=(T:{\mathfrak d}^\infty)_S
$$
where
$$
{\mathfrak d}={\mathfrak c}({\mathfrak a}'\cap T).
$$

Suppose that $f\in S^N$. Then $f\in S^{N_1}=(R':({\mathfrak a}')^\infty)_S$, so there
exists a positive integer $l$ such that
$$
({\mathfrak a}')^l f\subseteq R'.
$$
It follows that
$$
({\mathfrak a}'\cap T)^l f\subseteq (R')^N=(R')^{N'}
$$
Since ${\mathfrak a}'\cap T$ is finitely generated, there exists a positive integer $m$
such that
$$
{\mathfrak c}^m({\mathfrak a}'\cap T)^l f\subseteq T.
$$
This shows that ${\mathfrak d}^nf\subseteq T$ for $n\geq \max\{l,m\}$ and
therefore $f\in (T:{\mathfrak d}^\infty)_S$.
It follows that
$$
S^N\subseteq (T:{\mathfrak d}^\infty)_S.
$$
The reverse inclusion
$$
S^N\supseteq (T:{\mathfrak d}^\infty)_S
$$
follows because $T,{\mathfrak d}\subseteq S^N$ and ${\mathfrak d}\neq (0)$.

\end{proof}
Finally we consider the case where $N$ is a connected unipotent group
acting regularly on an irreducible {\it factorial\/} variety $X$. In this case
we can effectively find a quasi-affine variety $U$ such that $K[X]^G=K[U]$.

\begin{alg}\label{7afactorial}\  

\begin{description}
\item{\bf Input:} The irreducible affine factorial variety $X$,
a connected unipotent group $N$ and a regular action $N\times X\to X$.

\item{\bf Output:} A finitely generated
subalgebra $\widetilde{R}\subseteq K[X]$ and an ideal ${\mathfrak g}\subseteq \widetilde{R}$
such that
$$K[X]^N=(\widetilde{R}:{\mathfrak g}^\infty)_{Q(\widetilde{R})}.$$
 \end{description}
 \begin{enumerate}  
 \renewcommand{\theenumi}{\arabic{enumi}} 
 \item Find a finitely generated subalgebra $R\subseteq K[X]$ and
 an ideal ${\mathfrak a}$ of $R$ such that 
 $$K[X]^N=(R:{\mathfrak a}^\infty)_{K[X]}$$
 using Algorithm~\ref{7aunipotent}. 
 %Suppose that $K[X]$ and $R$  are
 %presented as follows. $K[X]\cong K[x_1,\dots,x_n]/I$ for some ideal $I$ and  $R$ is the subalgebra generated by $f_i+I$, $i=1,2,\dots,r$.
   \item Apply \aref{6afgli2} to find $\widetilde{R}$ and $\widetilde{g}$
   such that $(\widetilde{R}:\widetilde{\mathfrak g}^\infty)_{Q(\widetilde{R})}=K[X]^N$.
   \end{enumerate}
   \end{alg}
 \begin{proof}[Proof of correctness of Algorithm~\ref{7afactorial}]
 We need to show that \aref{6afgli2} applies here, i.e., 
 we have to prove that the variety corresponding to ${\mathfrak
   g}K[X]$ is equal to $K[X]$ or has codimension $\geq 2$.
 Suppose not. We can write $\sqrt{{\mathfrak g}K[X]}$ as the intersection
 of finitely many distinct prime ideals. One of these prime ideals has height $1$, say ${\mathfrak p}$ is
 such a prime ideal. Since $N$ is connected, ${\mathfrak p}$ must be stable under $N$.
 Since $K[X]$ is factorial, ${\mathfrak p}$ is a principal ideal, say ${\mathfrak p}=(h)$.
% The ideal \m{What if $f$? Where is there an ideal?} $f$ is principal.
 Since $N$ is unipotent, it follows that $h$ is invariant under $N$, so $h\in K[X]^N$.
 
 We have already seen that $K[X]^N$ is isomorphic to the ring of regular functions
 on some quasi-affine variety $U$. There exists a finitely generated
 subalgebra $S$ of $K[X]^N$
 and an ideal ${\mathfrak b}$ of $S$ such that 
 $$
 K[X]^N=(S:{\mathfrak b}^\infty)_{Q(S)}.
 $$
 Clearly ${\mathfrak b}\subset {\mathfrak g}$ since $K[X]^N_f$ is finitely
 generated for all $f\in {\mathfrak b}$.
 Therefore ${\mathfrak b}\subseteq {\mathfrak g}K[X]\subseteq hK[X]$. 
 It follows that $h^{-1}{\mathfrak b}\subseteq K[X]$
 and $h^{-1}{\mathfrak b}\subseteq K[X]^N$. 
 This shows that $h^{-1}\in (S:{\mathfrak b}^\infty)_{Q(S)}=K[X]^N$.
 But $h^{-1}\not\in K[X]$, so $h^{-1}\not\in K[X]^N$. Contradiction.
 
 We have shown that the variety corresponding to ${\mathfrak g}K[X]$ has codimension $\geq 2$.
    \end{proof}
 \subsection{Invariants of arbitrary algebraic groups} \label{3sAlg}
 If $G$ is an arbitrary algebraic group, then there exists a connected unipotent
 normal subgroup $N$ such that $G/N$ is reductive. Suppose that $G$ acts on an
 irreducible affine variety $X$. One approach to compute generators of $K[X]^G$
 is by computing generators of $K[X]^N$ first. The problem of this is that $K[X]^N$ may
 not be finitely generated, even if $K[X]^G$ is finitely generated. 
 If $K[X]^N$ is finitely generated, then Algorithm~\ref{7aunipotent} can be used
 to compute a finitely generated subalgebra $R$ of $K[X]^N$ and an ideal ${\mathfrak a}$
 of $R$ such that $K[X]^N=(R:{\mathfrak a}^\infty)_{K[X]}$. Then Algorithm~\ref{alg4.8}
 can be used to find generators of $K[X]^N$. Finally Algorithm~\ref{2aInvarAff} can be
 used to compute generators of $K[X]^G=(K[X]^N)^{G/N}$ because $G/N$ is reductive.
 
 Even if $K[X]^N$ is not finitely generated, we might be able to compute generators
 of $K[X]^G$. Suppose that we have found $R$ and ${\mathfrak a}$
 such that $K[X]^N=(R:{\mathfrak a}^\infty)_{K[X]}$ using Algorithm~\ref{7aunipotent}. 
 Assume that $R=K[f_1,\dots,f_r]$ and ${\mathfrak a}=(h_1,\dots,h_s)$.
 We could try to copy the approach in Section~\ref{7sunipotent}. So
 let $R'$ be the algebra generated by $\sigma\cdot f_i$ with $\sigma\in G$ and $i=1,2,\dots,r$
 and let ${\mathfrak a}'$ be the ideal generated by all $\sigma\cdot h_j$ with
 $\sigma\in G$ and $j=1,2,\dots,s$. Similarly as in the proof of Algorithm~\ref{7aunipotent}
 we can show that
 $$
 K[X]^G = (R':({\mathfrak a}')^{\infty})_{K[X]}
 $$
 If $({\mathfrak a}')^{G/N}$ is not equal to the zero ideal, then one can show that
\begin{equation}\label{7eGinv}
 K[X]^G=((R')^{G/N}:(({\mathfrak a}')^{G/N})^{\infty})_{K[X]}.
 \end{equation}
 Generators of $(R')^{G/N}$ can be computed using Algorithm~\ref{2aInvarAff}.
 Generators of $({\mathfrak a}')^{G/N}={\mathfrak a}'\cap (R')^{G/N}$ can be computed by
 using the usual Gr\"obner basis techniques. 
 Finally generators of $K[X]^G$ can be found using (\ref{7eGinv}) and
 Algorithm~\ref{alg4.8}.  Of course this algorithm will not terminate, unless
 $K[X]^G$ is finitely generated.
 
 So what can we do if $({\mathfrak a}')^{G/N}$ is zero?
 Perhaps the choice of $R'$ and ${\mathfrak a}'$ were unfortunate. 
 Suppose that there exists an element $f\in K[X]^G$ such that $(K[X]^N)_f$
 is finitely generated. Then $f\in {\mathfrak g}$ where ${\mathfrak g}$
 is the finite generation locus ideal of $K[X]^N$. 
 Using Algorithm~\ref{6afgli} we can construct subalgebras
 $$
 \widetilde{R}_1\subseteq \widetilde{R}_2\subseteq \cdots
 $$
 and ideals
 $$
 {\mathfrak g}_1\subseteq {\mathfrak g}_2\subseteq \cdots
 $$
 such that $\bigcup \widetilde{R}_i=K[X]^N$ and ${\mathfrak g}=\bigcup_i{\mathfrak g}_i$.
 So we have $f\in {\mathfrak g}_i$ for some $i$.
 We terminate Algorithm~\ref{6afgli} at step $i$ when $f\in {\mathfrak g}_i$.
 We have
 $$
 K[X]^N=(R:{\mathfrak a}^\infty)_{K[X]}=(\widetilde{R}_i:{\mathfrak g}_i^\infty)_{K[X]}.
 $$
 So we might as well replace $R$ by $R=\widetilde{R}_i$ and ${\mathfrak a}$
 by ${\mathfrak a}={\mathfrak g}_i$.
 We  then still have
 $$
 K[X]^{N}=(R:{\mathfrak a}^\infty)_{K[X]}
 $$
 but we also have $f\in {\mathfrak a}^{G/N}$, so ${\mathfrak a}^{G/N}$ is not the zero ideal.
 We can proceed to compute generators of the invariant ring $K[X]^G$ as discussed before.
 
We just saw that there exists an algorithm to compute generators
of $K[X]^G$ if there exists a nonzero 
element $f\in K[X]^G$ such that $K[X]^N_f$ is finitely generated.
This may not always be the case as the following example shows.
\begin{example}
Let $H$ be the group and $X$ be the representation
in Nagata's counterexample to Hilbert's fourteenth problem (see \mycite{nag:a}).
Here $V$ is a $32$-dimensional representation and $H$ is an algebraic group over
the base field $K={\mathbb C}$ and $K[X]^H$ is not finitely generated.
Let $N$ be the unipotent radical of $H$. Then $N$ is a connected unipotent group,
$H/N$ is reductive, and $K[X]^N$ is not finitely generated, because
otherwise $K[X]^H=(K[X]^N)^{H/N}$ would be finitely generated.
Let ${\mathcal G}_m={\mathbb C}^\star$ be the multiplicative group acting by scalar multiplication,
and let $G={\mathcal G}_m N$. Then $N$ is the unipotent radical of $G$. 
Since $K[X]^G=K$, for every nonzero $f\in K[V]^G$ we have
$K[X]^N_f=K[X]^N$ is not finitely generated.
\end{example}

%\begin{problem}
%Suppose that $G$ is an algebraic group and $X$ is an irreducible
%$G$-variety. 
%\m{To me the answer seems to be ``no''. Let N be
%  connected and unipotent such that $K[V]^N$ is not f.g., let $T$ be
%  the set of non-zero scalar matrices and $G = T N$. Then $K[V]^G =
%  K$, so always $K[V]^N_f = K[V]^N$. {\bf Good point. I made it an example}}
%Does there always exist a nonzero element $f\in K[X]^G$
%such that $K[X]^N_f$ is finitely generated, where $N$ is the unipotent radical of $G$?
%\end{problem}

Suppose that $G$ is an algebraic  group and $X$ is an irreducible normal $G$-variety.
Suppose that the quotient field $Q(K[X]^G)$ of the invariant ring $K[X]^G$
is equal to the field of invariant rational functions on $X$, denoted by $K(X)^G$.
First we can find the transcendence degree of $K(X)^G:K$ as follows. 
Let $n=\dim X$ and let $m$ be the dimension of a generic $G$-orbit in $X$.
  Then the transcendence degree of $K(X):K(X)^G$ is $m$,
  and the transcendence degree of $K(X)^G:K$ is $n-m$.
  If we consider the morphism
  $$
  \psi:G\times X\to X\times X
  $$
  defined by
  $$
  \psi(\sigma,x)=(x,\sigma\cdot x)
  $$
  then the dimension of the closure of the image is $n+m$. Using Gr\"obner basis
  techniques one can compute the dimension of the Zariski closure
  of the image of $\psi$, and hence determine $m$.
  Using exhaustive search and linear algebra, one can compute 
  a linear basis of invariants $f_1,f_2,\dots\in K[X]^G$.
  Terminate this exhaustive search if one finds among these invariants
  $n-m$ algebraically independent functions. Let us call them $h_1,\dots,h_{n-m}$.
  Let $L$ be the field generated by $h_1,\dots,h_{n-m}$. Then $K(X)^G:L$ is an
  algebraic extension. Let $R$ be the integral closure of $K[h_1,\dots,h_{n-m}]$
  in $K[X]$. Generators of $R$ can be computed using Algorithm~\ref{3aIC}.
  We have $Q(R)=Q(K[X]^G)$. It follows that
  $$
  K[X]^G=K(X)^G\cap K[X]=Q(R)\cap K[X].
  $$
  So we can use Algorithm~\ref{5aHilb14} to find generators of $K[X]^G$.
  
\addcontentsline{toc}{section}{References}

\bibliographystyle{mybibstyle} \bibliography{bib}

\bigskip

\begin{center}
\begin{tabular}{ll}
  Harm Derksen & Gregor Kemper \\
  Department of Mathematics \hspace{10mm} & Technische Universit\"at
  M\"unchen \\
  University of Michigan & Zentrum Mathematik - M11 \\
  East Hall & Boltzmannstr. 3 \\
  530 Church Street & 85\,748 Garching \\
  Ann Arbor, MI 48109-1043 & Germany \\
  USA & \\
  {\tt hderksen$@$umich.edu} & {\tt
    kemper$@$ma.tum.de}
\end{tabular}
\end{center}

\end{document}